\documentclass[11pt,reqno,a4paper]{amsart}
\usepackage{amssymb,amscd,amsmath}
\usepackage{pspicture}
\usepackage{graphicx}

\usepackage{mathptmx}

\usepackage[matrix,arrow,curve,frame]{xy}    % XY-pic diagram pac

\xymatrixcolsep{1.9pc}                          % Adjust size of diagrams.
\xymatrixrowsep{1.9pc}
\newdir{ >}{{}*!/-5pt/\dir{>}}                  % Make better tailed arrows

%\scrollmode

\addtolength{\textwidth}{2.5cm} \calclayout

\makeatletter

\renewcommand{\subsection}{\@startsection{subsection}{1}{0pt}{-3.25ex plus -1ex minus-.2ex}{1.5ex plus.2ex}{\normalfont\it}}
\renewcommand{\section}{\@startsection{section}{1}{\parindent}{3.5ex plus 1ex minus .2ex}{2.3ex plus.2ex}{\sc}}

\renewcommand{\phi}{\varphi}
\renewcommand{\leq}{\leqslant}
\renewcommand{\geq}{\geqslant}
\renewcommand{\epsilon}{\varepsilon}

\renewcommand{\kappa}{\varkappa}

\DeclareMathOperator{\Ext}{Ext}

\DeclareMathOperator{\Hom}{Hom} 
 
 \DeclareMathOperator{\id}{id}

\DeclareMathOperator{\chr}{char} 
 
\DeclareMathOperator{\coker}{Coker} \DeclareMathOperator{\nis}{nis}

\newcommand{\cc}{\mathcal}
\newcommand{\bb}{\mathbb}

\newcommand{\op}{{\textrm{\rm op}}}

\newtheorem{thm}{Theorem}[section]
\newtheorem{prop}[thm]{Proposition}

\newtheorem{cor}[thm]{Corollary}
\newtheorem{lem}[thm]{Lemma}

\newtheorem{rem}[thm]{Remark}

\newtheorem{defs}[thm]{Definition}
\newtheorem{notn}[thm]{Notation}
\newtheorem{constr}[thm]{Construction}
\newtheorem{claim}[thm]{Claim}

\makeatother

\begin{document}

\footskip30pt

%\baselineskip=1.5\baselineskip

\title{Surjectivity of the \'{e}tale excision map for homotopy invariant framed presheaves}
%\author{Grigory Garkusha}
%\address{Department of Mathematics, Swansea University, Singleton Park, Swansea SA2 8PP, United Kingdom}
%\email{g.garkusha@swansea.ac.uk}
\author{Andrei Druzhinin} %\author{Ivan Panin}
\address{Chebyshev Laboratory, 14-line of Vasilievsky Irland, 29b %линия Васильевского острова, 29б,
St. Petersburg, Russia}
%\address{St. Petersburg Department of V.A.Steklov Institute of Mathematics of the Russian Academy of Sciences, 191023, 27 Fontanka, St. Petersburg, Russia}

\author{Ivan Panin}
\address{St. Petersburg Branch of V. A. Steklov Mathematical Institute,
Fontanka 27, 191023, St. Petersburg, Russia}

%\email{paniniv@gmail.com}

%\address{Institute for Advanced Study, Einstein Drive, Princeton, NJ, 08540, USA}

%\email{panin@math.ias.edu}

\thanks{The authors gratefully acknowledge excellent working conditions and support
provided by the RCN Frontier Research Group Project no. 250399 “Motivic Hopf equations" at University of Oslo.}

\thanks{The second author gratefully acknowledge support of the
RFBR grant 16-01-00750-a.}

%\thanks{The second author thanks the
%Institute for Advanced Study for the support and for the kind
%hospitality during his visit in the Second Term of 2014--2015.}

\begin{abstract}
The category of framed correspondences $Fr_*(k)$, framed presheaves
and framed shea\-ves were invented by Voevodsky in his unpublished
notes~\cite{Voe2}. Based on the notes~\cite{Voe2} a
new approach to the classical Morel--Voevodsky motivic
stable homotopy theory was developed by \\
G.Garkusha and I.Panin in \cite{GP3}.

The purpose of this paper is
to prove Theorem \ref{Few_On_P_w_Tr_Introduction}
stating that if the ground field $k$ is infinite, then the surjectivity of the
\'{e}tale excision property is true for any $\bb A^1$-invariant stable radditive
framed presheaf of Abelian groups $\cc F$. The injectivity of the \'{e}tale excision was proved in
\cite{GP4}. The surjectivity of the \'{e}tale excision was proved in \cite{GP4} if the ground field
is infinite of characteristic not 2. In this preprint the surjectivity of the \'{e}tale excision
is proved in the case of any infinite ground field.

%The significance of this result is this: 
{\it As explained in \cite[Intoduction]{GP3} all the results of \cite{GP4}, \cite{AGP}, \cite{GNP} and \cite{GP3} are true now
automatically without any restrictions on the characteristic of the ground field. 
%Their proofs run 
%literally in the same way as in the case of characteristic not 2.
}

%{\it Thus, the \'{e}tale excision property is true for any $\bb A^1$-invariant stable radditive
%framed presheaf of Abelian groups in the case of any infinite ground field.}

%%%the associated Nisnevich
%%%sheaf $\cc F_{\nis}$ is strictly $\bb A^1$-invariant and $\sigma$-stable whenever the base field
%%%$k$ is infinite perfect of characteristic different from 2.

%Point out that as
%%%The original Voevodsky theorem~\cite{Voe1} and all other known theorems of the same fashion
%as well as all other known results of the same shape
%as Theorem \ref{thm1}
%%%{\it are not suitable} for the big frame motives theory. Indeed,
%%%bigraded presheaves of $\bb A^1$-homotopy groups of a bispectra $E\in SH(k)$
%%%are naturally $Fr_*(k)$-presheaves (= framed presheaves), however they
%%%are in {\it a no reasonable way} presheaves
%%%with transfers as in the sense of \cite{Voe1}, so in the sense of other indicated works.
%Finally point out
%that the paper is inspired by Voevodsky's
%one~\cite{Voe1}, rather than used the machinery of standard triples.
%%%Inspite of
%The standard triple machinery of Voevodsky~\cite{Voe1} {\it does not work } in our case,
%however the paper \ is definitely inspired by the Voevodsky's one
%~\cite{Voe1}.
%since
%it works with divisors of rational functions.
%But we are forced to work systematically with divisors of regular functions.

%This result and the paper are inspired by Voevodsky's
%paper~\cite{Voe1}.
\end{abstract}

\keywords{Motivic homotopy theory, framed presheaves}

\subjclass[2010]{14F42, 14F05}

\maketitle

\thispagestyle{empty} \pagestyle{plain}

\newdir{ >}{{}*!/-6pt/@{>}} %this command is to define the arrow of the type \ar@{ >->} (spacing of arrows is needed sometimes)

\tableofcontents

\section{Introduction}
The category of framed correspondences $Fr_*(k)$, framed presheaves
and framed shea\-ves were invented by Voevodsky in his unpublished
notes~\cite{Voe2}. Based on the notes~\cite{Voe2} a
new approach to the classical Morel--Voevodsky motivic
stable homotopy theory was developed in \cite{GP3}.

%{\it The author is very greatful to Ivan Panin
%for very careful reading of the preliminary version of the manuscript.
%As a result the text was significantly improved.
%}
The purpose of this paper is
to prove Theorem \ref{Few_On_P_w_Tr}
stating that if the ground field $k$ is infinite, then the surjectivity of the
\'{e}tale excision property is true for any $\bb A^1$-invariant stable radditive
framed presheaf of Abelian groups $\cc F$. The injectivity of the \'{e}tale excision was proved in
\cite{GP4}. The surjectivity of the \'{e}tale excision was proved in \cite{GP4} if the ground field
is infinite of characteristic not 2. In this preprint the surjectivity, Theorem \ref{Few_On_P_w_Tr_Introduction},
of the \'{e}tale excision
is proved in the case of any infinite ground field.
That is the main aim of the paper.
%Theorem \ref{Few_On_P_w_Tr_Introduction} stated below is the {\it core\/} of
%the theory of big framed motives of~\cite{GP3}.
%The main goal of this paper is
%to prove Theorem \ref{Few_On_P_w_Tr_Introduction}.
More detailed statement is given in
Theorem \ref{Few_On_P_w_Tr}.

In the rest of the Introduction we state Theorem \ref{Few_On_P_w_Tr_Introduction}.
By Definition \ref{stab} each element $a\in Fr_n(X,Y)$ has its
support $Z_a$. It is a closed subset in $X\times \bb A^n$ which is
finite over $X$ and determined by $a$ uniquely. If the support $Z_a$
of an element $a\in Fr_n(X,Y)$ is a disjoint union of $Z_1$ and
$Z_2$, then the element $a$ determines uniquely two elements $a_1$
and $a_2$ in $Fr_n(X,Y)$ such that the support of $a_i$ is $Z_i$.
This is explained in Definition~\ref{stab} below. Therefore one can
form a subgroup $A(X,Y)$ of the free abelian group $\bb ZFr_n(X,Y)$
generated by elements of the form $1\cdot a-1\cdot a_1-1\cdot a_2$,
where $a\in Fr_n(X,Y)$ runs over those elements whose support $Z_a$
is a disjoint union of $Z_1$ and $Z_2$, and $a_1$, $a_2$ are the
elements determined by $a$ as just above.

The main result, Theorem~\ref{Few_On_P_w_Tr}, can be reformulated in terms of
$\bb ZF_*$-presheaves of abelian groups on smooth algebraic
varieties $Sm/k$. Recall that $\bb ZF_*(k)$ is defined
in~\cite[Definition~8.3]{GP3} as an additive category whose objects
are those of $Sm/k$ and Hom-groups are defined as follows (see
Definition \ref{stab}). We set for every $n\geq 0$ and $X,Y\in
Sm/k$,
   $$\bb ZF_n(X,Y)=\bb ZFr_n(X,Y)/A(X,Y).$$
%%%   $$\bb ZF_n(X,Y)=\bb ZFr_n(X,Y)/\langle Z_1\sqcup Z_2-Z_1-Z_2\rangle,$$
%%%where $Z_1,Z_2$ are supports of correspondences.
In other words,
$\bb ZF_n(X,Y)$ is a free abelian group generated by the framed
correspondences of level $n$ with connected supports. We then set
   $$\Hom_{\bb ZF_*(k)}(X,Y):=\bigoplus_{n\geq 0}\bb ZF_n(X,Y).$$
The canonical morphisms $Fr_*(X,Y) \to \Hom_{\bb ZF_*(k)}(X,Y)$
define a functor $R: Fr_*(k) \to \bb ZF_*(k)$, which is the identity
on objects. For any $\bb A^1$-invariant quasi-stable $\bb
ZF_*$-presheaf of Abelian groups $\cc F$ the functor $\cc F \circ R:
Fr_*(k)^{\op} \to Ab$ is $\bb A^1$-invariant quasi-stable radditive
framed presheaf of Abelian groups.

By definition, a $Fr_*$-presheaf $\cc F$ of Abelian groups is
stable if for any $k$-smooth variety the pull-back map
$\sigma^*_X: \cc F(X) \to \cc F(X)$
equals the identity map, where
$\sigma_X=(X\times 0, X\times \bb A^1, t; pr_X) \in Fr_1(X,X)$.
In turn, $\cc F$ is quasi-stable if for any $k$-smooth variety the pull-back map
$\sigma^*_X: \cc F(X) \to \cc F(X)$
is an isomorphism. Also, recall that $\cc F$ is  radditive if
$\cc F(\emptyset)=\{0\}$ and $\cc F(X_1\sqcup X_2)=\cc F(X_1)\times \cc F(X_2)$.

For any $\bb A^1$-invariant stable (respectively quasi-stable)
radditive $Fr_*$-presheaf of Abelian groups $G$ there is a unique
$\bb A^1$-invariant stable (respectively quasi-stable) $\bb
ZF_*$-presheaf of Abelian groups $\cc F$ such that $G=\cc F \circ
R$. This follows easily from the Additivity Theorem of~\cite{GP3}.

Therefore {\it the category of $\bb A^1$-invariant stable
(respectively quasi-stable) radditive framed pre\-sheaves of Abelian
groups is equivalent to the category of $\bb A^1$-invariant stable
(respectively quasi-stable) $\bb ZF_*$-pre\-sheaves of Abelian
groups}.

The latter means that the main result formulated in the abstract is
equivalent to the following
\begin{thm}
\label{Few_On_P_w_Tr_Introduction}
For any $\bb A^1$-invariant quasi-stable $\bb ZF_*$-presheaf of abelian groups $\mathcal F$
the surjective part of the \'{e}tale excision is true.
\end{thm}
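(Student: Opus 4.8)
The plan is to imitate, within the framework of framed correspondences, the classical proof that a homotopy invariant presheaf with transfers satisfies Nisnevich excision, while replacing its one essentially geometric input by a construction valid in every characteristic; this replacement is exactly what removes the hypothesis $\charr k\neq 2$ used in \cite{GP4}. Fix an \'{e}tale excision datum: an elementary distinguished Nisnevich square with $\pi\colon X'\to X$ \'{e}tale, $Z\subset X$ closed, $Z'=\pi^{-1}(Z)$ and $\pi|_{Z'}\colon Z'\xrightarrow{\ \sim\ }Z$; put $U=X\setminus Z$ and $U'=X'\setminus Z'$. One must show that the \'{e}tale excision map $\mathcal F(X/U)\to\mathcal F(X'/U')$ --- that is, $\ker\!\big(\mathcal F(X)\to\mathcal F(U)\big)\to\ker\!\big(\mathcal F(X')\to\mathcal F(U')\big)$, the part complementary to the injectivity result of \cite{GP4} --- is surjective: every $b'\in\mathcal F(X')$ with $b'|_{U'}=0$ equals $\pi^{*}b$ for some $b\in\mathcal F(X)$ with $b|_{U}=0$. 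First I would reduce, by the standard limit arguments for radditive presheaves, to the case of a standard triple: a smooth affine relative curve $p\colon X\to S$ over the spectrum $S$ of a (semi)local essentially smooth $k$-algebra, equipped with a good compactification $\overline X/S$, with $X_\infty=\overline X\setminus X$, and with $Z,Z'$ finite over $S$ and disjoint from $X_\infty$. The hypothesis that $k$ is infinite enters precisely here, through the geometric presentation lemma (Gabber / the Nisnevich-local structure of smooth morphisms), which supplies such a standard triple.

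Next, on a standard triple I would construct an explicit framed correspondence realising a one-sided inverse of $\pi^{*}$ near $Z$. Since $\pi$ is \'{e}tale and restricts to an isomorphism on $Z'$, on a suitable finite --- indeed clopen-in-a-finite --- neighbourhood of $Z'$ it carries a canonical quasi-section, out of which one writes down a linear framed correspondence $\Theta\in\bb ZF_n(X,X')$ whose support projects into $Z$; then $\Theta$ restricted to the open $U\subset X$ is the zero correspondence, so that $b:=\Theta^{*}(b')\in\mathcal F(X)$ automatically satisfies $b|_{U}=0$. It remains to arrange that the composite $\Theta\circ\pi\in\bb ZF_n(X',X')$ is, up to an $\bb A^1$-homotopy of framed correspondences, the sum of a power of $\sigma_{X'}$ and a framed correspondence whose target-support lies in $U'$. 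Applying $\mathcal F$ to this homotopy and invoking $\bb A^1$-invariance to identify homotopic correspondences, quasi-stability to discard the $\sigma$-twist, and the Additivity Theorem of \cite{GP3} to split off the piece supported over $U'$ --- which annihilates $b'$ since $b'|_{U'}=0$ --- one gets $\pi^{*}b=b'$, the desired surjectivity. This is the framed incarnation of the step in the presheaves-with-transfers argument where one uses a finite correspondence $X\to X'$ supported near $Z$; here the relative-curve coordinates of the standard triple furnish the framing data instead.

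The hard part will be carrying out the construction of $\Theta$ and of its homotopy without ever inverting $2$: in \cite{GP4} the argument relied on a step that introduced a factor of $2$ (for instance a symmetrisation over an involution, or the use of a quadratic \'{e}tale covering). Instead I would build the framing directly from a regular system of parameters for $Z'\hookrightarrow X'$ provided by the characteristic-free presentation of the first step --- realising $X'$ near $Z'$ as an open subscheme of a hypersurface (or complete intersection) inside a relative affine space over $X$, with $Z'$ the common zero locus of coordinates --- and then deform the defining equations linearly along $\bb A^1$ to a split model in which $\Theta\circ\pi$ is visibly a power of $\sigma_{X'}$ plus a term over $U'$. The delicate points are: keeping all the auxiliary finite morphisms separable (again using that $k$ is infinite) so that the framing functions are well defined; arranging that the supports arising under composition of framed correspondences are disjoint, so that the Additivity Theorem applies term by term; and checking that each error term genuinely lands over the open complement, rather than merely becoming negligible after inverting $2$. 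Homotopy invariance and quasi-stability of $\mathcal F$ are used only at the very end, to collapse the residual $\sigma$-powers; essentially all of the difficulty is geometric and lives in the first two steps.
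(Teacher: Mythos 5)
Your skeleton agrees with the paper's (and with \cite{GP4}): reduce to a local/semi-local setting via Artin's almost elementary fibrations, construct a framed correspondence $\alpha$ (your $\Theta$) playing the role of a near-inverse to $\pi$, exhibit an $\bb A^1$-homotopy which splits $\alpha\circ\pi$ into a piece supported on the diagonal plus a piece supported over the open complement, and finish using additivity, $\bb A^1$-invariance and quasi-stability. What you have not located correctly is the actual source of the characteristic-$2$ obstruction in \cite{GP4}. It is not that an error term fails to land over the open complement --- that decomposition works in all characteristics --- nor is it literally a symmetrisation or a quadratic covering. It is the final \emph{straightening} step: the diagonal piece $\gamma$ carries a framing $(\varphi'_1,\dots,\varphi'_{N-1},h_0)$ generating the ideal $I$ of the diagonal, and to identify $[[\gamma]]$ with $[[can'']]\circ[[\sigma^N_{W'}]]$ one must control the determinant $J=\det A$ of the matrix carrying the standard basis $(\overline{t_i-t_i|_\Delta})$ of $I/I^2$ to $(\bar\varphi'_1,\dots,\bar\varphi'_{N-1},\bar h_0)$. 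In \cite{GP4} this was handled by writing $J=\lambda^2$ via Hensel's lemma applied to $t^2-J$, which requires $2$ to be invertible. Linearly interpolating the defining equations along $\bb A^1$, which you propose, is precisely what \cite{GP4} already does and leaves this Jacobian untouched.

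The paper's new ingredient is a short normalization performed \emph{before} the homotopy. The generator $e'$ of the ideal of the diagonal $\Delta(W')\subset W'\times X'$ is rescaled to $e'_{new}=\det(A(e'))^{-1}\cdot e'$, which forces $\det(A(e'_{new}))=1$ (Lemma~\ref{e'}(v)). The ample-line-bundle sections producing $h_\theta$ are then arranged so that $h_0$ agrees with this rescaled $e'$ on the first infinitesimal neighbourhood $\Delta^{(2)}(W')$ of the diagonal (Proposition~\ref{h'_theta}(g)); since the determinant depends only on $h_0$ modulo the square of the ideal, this gives $\det(A(h_0))=1$, and the characteristic-free version of the straightening theorem, with Jacobian equal to $1$ (\cite[Theorem 13.3]{GP4}), applies verbatim, with no square roots needed. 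That rescaling, and its propagation through Lemma~\ref{e'_and_e} and the construction of $F$ and $h_\theta$, is precisely what is absent from your sketch; choosing a ``regular system of parameters for $Z'\hookrightarrow X'$'' does not by itself arrange the relevant determinant to be $1$.
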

For more detailed statement see Theorem \ref{Few_On_P_w_Tr}.

%\begin{thm}[Main]\label{thm1}
%For any $\bb A^1$-invariant quasi-stable $\bb ZF_*$-presheaf of
%abelian groups $\cc F$, the associated Nisnevich sheaf $\cc
%F_{\nis}$ is $\bb A^1$-invariant whenever the base field $k$ is
%infinite of characteristic different from 2. Moreover, if the base
%field $k$ is infinite perfect of characteristic different from 2,
%then every $\bb A^1$-invariant quasi-stable Nisnevich framed sheaf
%of Abelian groups is strictly $\bb A^1$-invariant and quasi-stable.
%Furthermore, the same statements are true in characteristic 2 if we
%also assume that the $\bb ZF_*$-presheaf of abelian groups $\cc F$
%is a presheaf of $\bb Z[1/2]$-modules.
%\end{thm}

%\subsubsection*{Acknowledgements}
%The author is very greatful to Ivan Panin
%for very careful reading of the preliminary version of the manuscript.
%As a result the text was significantly improved.
The authors would like to thank
Alexey Ananyevskiy and Grigory Garkusha for their deep interest to the topic
of the present preprint. 

In the rest of the paper we suppose that {\it the base field $k$ is infinite}.

%The authors gratefully acknowledge excellent working conditions and support 
%provided by the RCN Frontier Research Group Project no. 250399 “Motivic Hopf equations" at University of Oslo. 
%Andrey Druzhinin and Alexander Neshitov for many
%helpful discussions.

\section{Recollections on Voevodsky's framed correspondences}\label{ohoho}
We recollect here basic facts for framed correspondences
in the sense of Voevodsky~\cite{Voe2} and for linear framed correspondences
in the sense of \cite[Section 8]{GP3}.
This Section is  mainly copied from \cite[Section 2]{GP3} and \cite[Section 8]{GP3}.
We include it here
for convenience of the reader.
We start
with preparations.

Let $S$ be a scheme and $Z$ be a closed subscheme. Recall that an
{\it \'{e}tale neighborhood of $Z$ in $S$\/} is a triple
$(W',\pi':W'\to S,s': Z\to W')$ satisfying the conditions:

\indent (i) $\pi'$ is an \'{e}tale morphism;

\indent (ii) $\pi'\circ s'$ coincides with the inclusion
$Z\hookrightarrow S$ (thus $s'$ is a closed embedding);

\indent (iii) $(\pi')^{-1}(Z)=s'(Z)$

A morphism between two \'{e}tale neighborhoods
$(W',\pi',s')\to(W'',\pi'',s'')$ of $Z$ in $S$
%in this category
is a morphism $\rho:W'\to W''$ such that $\pi''\circ\rho=\pi'$ and
$\rho\circ s'=s''$. Note that such $\rho$ is automatically \'etale.
%by~\cite[VI.4.7]{LNM146}.

\begin{defs}[Voevodsky~\cite{Voe2}]\label{frame_corr}{\rm
For $k$-smooth schemes $X, Y$ and $n\geq 0$ an {\it explicit framed
correspondence  $\Phi$ of level $n$\/} consists of the following
data:

\begin{enumerate}
\item a closed subset $Z$ in $\mathbb A^n_X$ which is finite over $X$;
\item an etale neighborhood $p:U\to\mathbb A^n_X$ of $Z$ in $\mathbb A^n_X$;
\item a collection of regular functions $\phi=(\phi_1,\ldots,\phi_n)$ on $U$
such that $\cap_{i=1}^n \{\phi_i=0\}=Z$;
\item a morphism $g:U\to Y$.
\end{enumerate}
The subset $Z$ will be referred to as the {\it support\/} of the
correspondence. We shall also write triples $\Phi=(U,\phi,g)$ or
quadruples $\Phi=(Z,U,\phi,g)$ to denote explicit framed
correspondences.

Two explicit framed correspondences $\Phi$ and $\Phi'$ of level $n$
are said to be {\it equivalent\/} if they have the same support and
there exists an open neighborhood $V$ of $Z$ in $U \times_{\bb
A^n_X} U'$ such that on $V$, the morphism $g\circ pr$ agrees with
$g' \circ pr'$ and $\phi \circ pr$ agrees with $\phi' \circ pr'$. A
{\it framed correspondence of level $n$} is an equivalence class of
explicit framed correspondences of level $n$.
}\end{defs}

We let $Fr_n(X,Y)$ denote the set of framed correspondences from
$X$ to $Y$. We consider it as a pointed set with the basepoint
being the class $0_n$ of the explicit correspondence with
$U=\emptyset$.

As an example, the sets $Fr_0(X,Y)$ coincide
with the set of pointed morphisms $X_+\to Y_+$. In particular, for a
connected scheme $X$ one has\label{fr0}
   $$Fr_0(X,Y)=\text{Hom}_{Sm/k}(X,Y)\sqcup\{0_0\}.$$

If $f:X'\to X$ is a morphism of schemes and $\Phi=(U,\phi,g)$ an
explicit correspondence from $X$ to $Y$ then
   $$f^*(\Phi):=(U'=U\times_X X',\phi\circ pr,g\circ pr)$$
is an explicit correspondence from $X'$ to $Y$.

\begin{rem}\label{short_Notation}
{\rm
Let $\Phi=(Z,\mathbb A^n_X \xleftarrow{p} U,\phi: U \to \mathbb
A^n_k,g: U\to Y) \in Fr_n(X,Y)$ be an {\it explicit framed
correspondence  of level n}. It can more precisely be written in the
form
\[
((\alpha_1,\alpha_2,\dots,\alpha_n),f,Z,U,(\varphi_1,\varphi_2,\dots,\varphi_n),g)\in Fr_n(X,Y)
\]
where
\begin{itemize}
\item[$\diamond$]
$Z\subset \bb A^n_X$ is a closed subset finite over $X$,
\item[$\diamond$]
an etale neighborhood
$(\alpha_1,\alpha_2,\dots,\alpha_n),f)=p: U \to \mathbb A^n_k \times X$ of $Z$,
\item[$\diamond$]
a collection of regular functions $\phi=(\phi_1,\ldots,\phi_n)$ on $U$
such that $\cap_{i=1}^n \{\phi_i=0\}=Z$;
\item[$\diamond$] a morphism $g:U\to Y$.
\end{itemize}
We shall usually drop $((\alpha_1,\alpha_2,\dots,\alpha_n),f)$ from
notation and just write
\[
(Z,U,(\varphi_1,\varphi_2,\dots,\varphi_n),g)=((\alpha_1,\alpha_2,\dots,\alpha_n),f,Z,U,(\varphi_1,\varphi_2,\dots,\varphi_n),g).
\]

}\end{rem}

The following definition is to describe compositions of framed
correspondences.

\begin{defs}\label{cat_Fr_+}{\rm
Let $X,Y$ and $S$ be $k$-smooth schemes and let
\begin{gather*}
a=((\alpha_1,\alpha_2,\dots,\alpha_n),f,Z,U,(\varphi_1,\varphi_2,\dots,\varphi_n),g)
\end{gather*}
be an explicit correspondence of level $n$ from $X$
to $Y$ and let
\begin{gather*}
b=((\beta_1,\beta_2,\dots,\beta_m),f',Z',U',(\psi_1,\psi_2,\dots,\psi_m),g')\in Fr_m(Y,S)
\end{gather*}
be an explicit correspondence of level $m$ from $Y$ to $S$. We
define their composition as an explicit correspondence of level
$n+m$ from $X$ to $S$ by
\[
((\alpha_1,\alpha_2,\dots,\alpha_n,\beta_1,\beta_2,\dots,\beta_m),f,Z\times_Y Z',U\times_Y U',(\varphi_1,\varphi_2,\dots,\varphi_n,\psi_1,\psi_2,\dots,\psi_m),g').
\]
Clearly, the composition of explicit correspondences respects the
equivalence relation on them and defines associative maps
   \begin{equation*}\label{compos}
    Fr_n(X,Y)\times Fr_m(Y,S)\to Fr_{n+m}(X,S).
   \end{equation*}
}\end{defs}

Given $X, Y\in Sm/k$, denote by $Fr_+(X,Y)$ the set $\bigvee_n
Fr_n(X,Y)$. The composition of framed correspondences defined above
gives a category $Fr_+(k)$. Its objects are those of $Sm/k$ and the
morphisms are given by the sets $Fr_+(X,Y)$, $X, Y\in Sm/k$. Since
the naive morphisms of schemes can be identified with certain framed
correspondences of level zero, we get a canonical functor
   $$Sm/k\to Fr_+(k).$$
{\it The category $Fr_+(k)$ has the zero object.
It is the empty scheme.
}
One can easily see that for a framed correspondence $\Phi:X\to Y$
and a morphism $f:X'\to X$, one has $f^*(\Phi)=\Phi\circ f$.

%\begin{defs}\label{d:boxpairing}{\rm
%Let $X,Y,S$ and $T$ be smooth schemes. There is an \textit{external
%product}
%\[
%Fr_n(X,Y)\times Fr_m(S,T) \xrightarrow{-\boxtimes -}
%Fr_{n+m}(X\times S, Y\times T)
%\]
%given by
%\begin{gather*}
%((\alpha_1,\alpha_2,\dots,\alpha_n),f,Z,U,(\varphi_1,\varphi_2,\dots,\varphi_n),g)\boxtimes
%((\beta_1,\beta_2,\dots,\beta_m),f',Z',U',(\psi_1,\psi_2,\dots,\psi_m),g')=
%\\
%((\alpha_1,\alpha_2,\dots,\alpha_n,\beta_1,\beta_2,\dots,\beta_m),f\times
%f',Z\times Z',U\times
%U',(\varphi_1,\varphi_2,\dots,\varphi_n,\psi_1,\psi_2,\dots,\psi_m),g\times
%g').
%\end{gather*}

%For the constant morphism $c\colon \bb A^1\to pt$, we set (following
%Voevodsky~\cite{Voe2})
%\[
%\Sigma=-\boxtimes (t,c,\{0\},\bb A^1,t,c)\colon Fr_n(X,Y)\to Fr_{n+1}(X,Y)
%\]
%and refer to it as the \textit{suspension}. Note that the map $\Sigma$ coincides with
%the map $a\mapsto \sigma_Y \circ a: Fr_n(X,Y)\to Fr_{n+1}(X,Y)$, where
%$\sigma_Y=(Y\times 0, \bb A^1_Y, t, pr_Y)\in Fr_1(Y,Y)$.
%Also, following Voevodsky~\cite{Voe2}, one puts
%\[
%Fr(X,Y)=\colim(Fr_0(X,Y)\xrightarrow{\Sigma}Fr_1(X,Y)\xrightarrow{\Sigma}\dots
%\xrightarrow{\Sigma} Fr_n(X,Y)\xrightarrow{\Sigma}\dots)
%\]
%and refer to it as the \textit{set stable framed correspondences}.
%The above external product induces external products
%\begin{gather*}
%Fr_n(X,Y)\times Fr(S,T) \xrightarrow{-\boxtimes -} Fr(X\times S, Y\times T),\\
%Fr(X,Y)\times Fr_0(S,T) \xrightarrow{-\boxtimes -} Fr(X\times S, Y\times T).
%\end{gather*}
%}\end{defs}

Recall the definition of the category of linear framed correspondences $\bb ZF_*(k)$ introduced in
\cite[Definition. 8.3]{GP3}.

\begin{defs}\label{stab}{\rm
Let $X$ and $Y$ be smooth schemes. Denote by
\begin{itemize}
\item[$\diamond$]
${\bb Z}Fr_n(X,Y):=\widetilde{\mathbb{Z}}[Fr_n(X,Y)]=\mathbb{Z}[Fr_n(X,Y)]/\mathbb{Z}\cdot
0_n$, i.e the free abelian group generated by the set $Fr_n(X,Y)$
modulo $\mathbb{Z}\cdot 0_n$;

\item[$\diamond$]
${\bb Z}F_n(X,Y):={\bb Z}Fr_n(X,Y)/A$, where $A$ is a subgroup
generated by the elementts
\begin{multline*}
(Z\sqcup Z', U,(\varphi_1,\varphi_2,\dots,\varphi_n),g) - \\
-(Z, U\setminus
Z',(\varphi_1,\varphi_2,\dots,\varphi_n)|_{U\setminus
Z'},g|_{U\setminus Z'}) - (Z',{U\setminus
Z},(\varphi_1,\varphi_2,\dots,\varphi_n)|_{U\setminus
Z},g|_{U\setminus Z}).
\end{multline*}
\end{itemize}
We shall also refer to the latter relation as the {\it additivity
property for supports}. In other words, it says that a framed
correspondence in ${\bb Z}F_n(X,Y)$ whose support is a disjoint
union $Z\sqcup Z'$ equals the sum of the framed correspondences with
supports $Z$ and $Z'$ respectively. Note that ${\bb Z}F_n(X,Y)$ is
$\mathbb{Z}[Fr_n(X,Y)]$ modulo the subgroup generated by the
elements as above, because $0_n=0_n+0_n$ in this quotient group,
hence $0_n$ equals zero. Indeed, it is enough to observe that the
support of $0_n$ equals $\emptyset\sqcup\emptyset$ and then apply
the above relation to this support.

The elements of ${\bb Z}F_n(X,Y)$ are called {\it linear framed
correspondences of level $n$} or just {\it linear framed
correspondences}. It is worth to mention that ${\bb Z}F_n(X,Y)$
is a free abelian group generated by
the elements of $Fr_n(X,Y)$ with connected support.

Denote by ${\bb Z}F_*(k)$ an additive category whose objects are
those of $Sm/k$ with $\text{Hom}$-groups defined as
   $$\text{Hom}_{{\bb Z}F_*(k)}(X,Y)=\bigoplus_{n\geq 0}{\bb Z}F_n(X,Y).$$
The composition is induced by the composition in the category
$Fr_+(k)$.

There is a canonical functor $Sm/k \to {\bb Z}F_*(k)$ which is the identity on
objects and which takes a regular morphism $f: X\to Y$ to the linear
framed correspondence $1\cdot(X,X\times \bb A^0,pr_{\bb A^0},f\circ pr_X)
\in {\bb Z}F_0(k)$.

}\end{defs}

%\begin{defs}\label{d:boxpairing_linear}{\rm
%Let $X,Y,S$ and $T$ be schemes. The external product from
%Definition~\ref{d:boxpairing} induces a unique external product
%\[
%{\bb Z}F_n(X,Y)\times{\bb Z}F_m(S,T) \xrightarrow{-\boxtimes -} {\bb
%Z}F_{n+m}(X\times S, Y\times T)
%\]
%such that for any elements $a \in Fr_n(X,Y)$ and $b\in Fr_m(S,T)$
%one has $1\cdot a\boxtimes 1\cdot b=1\cdot(a\boxtimes b)\in {\bb
%Z}F_{n+m}(X\times S, Y\times T)$.
%}\end{defs}

%For the constant morphism $c\colon \bb A^1\to pt$, we set
%\[\Sigma:=-\boxtimes 1\cdot(\{0\},\bb A^1,t,c)\colon{\bb Z}F_n(X,Y)\to {\bb Z}F_{n+1}(X,Y)\]
%and refer to it as the \textit{suspension}.

%\begin{defs}\label{d:stable_linear_fr_corr}{\rm
%For any $k$-smooth variety $Y$ there is a presheaf ${\bb Z}F_*(-,Y)$
%on the category ${\bb Z}F_*(k)$ represented by $Y$. We also have a
%${\bb Z}F_*(k)$-presheaf
%   $${\bb Z}F(-,Y):=\colim({\bb Z}F_0(-,Y)\xrightarrow{\Sigma}{\bb Z}F_1(-,Y)
%     \xrightarrow{\Sigma}\cdots\xrightarrow{\Sigma}{\bb Z}F_n(-,Y)\xrightarrow{\Sigma}\cdots).$$
%For a $k$-smooth variety $X$, the elements of ${\bb Z}F(X,Y)$ are
%also called {\it stable linear framed correspondences}. Stable
%linear framed correspondences {\it do not form} morphisms of a
%category.}
%\end{defs}

\begin{rem}{\rm
For any $X,Y$ in $Sm/k$ one has equalities ${\bb Z}F_*(-,X\sqcup Y)={\bb
Z}F_*(-,X)\oplus{\bb Z}F_*(-,Y)$.
%and ${\bb Z}F(-,X\sqcup Y)={\bb
%Z}F(-,X)\oplus{\bb Z}F(-,Y)$.
}\end{rem}

\section{Two major theorems}
The main aim of this section is to state two major theorems
(Theorems \ref{Few_On_P_w_Tr} and \ref{Surj_Etale_exc}) on
presheaves with framed transfers.
This Section is copied mainly from \cite[Section 3]{GP4}.
{\it However Theorems \ref{Few_On_P_w_Tr} and \ref{Surj_Etale_exc}
are proved in \cite[Section 3]{GP4}
only if the characteristic of the infinite base field is not 2.
}
%As an application, we deduce the
%following result (which is the first assertion of
%Theorem~\ref{thm1}).

%\begin{thm}\label{Nisnevich_sheaf}
%For any $\bb A^1$-invariant quasi-stable $\bb ZF_*$-presheaf of
%abelian groups $\cc F$, the associated Nisnevich sheaf $\cc
%F_{\nis}$ is $\bb A^1$-invariant and quasi-stable if the
%characteristic of the base field $k$ is different from 2. If the
%characteristic of $k$ equals 2 and $\cc F$ is an $\bb A^1$-invariant
%quasi-stable $\bb ZF_*$-presheaf of $\bb Z[1/2]$-modules, then the
%associated Nisnevich sheaf $\cc F_{\nis}$ is $\bb A^1$-invariant and
%quasi-stable presheaf of $\bb Z[1/2]$-modules.
%\end{thm}

We need some definitions. We will write $(V,\varphi;g)$ for an
element $a$ in $Fr_n(X,Y)$. We also write $Z_a$ to denote the
support of $(V,\varphi;g)$. It is a closed subset in $X\times \bb
A^n$ which is finite over $X$ and which coincides with the common
vanishing locus of the functions $\varphi_1, ... , \varphi_n$ in
$V$. Next, by $\langle V,\varphi;g \rangle$ we denote the image of
the element $1\cdot(V,\varphi;g)$ in $\bb ZF_n(X,Y)$.

\begin{defs}
\label{sigma} Given any $k$-smooth variety $X$, there is a
distinguished morphism $\sigma_X=(X\times \bb A^1,t,pr_X)\in
Fr_1(X,X)$. Each morphism $f: Y\to X$ in $Sm/k$ can be regarded
tautologically as a morphism in $Fr_0(Y,X)$.
\end{defs}

In what follows by $SmOp/k$ we mean a category whose objects
are pairs $(X,V)$, where $X\in Sm/k$ and $V$ is an open subset of
$X$, with obvious morphisms of pairs.

\begin{defs}
\label{bar_ZF} Define $\bb ZF^{pr}_*(k)$ as an additive category
whose objects are those of $SmOp/k$
%pairs $(X,X^0)$ where
%$X \in Sm/k$ and $X^0 \subset X$ is an open subvariety.
and Hom-groups are defined as follows. We set for every
$n\geq 0$ and $(X,V),(Y,W)\in SmOp/k$:
$$\bb ZF^{pr}_*((Y,W),(X,V))=\ker[\bb ZF_n(Y,X)\oplus \bb ZF_n(W,V)\xrightarrow{i^*_Y-i_{X,*}} \bb ZF_n(W,X)],$$
where $i_Y: W \to Y$ is the embedding and $i_X: V \to X$ is the
embedding. In other words, the group $\bb ZF^{pr}_*((Y,W),(X,V))$
consists of pairs $(a,b) \in \bb ZF_n(Y,X)\oplus \bb ZF_n(Y^0,X^0)$
such that $i_Y\circ b= a\circ i_X$. By definition, the composite
$(a,b)\circ (a',b')$ is the pair $((a\circ b), (a'\circ b'))$.

We define $\overline {\bb ZF}_*(k)$ as an additive category whose
objects are those of $Sm/k$ and Hom-groups are defined as follows.
We set for every $n\geq 0$ and $X,Y\in Sm/k$:
$$\overline {\bb ZF}_*(Y,X)= \coker[\bb ZF_*(\bb A^1\times Y,X) \xrightarrow{i^*_0-i^*_1} \bb ZF_*(Y,X)].$$

Next, one defines $\overline {\bb ZF}^{pr}_*(k)$ as an additive
category whose objects are those of $SmOp/k$
%pairs $(X,X^0)$ where
%$X \in Sm/k$ and $X^0 \subset X$ is an open subvariety.
and Hom-groups are defined as follows. We set for every
$n\geq 0$ and $(X,V),(Y,W)\in SmOp/k$:
$$\overline {\bb ZF}^{pr}_*((Y,W),(X,V))= \coker[\bb ZF^{pr}_*(\bb A^1\times (Y,W),(X,V)) \xrightarrow{i^*_0-i^*_1} \bb ZF^{pr}_*((Y,W),(X,V)].$$
\end{defs}

\begin{notn}
Given $a \in \bb ZF_*(Y,X)$, denote by $[a]$ its class in $\overline
{\bb ZF}_*(Y,X)$. Similarly, if $r=(a,b) \in \bb
ZF^{pr}_*((Y,W),(X,V))$, then we will write $[[r]]$ to denote its
class in $\overline {\bb ZF}^{pr}_*((Y,W),(X,V))$.

If $X^0$ is open in $X$ and $Y^0$ is open in $Y$ and $g(Z^0) \subset
Y^0$ with $Z^0$ the support of $(V,\varphi;g)$, then $\langle\langle
V,\varphi;g \rangle\rangle$ will stand for the element $(\langle
V,\varphi;g \rangle,\langle V^0,\varphi^0;g^0 \rangle)$ in $\bb
ZF_n((X,X^0),(Y,Y^0))$.

We will as well write $[V,\varphi;g]$ to denote the class of
$\langle V,\varphi;g \rangle$ in $\overline {\bb ZF}_n(X,Y)$. In
turn, $[[V,\varphi;g]]$ will stand for the class of $\langle\langle
V,\varphi;g \rangle\rangle$ of $\overline {\bb
ZF}_n((X,X^0),(Y,Y^0))$.
\end{notn}

\begin{rem} Clearly, the category
$\bb ZF_*(k)$ is a full subcategory of $\bb ZF^{pr}_*(k)$ via the
assignment $X \mapsto (X,\emptyset)$. Similarly, the category
$\overline {\bb ZF}_*(k)$ is a full subcategory of $\overline {\bb
ZF}^{pr}_*(k)$ via the assignment $X \mapsto (X,\emptyset)$.
\end{rem}

In what follows we will also use the following category.

\begin{defs}
Let $\overline{\overline {\bb ZF}}_*(k)$ be a category whose objects
are those of $SmOp/k$ and whose $Hom$-groups are obtained from the
$Hom$-groups of the category $\overline {\bb ZF}^{pr}_*(k)$ by
annihilating the identity morphisms $id_{(X,X)}$ of objects of the
form $(X,X)$ for all $X \in Sm/k$.
\end{defs}

\begin{notn}
\label{sigma_and_pairs} If $r=(a,b) \in \bb ZF^{pr}_*((Y,W),(X,V))$,
then we will write $\overline {[[r]]}$ for its class in
$\overline{\overline {\bb ZF}}_*((Y,W),(X,V))$. For $(X,V)$ in
$SmOp/k$ we write $\langle\langle \sigma_X \rangle\rangle$ for the
morphism $(1\cdot \sigma_X, 1\cdot \sigma_V)$ in $\bb
ZF_1((X,V),(X,V))$.

We will denote by $\overline {[[V,\varphi;g]]}$ the class of the
element $[[V,\varphi;g]]$ in $\overline{\overline {\bb
ZF}}_n((X,X^0),(Y,Y^0))$.
\end{notn}

\begin{constr}
\label{F_on_pairs}
Let $\cc F$ be an $\bb A^1$-invariant $\bb ZF_*$-presheaf of abelian
groups. Then the assignments
$(X,V) \mapsto \cc F(X,V):= \cc F(V)/Im(\cc F(X))$
and
$$(a,b)\mapsto [(a,b)^*=b^*: \cc F(V)/Im(\cc F(X)) \to \cc F(W)/Im(\cc F(Y))],$$
for any $(a,b)\in \bb ZF_*((Y,W),(X,V))$ define a presheaf $\cc
F^{pairs}$ on the category $\overline{\overline {\bb ZF}}_*(k)$.
\end{constr}

To formulate
%further two theorems relating
the surjective part of the \'{e}tale excision
property, we need some preparations. Let
$S\subset X$ and $S'\subset X'$ be closed subsets. Let
$$\xymatrix{V'\ar[r]\ar[d]&X'\ar^{\Pi}[d]\\
               V\ar[r]&X}$$
be an elementary distinguished square with $X$ and $X'$ affine
$k$-smooth. Let $S=X-V$ and $S'=X'-V'$ be closed subschemes equipped
with reduced structures. Let $x\in S$ and $x' \in S'$ be two points
such that $\Pi(x')=x$. Let $U=Spec(\cc O_{X,x})$ and $U'=Spec(\cc
O_{X',x'})$. Let $\pi: U' \to U$ be the morphism induced by $\Pi$.

%\begin{thm}[Injective \'{e}tale excision]
%\label{Inj_Etale_exc} Under the notation above there is an integer
%$N$ and a morphism $r \in\bb ZF_N((U,U-S),(X',X'-S'))$ such that
%$$\overline {[[\Pi]]}\circ \overline {[[r]]}=\overline{[[can]]}\circ \overline{[[\sigma^N_U]]} $$
%in $\overline {\overline {\bb ZF_N}}((U,U-S),(X,X-S))$,
%where $can: U \to X$ is the canonical morphism.
%\end{thm}

%The statements of the next theorem depend on the characteristic of the base field $k$.

\begin{thm}[Surjective \'{e}tale excision]
\label{Surj_Etale_exc} Under the above notation
%suppose in addition that $S$ is $k$-smooth
%and $k$ is of characteristic different from 2. Then
there are an integer
$N$ and a morphism $l \in \bb ZF_N((U,U-S),(X',X'-S'))$ such that
$$\overline {[[l]]}\circ \overline {[[\pi]]}=\overline{[[can']]}\circ \overline{[[\sigma^N_{U'}]]} $$
in $\overline {\overline {\bb ZF_N}}((U',U'-S'),(X',X'-S'))$ with
$can': U' \to X'$ the canonical morphism.

%If the charcteristic of $k$ is $2$, then
%there are an integer
%$N$ and a morphism $l \in \bb ZF_N((U,U-S),(X',X'-S'))$ such that
%$$2\cdot \overline {[[l]]}\circ \overline {[[\pi]]}=2\cdot \overline{[[can']]}\circ \overline{[[\sigma^N_{U'}]]} $$
%in $\overline {\overline {\bb ZF_N}}((U',U'-S'),(X',X'-S'))$.
\end{thm}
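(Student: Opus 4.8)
\emph{Strategy.} The plan is geometric: present the local picture as a relative curve over affine space, build the pair $l$ on that curve out of the \'etale datum $\Pi$, and then match $\overline{[[l]]}\circ\overline{[[\pi]]}$ with $\overline{[[can']]}\circ\overline{[[\sigma^N_{U'}]]}$ in $\overline{\overline{\bb ZF}}_*$ using the additivity property for supports of Definition~\ref{stab} together with $\bb A^1$-invariance and quasi-stability. Shrinking $X$ to a Zariski neighbourhood of $x$ is harmless, since only $U=\spec\cc O_{X,x}$ enters the conclusion; so with $d=\dim_x X$ choose a smooth morphism $q\colon X\to\bb A^{d-1}_k$ of relative dimension one such that $q|_S$ is finite — even a closed immersion over a neighbourhood of $q(x)$ — and such that $X\to\bb A^{d-1}$ extends to a proper (smooth projective) relative curve $\overline{X}\to\bb A^{d-1}$ whose boundary $\overline{X}\setminus X$ is finite over $\bb A^{d-1}$ and disjoint from $S$. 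A presentation of this kind exists over an arbitrary infinite field; this is precisely the step at which the hypothesis $\chr k\neq 2$ of \cite{GP4} is removed. Pulling back along $\Pi$ and using the isomorphism $S'\xrightarrow{\ \sim\ }S$, the composite $q\circ\Pi$ presents $(X',S')$ compatibly; after base change to the local base $B_0=\spec\cc O_{\bb A^{d-1},q(x)}$ the relative curves carry enough global sections of line bundles.

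\emph{Construction of $l$.} On the proper relative curve $\overline{X}'$ over the local base $B_0$ choose a rational function whose fibrewise divisor accounts for a lift of $S$ together with controlled poles at the boundary $\overline{X}'\setminus X'$ (this is where the local base is used). Out of this function, the fibrewise coordinate of the relative curve $X\to\bb A^{d-1}$, and the germ of $\Pi$ near $S$ — note that, since $\Pi|_{S'}$ is an isomorphism, topological invariance of the \'etale site canonically identifies the henselizations of $X$ along $S$ and of $X'$ along $S'$, so a germ of $\Pi^{-1}$ near $S$ is available — the relative-curve machinery of \cite{GP3,GP4} produces a framed correspondence of some level $N$ whose restriction along $U\to X$ is a pair $l=(a,b)\in\bb ZF_N((U,U-S),(X',X'-S'))$, with $b$ the restriction of $a$ over the open complement of $S$ and with the germ of $a$ near $S$ equal to the one dictated by that canonical identification. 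Assembling $l$ so that it is globally defined, of controlled level, and compatible on the open part is the technical heart of the proof, carried out here without dividing by $2$.

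\emph{Verification of the identity.} Compose with $\pi\colon U'\to U$. By the additivity property for supports (Definition~\ref{stab}) the support of $\pi^{*}a$ in $\bb A^N_{U'}$ splits into the part lying over $S'\subset U'$ and a part lying over $U'-S'$. Because the germ of $a$ near $S$ is the canonical one, the component over $S'$ is, after translating its support onto the zero section and an $\bb A^1$-homotopy interpolating the framing functions to the coordinates (and adjusting the structure map if needed), equal to $can'\circ\sigma^N_{U'}$ already in $\overline{\bb ZF}^{pr}_*$. The complementary component is a morphism of pairs whose $X'$-component has support disjoint from $S'$; as a morphism of $\overline{\bb ZF}^{pr}_*$ it factors through an object $(X'-S',X'-S')$, hence through the identity $\id_{(X'-S',X'-S')}$, and therefore vanishes in $\overline{\overline{\bb ZF}}_*$. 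The residual dependence on the presentation $q$, on $N$, and on the chosen function is absorbed by $\bb A^1$-invariance and quasi-stability, which is why the statement is placed in $\overline{\overline{\bb ZF_N}}$.

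\emph{Main obstacle.} The crux is the presentation step, and the construction of $l$ it supports, when $\chr k=2$. In \cite{GP4} the relative-curve presentation, the rational function on $\overline{X}'$, and the connecting homotopies rested on choices that implicitly inverted $2$ — a quadratic normalisation of an auxiliary linear projection, and a passage to $t\mapsto t^{2}$ used to make an intermediate morphism finite near $S$. The resolution is to invoke a geometric presentation lemma valid over every infinite field and to rebuild the rational function and the collapsing homotopies from the \'etale structure of $\Pi$ near $S$ and from sections of line bundles on the local relative curves alone — none of which requires $2$ to be a unit. The genuine work is to check that each $\bb A^1$-homotopy and each support splitting used in the characteristic $\neq 2$ argument of \cite{GP4} admits such a characteristic-free replacement, and then to reassemble them into the single identity in $\overline{\overline{\bb ZF_N}}((U',U'-S'),(X',X'-S'))$.
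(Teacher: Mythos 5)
Your proposal has the right overall shape — present $(X,S)$ and $(X',S')$ over a base as relative curves of an almost elementary fibration, compactify, choose a rational function (i.e.\ a quotient of sections of a line bundle on the proper relative curve) whose zero locus splits off the diagonal, and then close the argument by additivity for supports together with an $\bb A^1$-homotopy and the ``straightening'' theorem that collapses a framed correspondence with connected diagonal support to $can'\circ\sigma^N$. This is indeed the architecture of the paper (diagram \eqref{SquareDiagram_new}, Proposition \ref{Framing_Of_X'}, Proposition \ref{h'_theta}, Constructions \ref{Constr_alpha}--\ref{Constr_gamma}, Proposition \ref{Surjectivity_Part_Details_2_2}).

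However, your proposal stops short of the actual new ingredient that removes the restriction $\operatorname{char}k\neq 2$, and this is precisely the point the theorem is about. In \cite{GP4} one ends up with $[[\gamma]]$ (the piece supported on the diagonal) and needs it to equal $[[can'']]\circ[[\sigma^N]]$. The straightening theorem gives this when the Jacobian matrix $A$ comparing the chosen framing to the linear coordinates has $\det A=1$. In \cite{GP4} this equality was only arranged modulo a unit $J$ with $J|_{S'}=1$, which, after henselization, makes $J$ a square $\lambda^2$, and then one uses $[[\lambda^2\,t]]=[[t]]$; that identity requires $2$ to be invertible. The fix in the present paper is entirely different and completely elementary: Lemma \ref{e'}(v). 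The ideal $J$ of the diagonal $\Delta(W')\subset W'\times X'$ has $J/J^2$ free of rank one; choosing a generator $e'$ one gets a matrix $A(e')$ with $\det A(e')\in k[W']^\times$, and then one simply replaces $e'$ by $e'_{\mathrm{new}}=\det(A(e'))^{-1}\cdot e'$, for which $\det A(e'_{\mathrm{new}})=1$ identically. The function $h_0$ is then constructed (Proposition \ref{h'_theta}(g)) so that its class modulo $J^2$ is exactly $e'_{\mathrm{new}}$, and the straightening theorem \cite[Theorem 13.3]{GP4} applies directly, in every characteristic. Your ``main obstacle'' paragraph correctly names the problem but proposes no concrete mechanism; in particular, ``rebuild the rational function and the collapsing homotopies from the \'etale structure of $\Pi$ near $S$'' is not a replacement for this normalisation of the framing generator, and your appeal to topological invariance of the \'etale site does not appear in, nor substitute for, the argument.

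A secondary point: the constructions in the paper are carried out over the \emph{semi-local} schemes $W=\operatorname{Spec}\cc O_{X,\{s_0,\dots,s_n\}}$ and $W'=\operatorname{Spec}\cc O_{X',\{s'_0,\dots,s'_n\}}$ of Definition \ref{W_and_W'}, not just over $U$ and $U'$; this is what guarantees that the Cartier divisor at infinity on $\bar X'$ is ample over a semi-local base, so that one has enough sections of $L^{\otimes M}$ to prescribe the restrictions in the proof of Proposition \ref{h'_theta}. The passage from $W,W'$ to $U,U'$ is only taken at the very end. Your proposal works directly with $U$, $U'$ and a local base $B_0$, which as stated would not give you control of the sections you need.
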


We are now in a position to prove the following

\begin{thm}
\label{Few_On_P_w_Tr} For any $\bb A^1$-invariant quasi-stable $\bb
ZF_*$-presheaf of abelian groups $\cc F$
%the following statements
%are true:
%\begin{itemize}
%\ref{Surj_Etale_exc}
the map
$$[[\Pi]]^*: \cc F(U-S)/\cc F(U) \to  \cc F(U'-S')/\cc F(U')$$
is an
surjective.
%\end{itemize}
\end{thm}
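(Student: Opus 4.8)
The key is that Theorem~\ref{Few_On_P_w_Tr} should follow essentially formally from Theorem~\ref{Surj_Etale_exc} by applying the presheaf $\cc F$. The plan is: first, observe that $\cc F$ being quasi-stable means $\sigma_X^*$ is an isomorphism on $\cc F(X)$ for all $X$, hence the induced operator $\langle\langle\sigma_{(X,V)}\rangle\rangle^*$ acts invertibly on $\cc F(X,V)=\cc F(V)/\im(\cc F(X))$ in the sense of Construction~\ref{F_on_pairs}. Consequently the functor $\cc F^{pairs}$ on $\overline{\overline{\bb ZF}}_*(k)$ factors through inverting $\sigma$, and in particular the classes $\overline{[[\cdot]]}$ of morphisms in $\overline{\overline{\bb ZF}}_*(k)$ act on the groups $\cc F(-,-)$ with the $\sigma$-maps becoming invertible. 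This is where one must be a little careful: $\overline{\overline{\bb ZF}}_*(k)$ is formed by annihilating $\id_{(X,X)}$, so one needs the maps $\cc F(X,X)=\cc F(X)/\im(\cc F(X))=0$ to make the passage to $\overline{\overline{\bb ZF}}_*(k)$ legitimate — which is automatic since those target groups vanish.

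Second, I would apply $\cc F^{pairs}$ to the identity of Theorem~\ref{Surj_Etale_exc}. Writing $\pi^*=[[\Pi]]^*$ for the map $\cc F(U-S)/\cc F(U)\to\cc F(U'-S')/\cc F(U')$ induced by $\pi$, and $l^*$ for the map $\cc F(X'-S')/\cc F(X')\to\cc F(U-S)/\cc F(U)$ induced by the framed correspondence $l\in\bb ZF_N((U,U-S),(X',X'-S'))$, the equality
\[
\overline{[[l]]}\circ\overline{[[\pi]]}=\overline{[[can']]}\circ\overline{[[\sigma^N_{U'}]]}
\]
becomes, after applying $\cc F^{pairs}$ (which is contravariant),
\[
\pi^*\circ l^* = (\sigma^N_{U'})^*\circ (can')^*
\]
as maps $\cc F(X'-S')/\cc F(X')\to\cc F(U'-S')/\cc F(U')$. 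Here $(can')^*$ is the restriction map $\cc F(X'-S')/\cc F(X')\to\cc F(U'-S')/\cc F(U')$ induced by $can':U'\to X'$, and $(\sigma^N_{U'})^*$ is an isomorphism by quasi-stability. The point now is that the right-hand side $(\sigma^N_{U'})^*\circ(can')^*$ is surjective: indeed $(can')^*$ is itself surjective because $U'=\spec\cc O_{X',x'}$ is the localization of the smooth affine $X'$ at $x'$, so $\cc F(U')=\colim\cc F(X'_\alpha)$ over Nisnevich (indeed Zariski) neighborhoods $X'_\alpha$ of $x'$, and likewise $\cc F(U'-S')=\colim\cc F(X'_\alpha-S')$; since $X'-S'\supset$ each such neighborhood's open part, the restriction $\cc F(X'-S')\to\cc F(U'-S')$ is surjective on the relevant colimit, hence so is the induced map on quotients. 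Therefore $\pi^*\circ l^*$ is surjective, and a fortiori $\pi^*=[[\Pi]]^*$ is surjective.

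\textbf{Main obstacle.} The genuine content of the argument is entirely in Theorem~\ref{Surj_Etale_exc}; once that is granted, the deduction above is soft. The one place requiring care is the bookkeeping in passing from the identity in $\overline{\overline{\bb ZF_N}}$ to an identity of homomorphisms of $\cc F$-groups: one must check that $\cc F^{pairs}$ is genuinely well-defined as a presheaf on $\overline{\overline{\bb ZF}}_*(k)$ (the annihilation of $\id_{(X,X)}$ is compatible because $\cc F(X,X)=0$, but one should verify that all the composites appearing respect this), and that $\sigma$ really does act invertibly on the quotient groups $\cc F(V)/\im\cc F(X)$ — this uses that $\sigma_X$ and $\sigma_V$ are compatible under the open inclusion $V\hookrightarrow X$, so $\langle\langle\sigma_{(X,V)}\rangle\rangle$ induces a well-defined endomorphism of the pair-presheaf which is invertible because it is so on both $\cc F(V)$ and $\cc F(X)$ separately, by quasi-stability, and invertibility passes to the quotient. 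I expect this to be the only nontrivial step in the proof of Theorem~\ref{Few_On_P_w_Tr}; everything else is a direct application of Theorem~\ref{Surj_Etale_exc} together with the fact that localization at a point is a filtered colimit of Zariski opens.
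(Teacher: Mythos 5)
Your overall strategy coincides with the paper's: interpret $\cc F$ as the pairs-presheaf $\cc F^{pairs}$ on $\overline{\overline{\bb ZF}}_*(k)$ via Construction~\ref{F_on_pairs}, apply it contravariantly to the identity of Theorem~\ref{Surj_Etale_exc} to obtain
$\pi^* \circ l^* = (\sigma^N_{U'})^* \circ (can')^*$
as homomorphisms $\cc F(X'-S')/\cc F(X')\to\cc F(U'-S')/\cc F(U')$, and then conclude surjectivity of $\pi^*$. The observations about quasi-stability making $(\sigma^N_{U'})^*$ invertible, and about $\cc F^{pairs}$ descending to $\overline{\overline{\bb ZF}}_*(k)$ because $\cc F(X,X)=0$, are correct and match what the paper leaves implicit.

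However, there is a genuine gap in the last step. You claim that $(can')^*:\cc F(X'-S')/\cc F(X')\to\cc F(U'-S')/\cc F(U')$ is surjective because $\cc F(U'-S')=\colim_\alpha\cc F(X'_\alpha-S')$ and ``$X'-S'$ contains each neighborhood's open part.'' This reasoning has the direction of the restriction maps reversed. Since $X'_\alpha-S'\subset X'-S'$, the restriction runs $\cc F(X'-S')\to\cc F(X'_\alpha-S')$, so $\cc F(X'-S')$ is merely the \emph{initial} term of the directed system whose colimit is $\cc F(U'-S')$. A single term of a filtered colimit need not surject onto the colimit: elements of $\cc F(U'-S')$ are represented on arbitrarily small neighborhoods $X'_\alpha$ and may fail to extend to $X'$. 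The same remark applies to the induced map on cokernels, so the displayed identity alone does not yield surjectivity of $\pi^*$.

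The correct way to close the argument is to exploit the fact that Theorem~\ref{Surj_Etale_exc} can be applied after shrinking $X'$ (and $X$ correspondingly). Given $\bar y\in\cc F(U'-S')/\cc F(U')$, continuity of $\cc F$ produces an affine open $X'_\alpha\ni x'$ and a class $\bar y_\alpha\in\cc F(X'_\alpha-S')/\mathrm{Im}\,\cc F(X'_\alpha)$ restricting to $\bar y$. Choose an affine open $X_\beta\ni x$ in $X$ with $X_\beta\cap\Pi(S'\setminus X'_\alpha)=\emptyset$; then $(X'_\alpha\cap\Pi^{-1}(X_\beta))\to X_\beta$ is still an elementary distinguished square, and $U,U',S\cap U,S'\cap U'$ are unchanged. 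Applying Theorem~\ref{Surj_Etale_exc} to the shrunk square yields $l_\alpha$ with
$\pi^*\bigl(l_\alpha^*(\bar y_\alpha)\bigr)=(\sigma^{N}_{U'})^*(\bar y)$;
since $\pi\circ\sigma^N_{U'}=\sigma^N_U\circ\pi$, one gets $\bar y=\pi^*\bigl((\sigma^{-N}_U)^*l_\alpha^*(\bar y_\alpha)\bigr)$, which is the desired preimage. This is the step the paper's terse proof expects the reader to supply, and it is where your proposed argument currently fails.
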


\begin{proof}
Without loss of generality we may assume that $\cc F$ is stable and prove the theorem for this case
(by slight modifications, which are left to the reader, the theorem is similarly proved if $\cc F$ is quasi-stable).
%Assertions (1), (3) and (3') follow from Theorems \ref{InjOnAffLine} and
%\ref{Local_Inj}.
To prove assertion
%(2), (4) and
(5), use
Construction \ref{F_on_pairs} and Theorem
%and apply Theorems \ref{Ex_on_line},
%\ref{Exc_On_Rel_Aff_Line}, \ref{Inj_Etale_exc}, and
\ref{Surj_Etale_exc}
%respectively
(recall that $\cc F$ is stable).
\end{proof}

\section{Notation and agreements}
\label{Notation_Agreements}
{\it This Section is a copy paste of \cite[Section 3]{GP4}. We include it here
for convenience of the reader.
}
\begin{notn}
\label{Main} Given a morphism $a \in Fr_n(Y,X)$, we will write
$\langle a\rangle$ for the image of $1\cdot a$ in $\bb ZF_n(Y,X)$ and write
$[a]$ for the class of $\langle a\rangle$ in $\overline {\bb
ZF}_n(Y,X)$.

Given a morphism $a \in Fr_n(Y,X)$, we will write $Z_a$ for the
support of $a$ (it is a closed subset in $Y\times \bb A^n$ which
finite over $Y$ and determined by $a$ uniquely). Also, we will often
write
$$(\cc V_a,\varphi_a: \cc V_a \to \bb A^n; g_a: \cc V_a\to X) \ \text{or shorter} \ (\cc V_a,\varphi_a; g_a) $$
for a representative of the morphism $a$ (here $(\cc V_a, \rho: \cc
V_a \to Y\times \bb A^n, s: Z_a \hookrightarrow \cc V_a)$ is an
\'{e}tale neighborhood of $Z_a$ in $Y\times \bb A^n$).
\end{notn}

\begin{lem}
\label{Disjoint_Support} If the support $Z_a$ of an element $a=(\cc
V,\varphi; g) \in Fr_n(X,Y)$ is a disjoint union of $Z_1$ and
$Z_2$, then the element $a$ determines two elements $a_1$ and $a_2$
in $Fr_n(X,Y)$. Namely, $a_1=(\cc V-Z_2,\varphi|_{\cc V-Z_2};
g|_{\cc V-Z_2})$ and $a_2=(\cc V-Z_1,\varphi|_{\cc V-Z_1}; g|_{\cc
V-Z_1})$. Moreover, by the definition of $\bb ZF_n(X,Y)$ one has the
equality
$$\langle a \rangle= \langle a_1 \rangle + \langle a_2 \rangle$$
in $\bb ZF_n(X,Y)$.
\end{lem}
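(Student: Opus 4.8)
The plan is to unwind Definitions~\ref{frame_corr} and~\ref{stab}; the statement is almost entirely bookkeeping, the one point deserving care being that the assignment $a\mapsto(a_1,a_2)$ respects the equivalence relation on explicit framed correspondences.

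First I would record that $Z_1$ and $Z_2$ are closed in $\cc V$ and finite over $X$. Fix a representative $(\cc V,\varphi;g)$ of $a$ with \'etale structure map $\rho:\cc V\to X\times\bb A^n$ and section $s:Z_a\hookrightarrow\cc V$; by condition~(iii) of an \'etale neighborhood, $\rho^{-1}(Z_a)=s(Z_a)$, a closed subscheme of $\cc V$ which we identify with $Z_a$. In the decomposition $Z_a=Z_1\sqcup Z_2$ each $Z_i$ is open and closed in $Z_a$, hence closed in $\cc V$, and finite over $X$ as a closed subscheme of $Z_a$. Therefore $\cc V-Z_2$ and $\cc V-Z_1$ are open subschemes of $\cc V$; the composite $\cc V-Z_2\hookrightarrow\cc V\xrightarrow{\rho}X\times\bb A^n$ is \'etale, it contains $Z_1$ because $Z_1\cap Z_2=\emptyset$, and $\rho^{-1}(Z_1)=s(Z_1)$ (a point over $Z_1$ lies in $s(Z_a)$ but not in $s(Z_2)$, which maps onto $Z_2$), so $\cc V-Z_2$ is an \'etale neighborhood of $Z_1$ in $X\times\bb A^n$; finally $\bigcap_{i=1}^n\{\varphi_i=0\}\cap(\cc V-Z_2)=Z_a\cap(\cc V-Z_2)=Z_1$. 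Hence $a_1=(\cc V-Z_2,\varphi|_{\cc V-Z_2};g|_{\cc V-Z_2})$ is a genuine explicit framed correspondence of level $n$ from $X$ to $Y$ with support $Z_1$, and symmetrically $a_2=(\cc V-Z_1,\varphi|_{\cc V-Z_1};g|_{\cc V-Z_1})$ has support $Z_2$.

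Next I would verify that the classes of $a_1$ and $a_2$ in $Fr_n(X,Y)$ depend only on the class of $a$. If $(\cc V',\varphi';g')$ is an equivalent representative of $a$, then, identifying $Z_a$ with its images in $\cc V$ and $\cc V'$ via the respective sections, there is an open $V\subset\cc V\times_{X\times\bb A^n}\cc V'$ containing $Z_a$ on which $\varphi\circ pr=\varphi'\circ pr'$ and $g\circ pr=g'\circ pr'$. Then $V\cap pr^{-1}(\cc V-Z_2)\cap(pr')^{-1}(\cc V'-Z_2)$ is an open neighborhood of $Z_1$ inside $(\cc V-Z_2)\times_{X\times\bb A^n}(\cc V'-Z_2)$ on which the restricted framing functions and maps to $Y$ still agree, witnessing that $(\cc V-Z_2,\varphi|;g|)$ and $(\cc V'-Z_2,\varphi'|;g'|)$ are equivalent; the same argument applies to $a_2$. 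Thus $a\mapsto(a_1,a_2)$ descends to a well-defined map on $Fr_n(X,Y)$.

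Finally, the equality $\langle a\rangle=\langle a_1\rangle+\langle a_2\rangle$ in $\bb ZF_n(X,Y)$ is immediate from the definition $\bb ZF_n(X,Y)=\bb ZFr_n(X,Y)/A$ in Definition~\ref{stab}: taking $U=\cc V$, $Z=Z_1$, $Z'=Z_2$ in the displayed generator of $A$ yields exactly $1\cdot a-1\cdot a_1-1\cdot a_2\in A$, so the image $\langle a\rangle-\langle a_1\rangle-\langle a_2\rangle$ is zero in the quotient. The only real obstacle is the well-definedness step of the previous paragraph, and it is handled by the elementary observation that the distinguished open neighborhood witnessing the equivalence of two representatives of $a$, once intersected with the preimages of $\cc V-Z_2$ (resp.\ $\cc V-Z_1$), witnesses the equivalence of the corresponding representatives of $a_1$ (resp.\ $a_2$).
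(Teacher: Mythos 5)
Your proof is correct and is exactly the definition-unwinding the paper has in mind: the paper states Lemma~\ref{Disjoint_Support} without proof, regarding it as immediate from Definitions~\ref{frame_corr} and~\ref{stab} (cf.\ the remark in the Introduction that ``this is explained in Definition~\ref{stab}''). Your write-up correctly supplies the two points that actually need checking --- that $\cc V-Z_2$ (resp.\ $\cc V-Z_1$) is again an \'etale neighborhood of $Z_1$ (resp.\ $Z_2$) with the right vanishing locus, and that the construction respects the equivalence relation --- and the displayed identity is then literally a generator of the subgroup $A$ in Definition~\ref{stab}.
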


\begin{defs}
\label{Runs_Inside} Let $i_Y: Y'\hookrightarrow Y$ and $i_X:
X'\hookrightarrow X$ be open embeddings. Let $a\in Fr_n(Y,X)$. We
say that the restriction $a|_{Y'}$ of $a$ to $Y'$ runs inside $X'$,
if there is $a' \in Fr_n(Y',X')$ such that
\begin{equation}
\label{Eq_Runs_Inside}
i_X\circ a'=a\circ i_Y
\end{equation}
in $Fr_n(Y',X)$.

It is easy to see that if there is a morphism $a'$ satisfying
condition (\ref{Eq_Runs_Inside}), then it is unique. In this case
the pair $(a,a')$ is an element of $\bb ZF_n((Y,Y'),(X,X'))$. For
brevity we will write $\langle\langle a\rangle\rangle$
for $(a,a') \in \bb ZF_n((Y,Y'),(X,X'))$
and write $[[a]]$ to denote the class of $\langle\langle a\rangle\rangle$
in $\overline {\bb ZF}_n((Y,Y'),(X,X'))$.
\end{defs}

\begin{lem}
\label{Criteria} Let $i_Y: Y'\hookrightarrow Y$ and $i_X:
X'\hookrightarrow X$ be open embeddings. Let $a\in Fr_n(Y,X)$. Let
$Z_a \subset Y\times \bb A^n$ be the support of $a$. Set $Z'_a=Z_a
\cap Y' \times \bb A^n$. Then the following are equivalent:
\begin{itemize}
\item[(1)]
$g_a(Z'_a)\subset X'$;
\item[(2)]
the morphism $a|_{Y'}$ runs inside $X'$.
\end{itemize}
\end{lem}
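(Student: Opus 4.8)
The plan is to prove both implications by unwinding, on one convenient representative of $a$, the definition of composition (Definition~\ref{cat_Fr_+}), the formula for $f^*$, and the equivalence relation on explicit framed correspondences (Definition~\ref{frame_corr}); the equivalence of (1) and (2) will then come down to recognizing that both are the same open condition on that representative.

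First I would fix a representative $(\cc V_a,\varphi_a;g_a)$ of $a$ with its \'etale neighborhood $\rho_a\colon \cc V_a\to Y\times\bb A^n$ of $Z_a$, and record two computations. On the one hand, $a\circ i_Y=i_Y^*(a)$ is represented by $(\cc V_a\times_Y Y',\varphi_a\circ pr;g_a\circ pr)$, whose \'etale neighborhood $\cc V_a\times_Y Y'\to Y'\times\bb A^n$ is a neighborhood of $Z_a\cap(Y'\times\bb A^n)=Z'_a$; in particular $a\circ i_Y$ has support $Z'_a$, and $(g_a\circ pr)|_{Z'_a}=g_a|_{Z'_a}$, which is independent of the representative (all representatives agreeing near the support), so $g_a(Z'_a)$ is a well-defined subset of $X$. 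On the other hand, for any $a'\in Fr_n(Y',X')$, composing with the level-$0$ honest morphism $i_X$ only post-composes the $g$-part, so $i_X\circ a'$ is represented by $(\cc V_{a'},\varphi_{a'};i_X\circ g_{a'})$ and has support $Z_{a'}$.

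For (2)$\Rightarrow$(1): given $a'$ with $i_X\circ a'=a\circ i_Y$ in $Fr_n(Y',X)$, comparing the two computations forces $Z_{a'}=Z'_a$ and, near this common support, $i_X\circ g_{a'}$ agrees with $g_a\circ pr$; evaluating on $Z'_a$ gives $g_a(Z'_a)=i_X(g_{a'}(Z'_a))\subset X'$. For (1)$\Rightarrow$(2): assuming $g_a(Z'_a)\subset X'$, I would shrink the \'etale neighborhood to the open set $\cc V'':=(g_a\circ pr)^{-1}(X')\subset \cc V_a\times_Y Y'$, which still contains $Z'_a$, hence is still an \'etale neighborhood of $Z'_a$ on which the common zero locus of $\varphi_a\circ pr$ is $Z'_a$ and on which $g_a\circ pr$ factors through $X'$; then $a':=(\cc V'',(\varphi_a\circ pr)|_{\cc V''};(g_a\circ pr)|_{\cc V''}\colon\cc V''\to X')$ lies in $Fr_n(Y',X')$, and $i_X\circ a'$ is literally the restriction of the representative $(\cc V_a\times_Y Y',\varphi_a\circ pr;g_a\circ pr)$ of $a\circ i_Y$ to the smaller neighborhood $\cc V''$ of $Z'_a$, hence equivalent to it, so $i_X\circ a'=a\circ i_Y$ and $a|_{Y'}$ runs inside $X'$ (its uniqueness is then automatic).

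The only delicate point is the bookkeeping: reading off the supports and the $g$-parts of $i_Y^*(a)$ and of $i_X\circ a'$ correctly from Definition~\ref{cat_Fr_+}, and using that passing to any open \'etale neighborhood still containing the support does not change the class of an explicit framed correspondence. I do not expect a genuine obstacle here — the statement is purely formal and does not use the standing assumption that $k$ be infinite.
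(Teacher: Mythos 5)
Your proof is correct and follows essentially the same route as the paper: for $(1)\Rightarrow(2)$ the paper also shrinks the \'etale neighborhood to $p_Y^{-1}(Y'\times\bb A^n)\cap g^{-1}(X')$ (which is your $\cc V''$), and for $(2)\Rightarrow(1)$ it likewise reads off that the support of $a'$ must be $Z'_a$ with $g_a|_{Z'_a}=g'|_{Z'_a}$, whence $g_a(Z'_a)\subset X'$. No gaps.
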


\begin{proof}
$(1) \Rightarrow (2)$. Set $\cc V'=p^{-1}_Y \cap g^{-1}(X')$, where
$p_Y=pr_Y\circ \rho_a: \cc V \to Y\times \bb A^n$. Then $a':=(\cc
V', \varphi|_{\cc V'}); g|_{\cc V'})\in Fr_n(Y',X')$ satisfies
condition (\ref{Eq_Runs_Inside}).

$(2) \Rightarrow (1)$. If $a|_{Y'}$ runs inside $X'$, then for some
$a'=(\cc V', \varphi'; g') \in Fr_n(Y',X'))$ equality
(\ref{Eq_Runs_Inside}) holds. In this case the support $Z'$ of $a'$
must coincide with $Z'_a=Z_a \cap Y' \times \bb A^n$ and
$g_a|_{Z'}=g'|_{Z'}$. Since $g'(Z')$ is a subset of $X'$, then
$g_a(Z'_a)=g_a(Z')\subset X'$.
\end{proof}

\begin{cor}
\label{Critaria_for_H} Let $i_Y: Y'\hookrightarrow Y$ and $i_X:
X'\hookrightarrow X$ be open embeddings. Let $h_{\theta}=(\cc
V_{\theta},\varphi_{\theta}; g_{\theta})\in Fr_n(\bb A^1\times
Y,X)$. Suppose $Z_{\theta}$, the support of $h_{\theta}$, is such
that for $Z'_{\theta}:=Z_{\theta}\cap \bb A^1\times Y' \times \bb
A^n$ one has $g_{\theta}(Z'_{\theta})\subset X'$. Then there are
morphisms $\langle\langle h_{\theta} \rangle\rangle \in\bb ZF_n(\bb
A^1\times (Y,Y'),(X,X'))$, $\langle\langle h_0 \rangle\rangle \in\bb
ZF_n((Y,Y'),(X,X'))$, $\langle\langle h_1 \rangle\rangle \in\bb
ZF_n((Y,Y'),(X,X'))$ and one has an obvious equality
$$[[h_0]=[[h_1]]$$
in $\overline {\bb ZF}_n((Y,Y'),(X,X'))$.
\end{cor}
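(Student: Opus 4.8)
The plan is to reduce Corollary~\ref{Critaria_for_H} to an immediate application of Lemma~\ref{Criteria} applied twice — once to $h_\theta$ over the base $\mathbb A^1\times Y$, and once to $h_0$ and $h_1$ over $Y$ — together with the already-defined $\mathbb A^1$-homotopy identification in $\overline{\bb ZF}_*$. First I would observe that the hypothesis $g_\theta(Z'_\theta)\subset X'$ is exactly condition~(1) of Lemma~\ref{Criteria} for the pair of open embeddings $\bb A^1\times Y'\hookrightarrow \bb A^1\times Y$ and $X'\hookrightarrow X$; hence $h_\theta|_{\bb A^1\times Y'}$ runs inside $X'$, so by Definition~\ref{Runs_Inside} the pair $\langle\langle h_\theta\rangle\rangle = (h_\theta, h'_\theta)$ is a well-defined element of $\bb ZF_n(\bb A^1\times(Y,Y'),(X,X'))$.

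Next I would produce $\langle\langle h_0\rangle\rangle$ and $\langle\langle h_1\rangle\rangle$ by restricting along the two sections $i_0,i_1\colon Y\to \bb A^1\times Y$. The key point is that if a correspondence over $\bb A^1\times Y$ runs inside $X'$ after restriction to $\bb A^1\times Y'$, then so does each of its fibre restrictions $h_0 = i_0^*h_\theta$ and $h_1 = i_1^*h_\theta$ after restriction to $Y'$: one checks that the support of $i_t^*h_\theta$ is $Z_\theta\cap(\{t\}\times Y\times\bb A^n)$, so $Z'_{i_t^*h_\theta} = Z'_\theta\cap(\{t\}\times Y'\times\bb A^n)$, and $g_\theta$ still maps this into $X'$. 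Thus Lemma~\ref{Criteria} again gives $\langle\langle h_0\rangle\rangle,\langle\langle h_1\rangle\rangle\in\bb ZF_n((Y,Y'),(X,X'))$, and in fact $i_t^*\langle\langle h_\theta\rangle\rangle = \langle\langle h_t\rangle\rangle$ for $t=0,1$ by the uniqueness clause in Definition~\ref{Runs_Inside}, since $i_X\circ i_t^*h'_\theta = i_t^*(i_X\circ h'_\theta) = i_t^*(h_\theta\circ i_{\bb A^1\times Y'}) = h_t\circ i_{Y'}$.

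Finally, the equality $[[h_0]] = [[h_1]]$ in $\overline{\bb ZF}_n((Y,Y'),(X,X'))$ follows from the very definition of $\overline{\bb ZF}^{pr}_*$ as the cokernel of $i_0^*-i_1^*\colon \bb ZF^{pr}_*(\bb A^1\times(Y,Y'),(X,X'))\to\bb ZF^{pr}_*((Y,Y'),(X,X'))$ (Definition~\ref{bar_ZF}): the element $\langle\langle h_\theta\rangle\rangle$ is a witness, and its image under $i_0^*-i_1^*$ is $\langle\langle h_0\rangle\rangle - \langle\langle h_1\rangle\rangle$, which therefore vanishes in the quotient $\overline{\bb ZF}_n$. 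The only mild subtlety — and the one place requiring care rather than a one-line citation — is the bookkeeping that the section restriction $i_t^*$ on the level of explicit framed correspondences is compatible with the "runs inside" structure, i.e.\ that the $a'$-components restrict correctly; but this is forced by uniqueness of $a'$ as just noted, so there is no real obstacle. (The typo "$[[h_0]=[[h_1]]$" in the displayed line should of course read $[[h_0]]=[[h_1]]$.)
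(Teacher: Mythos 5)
Your argument is correct and is exactly the reasoning the paper leaves implicit: the corollary is stated with no proof at all, the equality $[[h_0]]=[[h_1]]$ being called ``obvious.'' Your two applications of Lemma~\ref{Criteria} (to $h_\theta$ over $\bb A^1\times Y$ and to its fibre restrictions $h_0,h_1$), the compatibility $i_t^*\langle\langle h_\theta\rangle\rangle = \langle\langle h_t\rangle\rangle$ forced by the uniqueness clause of Definition~\ref{Runs_Inside}, and the final step invoking the cokernel description of $\overline{\bb ZF}^{pr}_*$ from Definition~\ref{bar_ZF} are precisely what makes the assertion ``obvious,'' so this is the same (intended) approach spelled out.
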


\begin{lem}[A disconnected support case]
\label{Criteria_for_pairs} Let $i_Y: Y'\hookrightarrow Y$ and $i_X:
X'\hookrightarrow X$ be open embeddings. Let $a\in Fr_n(Y,X)$ and
let $Z_a \subset Y\times \bb A^n$ be the support of $a$. Set
$Z'_a=Z_a \cap Y' \times \bb A^n$. Suppose that $Z_a=Z_{a,1} \sqcup
Z_{a,2}$. For $i=1,2$ set $\cc V_i=\cc V_a-Z_{a,j}$ with $j\in
\{1,2\}$ and $j\neq i$. Also set $\varphi_i=\varphi_a|_{\cc V_i}$
and $g_i=g_a|_{\cc V_i}$. Suppose $a|_{Y'}$ runs inside $X'$, then
\begin{itemize}
\item[(1)]
for each $i=1,2$ the morphism $a_i:=(\cc V_i, \varphi_i; g_i)$ is such that $a_i|_{Y'}$ runs inside $X'$;
\item[(2)]
$\langle\langle a \rangle\rangle = \langle\langle a_1 \rangle\rangle + \langle\langle a_2 \rangle\rangle$
in $\bb ZF_n((Y,Y'),(X,X'))$.
\end{itemize}

\end{lem}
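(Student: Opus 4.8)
The plan is to derive both statements from three facts already at hand: the support criterion of Lemma~\ref{Criteria}, the additivity for disconnected supports of Lemma~\ref{Disjoint_Support}, and the uniqueness of the ``runs inside'' restriction recorded in Definition~\ref{Runs_Inside}.

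For (1), recall that by Lemma~\ref{Criteria} the hypothesis ``$a|_{Y'}$ runs inside $X'$'' is equivalent to $g_a(Z'_a)\subset X'$. By Lemma~\ref{Disjoint_Support} the correspondence $a_i=(\cc V_i,\varphi_i;g_i)$ has support $Z_{a,i}$ and $g_i$ is a restriction of $g_a$; setting $Z'_{a,i}:=Z_{a,i}\cap Y'\times\bb A^n$, the decomposition $Z_a=Z_{a,1}\sqcup Z_{a,2}$ gives $Z'_a=Z'_{a,1}\sqcup Z'_{a,2}$. Hence $g_i(Z'_{a,i})=g_a(Z'_{a,i})\subset g_a(Z'_a)\subset X'$, so Lemma~\ref{Criteria} applied to $a_i$ gives that $a_i|_{Y'}$ runs inside $X'$. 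Write $a'\in Fr_n(Y',X')$ for the unique restriction of $a$ running inside $X'$ and $a'_i$ for the unique restriction of $a_i$; then $\langle\langle a\rangle\rangle$ and $\langle\langle a_i\rangle\rangle$ are the pairs $(\langle a\rangle,\langle a'\rangle)$ and $(\langle a_i\rangle,\langle a'_i\rangle)$ in $\bb ZF_n((Y,Y'),(X,X'))$.

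Since addition in $\bb ZF_n((Y,Y'),(X,X'))$ is componentwise, (2) amounts to two equalities. In the first coordinate, $\langle a\rangle=\langle a_1\rangle+\langle a_2\rangle$ in $\bb ZF_n(Y,X)$ is exactly Lemma~\ref{Disjoint_Support} for $a$. In the second coordinate we need $\langle a'\rangle=\langle a'_1\rangle+\langle a'_2\rangle$ in $\bb ZF_n(Y',X')$. Here the support of $a'$ equals $Z'_a=Z'_{a,1}\sqcup Z'_{a,2}$, since it coincides with the support of $a\circ i_Y=i_X\circ a'$, which is the pullback $Z_a\cap Y'\times\bb A^n$; thus Lemma~\ref{Disjoint_Support} applied to $a'$ gives $\langle a'\rangle=\langle(a')_1\rangle+\langle(a')_2\rangle$, where $(a')_i$ is the piece of $a'$ with support $Z'_{a,i}$. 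It then remains to identify $(a')_i$ with $a'_i$: both are elements of $Fr_n(Y',X')$ with support $Z'_{a,i}$, so by the uniqueness clause of Definition~\ref{Runs_Inside} it suffices to check $i_X\circ(a')_i=a_i\circ i_Y$ in $Fr_n(Y',X)$, and this is obtained by deleting the clopen piece with index $j\neq i$ from the \'etale neighborhoods occurring in the already known identity $i_X\circ a'=a\circ i_Y$.

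The one point that is not purely formal is this last compatibility: deleting a clopen part of the support commutes with pullback along the open immersion $i_Y$ and with postcomposition by the level-zero correspondence $i_X$. This is a routine verification on representatives --- pullback along $i_Y$ replaces the \'etale neighborhood $\cc V$ by $\cc V\times_Y Y'$ and the support by its preimage, postcomposition by $i_X$ changes neither $\cc V$ nor the support, and in either case removing $s(Z_{a,j})$ (or its preimage) is unaffected. I expect this bookkeeping to be the only, and rather mild, obstacle; everything else is a direct application of Lemmas~\ref{Criteria} and~\ref{Disjoint_Support}.
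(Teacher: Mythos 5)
The paper (following its model \cite[Section 3]{GP4}) states Lemma~\ref{Criteria_for_pairs} without an explicit proof, so there is no in-text argument to compare against. That said, your proof is correct and is the natural argument given the surrounding machinery: the support criterion of Lemma~\ref{Criteria} yields item (1); for item (2) you correctly observe that addition in $\bb ZF_n((Y,Y'),(X,X'))$ is componentwise by Definition~\ref{bar_ZF}, the first coordinate identity is Lemma~\ref{Disjoint_Support} for $a$, and the second coordinate follows from Lemma~\ref{Disjoint_Support} applied to $a'$ once one identifies the two pieces $(a')_i$ with $a'_i$ via the uniqueness clause of Definition~\ref{Runs_Inside}. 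Your final compatibility check --- that removing a clopen piece of the support commutes with pullback along the open immersion $i_Y$ and with postcomposition by the level-zero morphism $i_X$ --- is exactly the right thing to verify on representatives, and you have correctly flagged why it is routine: $i_X$ leaves the \'etale neighborhood untouched, $i_Y$ just replaces $\cc V_a$ by $\cc V_a\times_Y Y'$ and intersects the support with $Y'\times\bb A^n$, and both operations are compatible with deleting $Z_{a,j}$ (respectively its preimage) on the common open neighborhood over which the two representatives of $i_X\circ a'=a\circ i_Y$ agree.
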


\section{Surjectivity of the \'{e}tale excision}
\label{section_surjectivity}
Let
\begin{equation}\label{Nisn_square}
\xymatrix{V'\ar[r]\ar[d]&X'\ar^{\Pi}[d]\\
               V\ar[r]&X}
\end{equation}
be an elementary distinguished square with affine $k$-smooth $X$ and $X'$.
Let $S=X-V$ and $S'=X'-V'$ be closed subschemes equipped with reduced structures.
Let $x\in S$ and $x' \in S'$ be two closed points such that $\Pi(x')=x$.
Let
$U=Spec(\cc O_{X,x})$
and
$U'=Spec(\cc O_{X',x'})$.
Let $\pi_U: U' \to U$ be the morphism induced by $\Pi$.
To prove Theorem \ref{Surj_Etale_exc}, it suffices to find morphisms
$a\in \bb ZF_N((U,U-S)),(X',X'-S'))$ and $b_G\in \bb
ZF_N((U',U'-S')),(X'-S',X'-S'))$ such that
\begin{equation}
\label{Surjectivity_Part_Details}
[[a]]\circ [[\pi_U]]-[[j]]\circ [[b_G]] =[[can']]\circ [[\sigma^N_U]]
\end{equation}
in $\overline {\bb ZF}_N(U',U'-S')),(X',X'-S'))$. Here $j: (X'-S',X'-S') \to
(X',X'-S')$ and $can': (U',U'-S') \to (X',X'-S')$ are inclusions.
%In this section we do some preparations to
%construct the desired morphisms
%$a\in \bb ZF_N((U,U-S)),(X',X'-S'))$ and $b_G\in \bb
%ZF_N((U,U-S)),(X-S,X-S))$ in Section~\ref{Proof_of_Etale_Injectivity}
%satisfying~\eqref{Injectivity_Part_Details}.
Shrinking $X$ and $X'$ such that $x$ is in the shrunk $X$ and $x'$ is in the shrunk $X'$ one can find a commutative diagram of the form
(see the diagram (\ref{SquareDiagram_new_2}))
\begin{equation}
\label{SquareDiagram_new}
    \xymatrix{
    X \ar[drrrr]_{q} && X'\ar[ll]_{\Pi} \ar[drr]^{q'}\ar[rr]^{j'}&& \overline X'\ar[d]_{\overline q'} && X'_{\infty}\ar[ll]_{i'}\ar[lld]_{q'_{\infty}} &\\
     &&&& B  &\\,   }
\end{equation}
with $B$-smooth affine $B$-schemes $X'$and $X$,
where $B$ is an affine open in $\bb P^{n-1}$, $q: X\to B$ is an almost elementary fibration
in the sense of
\cite[Definition 2.1]{PSV}
such that $\omega_{B/k}\cong \cc O_B$, $\omega_{X/k}\cong \cc O_X$,
$q'_{\infty}: X'_{\infty}\to B$ is finite surjective, $i'$ is a closed embedding of schemes, $j'$ is an open embedding of schemes,
the closed subscheme $X'_{\infty}$ of the scheme $\bar X'$
{\it is an ample
Cartier divisor in
}
$\bar X'$
and $X'=\bar X'-X'_{\infty}$.
Also, $S'\subset X'$, $S\subset X$ and $\Pi|_{S'}: S'\to S$ is a scheme isomorphism and $S'$ is finite over $B$.
Particularly, $S'$ is closed in $\bar X'$ and $S'\cap X'_{\infty}=\emptyset$.

Since $\Pi|_{S'}: S'\to S$ is a scheme isomorphism, hence $q|_{S}: S\to B$ is finite.
The morphism $q': X'\to X$ is a smooth morphism of relative dimention one, since
$q$ has the same properties and $\Pi$ is \'{e}tale.

%\begin{notn}
%Set $X'_{\infty}=Y'\sqcup (\bar {\Pi})^{-1}(X_{\infty})\subset \bar X'$. It is a Cartier divisor on $\bar X'$.
%One has an equality
%$X'=\bar X' - X'_{\infty}$.
%Note that $X'_{\infty}$ is finite over $B$.
%We know already that $S'$ is finite over $B$, closed in $\bar X'$ and
%$S'\cap (Y'\sqcup (\bar {\Pi})^{-1}(X_{\infty}))=\emptyset$.
%\end{notn}

\begin{rem}
\label{Stable_Normal_Bundle}
Let $q: X\to B$ be the almost elementary fibration from
%Remark \ref{Elementary_fibr},
the diagram (\ref{SquareDiagram_new}),
then
$\Omega^1_{X/B}\cong \cc O_X$.
In fact, $\omega_{X/k}\cong q^*(\omega_{B/k})\otimes \omega_{X/B}$.
Thus $\omega_{X/B}\cong \cc O_X$. Since $X/B$ is a smooth relative curve,
then $\Omega^1_{X/B}=\omega_{X/B}\cong \cc O_X$. Since
$\Pi: X'\to X$ is \'{e}tale, hence $\Omega^1_{X'/B}\cong \cc O_{X'}$.

Since $X'$ is an affine $B$-scheme and $B$ is affine there is a closed embedding
$in: X'\hookrightarrow B\times \bb A^N$
%is a closed
of $B$-schemes. Then one has
$[\cc N(in)]=(N-1)[\cc O_{X'}]$ in $K_0(X')$, where $\cc N(in)$ is the normal bundle to $X'$
for the imbedding $in$.

Thus by increasing the integer $N$, we may and will assume that the normal
bundle $\cc N(in)$ is isomorphic to the trivial bundle $\cc
O^{N-1}_{X'}$.
\end{rem}
Using the indicated properties
%(i)--(iv)
of the diagram
(\ref{SquareDiagram_new}),
Remark \ref{Stable_Normal_Bundle}
and
repeating arguments from the proof of
\cite[Lemma 8.2]{GP4}
we get the following
\begin{prop}
\label{Framing_Of_X'}
Let $B$, $X'$ and %(edited) X is changed to X'
$q: X'\to B$ be the schemes and the morphism defined just above the diagram
(\ref{SquareDiagram_new}). Then there are an
integer $N\geq 0$, a closed embedding $X' \hookrightarrow B\times \bb
A^N$ of $B$-schemes, an \'{e}tale affine neighborhood $(\cc V, \rho:
\cc V \to B\times \bb A^N, s: X' \hookrightarrow \cc V)$ of $X'$ in
$B\times \bb A^N$, functions $\varphi_1,...,\varphi_{N-1} \in k[\cc
V]$ and a morphism $r: \cc V \to X'$ such that:
\begin{itemize}
\item[(i)]
the functions $\varphi_1,...,\varphi_{N-1}$ generate the ideal
$I_{s(X')}$ in $k[\cc V]$ defining the closed subscheme $s(X')$ of
$\cc V$;
\item[(ii)]
$r\circ s = id_{X'}$;
\item[(iii)]
the morphism $r$ is a $B$-scheme morphism if $\cc V$ is regarded as
a $B$-scheme via the morphism $pr_U\circ \rho$, and $X'$ is regarded
as a $B$-scheme via the morphism $q'$.
\end{itemize}
\end{prop}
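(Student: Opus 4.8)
The plan is to follow the pattern of the proof of \cite[Lemma 8.2]{GP4}, adapting it to the present situation where we only know $\Omega^1_{X'/B}\cong\cc O_{X'}$ (rather than having a preferred framing coming from characteristic-$\neq 2$ arguments). First I would fix, once and for all, a closed embedding $in\colon X'\hookrightarrow B\times\bb A^N$ of $B$-schemes with $N$ large enough that, by Remark \ref{Stable_Normal_Bundle}, the normal bundle $\cc N(in)$ is isomorphic to the trivial bundle $\cc O^{N-1}_{X'}$; this is where the characteristic-free input $\Omega^1_{X'/B}\cong\cc O_{X'}$ (together with the stable triviality in $K_0(X')$) is used. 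Fixing such a trivialization $\cc N(in)\cong\cc O^{N-1}_{X'}$ is exactly the datum that, after passing to a suitable étale neighborhood, will produce the regular sequence $\varphi_1,\dots,\varphi_{N-1}$ cutting out $X'$.

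Next I would construct the étale affine neighborhood $(\cc V,\rho,s)$ of $X'$ in $B\times\bb A^N$ together with the retraction $r$. For this I would use the standard ``tubular neighborhood'' construction over $B$: since $q'\colon X'\to B$ is smooth affine, one can find an étale $B$-morphism from an open subscheme of the normal bundle (or, more simply, from an étale affine neighborhood $\cc V$ of $s(X')$ in $B\times\bb A^N$) retracting onto $X'$. Concretely one takes $\cc V$ small enough that the projection $\cc V\to X'$ extending $\id_{X'}$ exists and is a $B$-morphism when $\cc V$ carries the $B$-structure via $pr_B\circ\rho$; this gives properties (ii) and (iii). One should shrink $\cc V$ so that it remains affine and étale over $B\times\bb A^N$.

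Then I would produce the functions $\varphi_1,\dots,\varphi_{N-1}$ of property (i). On $\cc V$ the ideal $I_{s(X')}$ is locally generated by $N$ functions (the restrictions of the affine coordinates transverse to $X'$), but the chosen trivialization $\cc N(in)\cong\cc O^{N-1}_{X'}$ identifies $I_{s(X')}/I_{s(X')}^2$ with a free module of rank $N-1$; after possibly shrinking $\cc V$ further around $X'$ one lifts a basis of $I_{s(X')}/I_{s(X')}^2$ to functions $\varphi_1,\dots,\varphi_{N-1}\in k[\cc V]$ which, by Nakayama applied along the closed subscheme $s(X')$ (and using that $\cc V$ is étale, hence the ideal is generated by the same number of elements in a neighborhood), generate $I_{s(X')}$. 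This is the step I expect to be the main obstacle: one must arrange simultaneously that the $\varphi_i$ generate $I_{s(X')}$ globally on an affine étale $\cc V$ \emph{and} that $r$ is still a $B$-morphism on that same $\cc V$; this requires care about which shrinkings are compatible, and it is precisely here that one copies the delicate neighborhood-shrinking argument of \cite[Lemma 8.2]{GP4} verbatim, the point being that that argument never used $\mathrm{char}\,k\neq 2$ but only the stable triviality of the relative cotangent sheaf, which we have established in Remark \ref{Stable_Normal_Bundle}.

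Finally, with $\cc V$, $s$, $\rho$, $r$ and $\varphi_1,\dots,\varphi_{N-1}$ in hand, properties (i)--(iii) are exactly the assertions to be checked, and the conclusion of the proposition follows. I would not belabour the routine verifications that $\rho$ is étale and $\cc V$ affine, nor the precise bookkeeping of which open subscheme of $B\times\bb A^N$ one lands in; these are identical to the corresponding passages of \cite[Lemma 8.2]{GP4}, and the only genuinely new remark is the replacement of the characteristic hypothesis by the computation $\Omega^1_{X'/B}\cong\cc O_{X'}$ together with increasing $N$ to trivialize $\cc N(in)$.
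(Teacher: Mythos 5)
Your proposal is correct and follows essentially the same approach as the paper, which proves this proposition simply by invoking Remark~\ref{Stable_Normal_Bundle} and ``repeating arguments from the proof of \cite[Lemma 8.2]{GP4}'' (reproduced in this text as Lemma~\ref{Framing_of_X_script}). You correctly identify that the only substantive input is $\Omega^1_{X'/B}\cong\cc O_{X'}$, obtained from triviality of $\omega_{X/k}$ and $\omega_{B/k}$ plus smoothness of $\bar q'$, which forces $\cc N(in)$ to be stably trivial, and that increasing $N$ then trivializes the normal bundle and yields $\varphi_1,\dots,\varphi_{N-1}$.

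One remark on your flagged ``main obstacle'': in the actual construction there is no tension between arranging the generation of $I_{s(X')}$ and keeping $r$ a $B$-morphism. The retraction $r$ is defined as the restriction of the bundle projection $q_{\cc X}\colon\cc N\to X'$ (pulled back to a suitable open affine subscheme of the total space of the normal bundle), and this is automatically a $B$-morphism after any further shrinking; the only shrinkings involved are (a) to make $\rho'=in\circ q_{\cc X}+t$ \'{e}tale, and (b) to split $(\rho')^{-1}(in(X'))$ as $s_0(X')\sqcup Y$ and discard $Y$. Both are harmless for (iii). Also, one small inaccuracy: $\mathrm{char}\,k\neq 2$ was never used in the analogous statement of \cite{GP4}; that hypothesis enters only through the ``straightening'' Theorem~\ref{Spriamlenie_1} and its variants, so nothing is being ``replaced'' at the level of this proposition. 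These are cosmetic points that do not affect correctness.
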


Let $b=q(x)\in B$. It is a closed point of $B$. Let $S'_b$ and $S_b$
be the fibres over the point $b$ of the morphisms $q'|_{S'}: S'\to B$ and $q|_S: S\to B$
respectively. The schemes $S'_b$ and $S_b$ consists of finitely many closed points since
$S'$ and $S$ are finite over B. Let $s'_0,...,s'_n$ be the set of points of $S'_b$ (here $s'_0=x'$).
Let $s_i=\Pi(s'_i)\in S_b$. Since $\Pi|_{S'}: S'\to S$ is a scheme isomorphism,
hence $s_0,...,s_n$ be the set of points of $S_b$ and $s_0=x$.
\begin{defs}\label{W_and_W'}
Set $W'=Spec(\cc O_{X',\{s'_0,...,s'_n\}})$, $W=Spec(\cc O_{X,\{s_0,...,s_n\}})$.
%Let $x\in S$, $x'\in S'$ be such that $\Pi(x')=x$.
%Set $U=Spec(\cc O_{X,x})$,
%$\cc X=U\times_B X$.
%$\cc D=U\times_B D$.
%There is an obvious morphism $\Delta=(id,can): U\to U\times_B X$. It
%is a section of the projection $p_U: U\times_B X \to U$. Let $p_X:
%U\times_B X \to X$ be the projection onto $X$.
Let $\pi_W: W'\to W$ be
the restriction of $\Pi$ to $W'$.
Note that $S'$ (respectively $S$) is a closed subset of $W'$ (respectively of $W$).
\end{defs}

\begin{notn}
\label{X'_script} In what follows we will write
%$U\times X$ to denote
%$U\times_B X$,
%$U\times D$ for $U\times_B D$,
$W\times X'$  to denote $W\times_B X'$, $W'\times X'$
%{\bf to denote
%$W'\times_B X'$, $S'_b\times S'_b$
%to denote
%$S'_b\times_B S'_b$
etc.
Here $X'$ is regarded as a $B$-scheme via the
morphism $q'=q\circ \Pi$.
We will write $\Delta: W'\hookrightarrow W'\times X'$ for the diagonal embedding.

%{\bf The scheme $S'_b\times S'_b$ is a closed subscheme in $W'\times X'$. Its support is a finite subset $T$ of
%closed points of \ $W'\times X'$. Clearly,
%}
%$$T=Diag\sqcup (T-Diag),$$
%where $Diag=\Delta(\{s'_0,...,s'_n\})$ is the set of closed points of the closed subscheme
%$\Delta(W') \subseteq W'\times X'$.
If $J\subset k[W'\times X']$ is the ideal defining the closed subset $\Delta(W')$ in $W'\times X':=W'\times_B X'$,
then set $\Delta^{(2)}(W')=Spec(k[W'\times X']/J^2)$.

The ideal $J$ is locally principal. Thus, the ideal $J^2$ is locally principal too.
\end{notn}

\begin{lem}\label{e'}
Under the notation of Proposition \ref{Framing_Of_X'} the following is true:
\begin{itemize}
\item[(i)]
For any $e'\in J$ the element $(id_{W'}\times r)^*(e')\in k[W'\times \cc V]$ is in the ideal
$I_{(id\times s)(W')}\subset k[W'\times \cc V]$ defining the closed subset $(id_{W'},s)(W')$ of $W'\times \cc V$;
\item[(ii)]
Let $pr_{\cc V}: W'\times \cc V\to \cc V$ be the projection. Then for any $i=1,...,N-1$ for
$\phi'_i:=pr^*_{\cc V}{\phi_i}$ one has: $\phi'_i\in I_{W'\times X'}\subset I_{(id\times s)(W')}$;
\item[(iii)]
%Let $\bar {\phi}'_i=\phi'_i \ \text{mod} \ I^2_{(id\times s)(W')}$.
There exists an element
$e'\in J\subset k[W'\times X']$ such that $e' \ \text{mod}\ J^2$
is a free generator of the
$k[W']$-module $J/J^2$.
For any such element
$e'\in J\subset k[W'\times X']$
the elements
\begin{equation}\label{basis_e_prime_phi}
(id_{W'}\times r)^*(e'), {\phi}'_1,...,{\phi}'_{N-1} \ \text{mod} \ I^2_{(id\times s)(W')}
\end{equation}
form a free basis of the $k[W']$-module $I_{(id\times s)(W')}/I^2_{(id\times s)(W')}$;
\item[(iv)]
Let
$t'_i=t_i-p^*_{W'}(t_i|_{W'})\in k[W'\times \bb A^N]$.
Then
\begin{equation}\label{basis_t_prime}
(id\times \rho)^*(t'_1),...,(id\times \rho)^*(t'_N) \ \text{mod} \ I^2_{(id\times s)(W')}
\end{equation}
is a free basis of the $k[W']$-module
$I_{(id\times s)(W')}/I^2_{(id\times s)(W')}$;
\item[(v)]
For any $e'\in J\subset k[W'\times X']$
as in the item (iii) of this lemma let $A(e')\in GL_N(k[W'])$ be a matrix transforming
the free basis (\ref{basis_t_prime})
%$$(id_{W'}\times r)^*(e'), {\phi}'_1,...,{\phi}'_{N-1} \ \text{mod} \ I^2_{(id\times s)(W')}$$
of the $k[W']$-module $I_{(id\times s)(W')}/I^2_{(id\times s)(W')}$
to the free basis (\ref{basis_e_prime_phi})
of the same $k[W']$-module.
%For any $e'\in J\subset k[W'\times X']$
%from the item (iii) of this lemma let $A(e')\in GL_N(k[W'])$ be such a matrix, which transforms
%the free basis
%$\bar {e}',\bar {\phi}'_1,...,\bar {\phi}'_{N-1} \ \text{mod} \ I^2_{(id\times s)(W')}$
%to the free basis
%$(id\times \rho)^*(t'_1),...,(id\times \rho)^*t'_N$.
Then there exists $e'\in J$ as in the item (iii) of this lemma such that $det(A(e'))=1$.
\end{itemize}
\end{lem}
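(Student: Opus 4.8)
The plan is to establish the five items essentially in the order stated, since (iii)–(v) build on (i)–(ii) and on the elementary commutative algebra of the conormal modules involved. First, for (i), observe that $\Delta(W')\subset W'\times X'$ is cut out by $J$, and the morphism $(\id_{W'}\times r)\colon W'\times\cc V\to W'\times X'$ sends $(\id_{W'},s)(W')$ into $\Delta(W')$ because $r\circ s=\id_{X'}$ by Proposition \ref{Framing_Of_X'}(ii). Hence any $e'\in J$ pulls back into the ideal $I_{(\id\times s)(W')}$ defining the preimage, which contains $(\id_{W'},s)(W')$; this is a one-line functoriality argument. For (ii): the functions $\phi_i\in k[\cc V]$ generate $I_{s(X')}$ by Proposition \ref{Framing_Of_X'}(i), so their pullbacks $\phi'_i=pr^*_{\cc V}\phi_i$ generate the ideal $I_{W'\times X'}\subset k[W'\times\cc V]$ of $(\id_{W'}\times s)(W'\times X')$, and since $(\id_{W'},s)(W')\subset (\id_{W'}\times s)(W'\times X')$ we get $I_{W'\times X'}\subset I_{(\id\times s)(W')}$; again just a containment of vanishing loci.

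The core is (iii)–(v), all about rank-$N$ conormal modules over $k[W']$. The scheme $W'$ is semilocal (the spectrum of $\cc O_{X',\{s'_0,\dots,s'_n\}}$), so projective $k[W']$-modules are free, and $I_{(\id\times s)(W')}/I^2_{(\id\times s)(W')}$ is the conormal module of the regular embedding $(\id_{W'},s)\colon W'\hookrightarrow W'\times\cc V$, which has rank $N$ (since $\cc V$ is étale over $B\times\bb A^N$ and $W'\times\cc V\to W'$ has relative dimension $N$). For (iv), the functions $t'_i=t_i-p^*_{W'}(t_i|_{W'})$ vanish on the diagonal-type section and their pullbacks along $\id\times\rho$ visibly cut out $(\id_{W'},s)(W')$ inside $W'\times\cc V$ after shrinking, so they give a free basis mod $I^2$ — this is the étale-local triviality of the relative cotangent sheaf $\Omega^1_{W'\times\bb A^N/W'}$ restricted to the section. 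For (iii), decompose the conormal module of $(\id\times s)(W')$ in $W'\times\cc V$ using the composite $(\id_{W'},s)(W')\subset W'\times X'\subset W'\times\cc V$: the middle inclusion contributes $J/J^2$, which is free of rank $1$ over $k[W']$ because $J$ is locally principal (Notation \ref{X'_script}) and $W'$ is semilocal, so a single $e'$ with $e'\bmod J^2$ a generator exists; the outer inclusion $W'\times X'\hookrightarrow W'\times\cc V$ contributes $I_{W'\times X'}/I^2_{W'\times X'}$ restricted to $W'$, which is free of rank $N-1$ on $\phi'_1,\dots,\phi'_{N-1}$ by Proposition \ref{Framing_Of_X'}(i). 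Splicing the conormal exact sequence (which splits over the semilocal base) yields that $(\id_{W'}\times r)^*(e'),\phi'_1,\dots,\phi'_{N-1}$ form a free basis of rank $N$.

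Finally, (v): both \eqref{basis_t_prime} and \eqref{basis_e_prime_phi} are free bases of the same rank-$N$ free $k[W']$-module, so the change-of-basis matrix $A(e')$ lies in $GL_N(k[W'])$; the claim is that $e'$ in (iii) can be chosen so that $\det A(e')=1$. The point is that replacing $e'$ by $u\,e'+(\text{element of }J^2)$ for a unit $u\in k[W']^\times$ still satisfies (iii) and multiplies $\det A(e')$ by $u$; since $W'$ is semilocal (a product of local rings), $\Gamma(W',\cc O^\times)\to K_1(k[W'])$ is surjective — indeed $\det\colon GL_N(k[W'])\to k[W']^\times$ is split by units, so any determinant is realized — hence we may rescale $e'$ to arrange $\det A(e')=1$. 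I expect the main obstacle to be item (iii): getting the conormal short exact sequence of the nested closed embeddings to split correctly and identifying each graded piece with the asserted free module requires care about which ideal is which (particularly disentangling $I_{W'\times X'}$ from $I_{(\id\times s)(W')}$ and checking the rank-$1$ piece $J/J^2$ injects), whereas (v) is then a routine $K_1$-of-a-semilocal-ring rescaling argument. All of this mirrors the proof of \cite[Lemma 8.2]{GP4} as indicated before Proposition \ref{Framing_Of_X'}.
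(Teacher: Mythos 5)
Your proposal is correct and follows essentially the same route as the paper. The paper treats items (i)--(iv) as straightforward (exactly the conormal-module bookkeeping you sketch) and proves (v) by the same rescaling: fix any $e'$ as in (iii) and set $e'_{\mathrm{new}}=\det(A(e'))^{-1}\cdot e'$, so that $\det(A(e'_{\mathrm{new}}))=1$; your observation that replacing $e'$ by $u\,e'$ multiplies $\det A(e')$ by $u$ and then choosing $u=\det(A(e'))^{-1}$ is the identical argument. The digression through $K_1$ and the splitting of $\det\colon GL_N(k[W'])\to k[W']^\times$ is superfluous but harmless: all that is needed is that $\det A(e')$ is already a unit of $k[W']$ because $A(e')\in GL_N(k[W'])$.
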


\begin{proof}
The proof of this Lemma is straightforward.
{\it However the assertion (v) is of great utility for
the purpose of the present paper.
}
So, we derive it here from assertions (i)--(iv).
By the item (iii) of this Lemma there exists an element
$e'\in J\subset k[W'\times X']$ such that $e' \ \text{mod } J^2$
is a free generator of the
$k[W']$-module $J/J^2$.
Choose and fix any $e'\in J$ having this property.
By the item (iii) of this Lemma
the elements
(\ref{basis_e_prime_phi})
form a free basis of the $k[W']$-module $I_{(id\times s)(W')}/I^2_{(id\times s)(W')}$;
Let $A(e')\in GL_N(k[W'])$ be a matrix transforming
the free basis (\ref{basis_t_prime})
of the $k[W']$-module $I_{(id\times s)(W')}/I^2_{(id\times s)(W')}$
to the free basis (\ref{basis_e_prime_phi})
of the same $k[W']$-module.
Let $det(A(e'))\in k[W']^{\times}$ be its determinant.

Set $e'_{new}=det(A(e'))^{-1}\cdot e'$. Clearly, $det(A(e'_{new}))=1$.
\end{proof}

\begin{prop}
\label{h'_theta}
%Under the conditions of Remark \ref{Elementary_fibr}
%Notation
%\ref{Final_Notation}
Consider the diagram (\ref{SquareDiagram_new}).
Then under the notation \ref{X'_script}
there are functions
$F\in k[W\times X']$
and
$h_{\theta}\in k[\bb A^1\times W'\times X']$
($\theta$ is the parameter on the left factor $\bb A^1$)
such that the following properties hold for the functions
$h_{\theta}$, $h_1:=h_{\theta}|_{1\times W'\times X'}$ and $h_0:=h_{\theta}|_{0\times W'\times X'}$:
\begin{itemize}
\item[(a)]
the morphism $(pr,h_{\theta}): \bb A^1\times W'\times X' \to \bb
A^1\times W'\times \bb A^1$ is finite and surjective, hence the
closed subscheme $Z'_{\theta}:=(h_{\theta})^{-1}(0)\subset \bb
A^1\times W'\times X'$ is finite flat and surjective over $\bb A^1
\times W'$;
\item[(b)]
for the closed subscheme
$Z'_{0}:=(h_{0})^{-1}(0)$ one has
$Z'_{0}=\Delta(W') \sqcup G$ (an equality of closed subschemes) and $G\subset W'\times (X'-S')$;
\item[(c)]
$h_1=(\pi \times id_{X'})^{*}(F)$ (we write $Z'_1$ to denote the closed
subscheme $\{h_1=0\}$);
\item[(d)]
$Z'_{\theta}\cap \bb A^1\times (W'-S')\times S'=\emptyset$ or,
equivalently,\\ $Z'_{\theta}\cap \bb A^1\times (W'-S')\times X'
\subset \bb A^1 \times (W'-S') \times (X'-S')$;
\item[(e)]
the morphism $(pr_W,F): W\times X' \to W\times \bb A^1$ is finite
surjective, and hence the closed subscheme $Z_1:=F^{-1}(0)\subset
W\times X'$ is finite flat and surjective over $W$;
\item[(f)]
$Z_1\cap (W-S)\times S'=\emptyset$ or, equivalently, $Z_1\cap
(W-S)\times X' \subset (W-S)\times (X'-S')$;
\item[(g)] $h_0 \in J$, $h_0 \ \text{mod } J^2$
is a free generator of the
$k[W']$-module $J/J^2$ and $det((A(h_0))=1\in k[W']$, where
$A(h_0)$ is as in the item (v) of Lemma \ref{e'}.
\end{itemize}
\end{prop}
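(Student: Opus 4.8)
The plan is to produce $F$ and the whole homotopy $h_\theta$ as regular functions on base changes of $X'$ whose only poles, after passing to the compactification $\bar X'$, lie along $X'_\infty$ --- equivalently, as global sections of a high tensor power $\cc O_{\bar X'}(mX'_\infty)$ that are nowhere zero along $X'_\infty$. Since $X'_\infty$ is ample and $q'_\infty\colon X'_\infty\to B$, $q'|_{S'}\colon S'\to B$ are finite, the closed subschemes $W\times X'_\infty$, $W\times S'$ and their $W'$-analogues (notation $W\times X':=W\times_B X'$ of \ref{X'_script}) are finite over the semilocal bases, hence semilocal; for $m\gg0$ Serre vanishing makes the relevant linear systems globally generated and lets one prescribe the restriction of a section to any closed subscheme disjoint from the base locus, and since $k$ is infinite one has the usual moving freedom over $W$ and $W'$. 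The zero scheme of such a section is closed in a scheme proper over the base and disjoint from the divisor at infinity, hence finite over the base, and being fibrewise the vanishing locus of a non-zero-divisor on the smooth relative curve $X'/B$ it is automatically flat and --- its degree being constant and positive --- surjective over the base. This yields the finiteness, flatness and surjectivity clauses of (a) and (e) uniformly, once the functions are in hand.

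First I would build $F$. Since $X'/B$ is a smooth relative curve and $S$ is finite over $B$, $S$ is an effective Cartier divisor in $X$; over the semilocal $W$ let $f_S\in k[W]$ be a global equation of $S$. For $m\gg0$ I would choose $F\in k[W\times X']$ to be a section of $\cc O(m(W\times X'_\infty))$, nowhere zero along $W\times X'_\infty$, with $F\equiv pr_W^*(f_S)\pmod{I_{W\times S'}}$, which is possible by prescribing $F|_{W\times S'}$ and lifting. Then $Z_1:=F^{-1}(0)$ gives (e) by the first paragraph, and $Z_1\cap(W\times S')=V(f_S)\times_B S'=S\times_B S'$ is disjoint from $(W-S)\times S'$, so (f) holds. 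Put $h_1:=(\pi\times\id_{X'})^*(F)$ --- this is (c); since $\pi=\pi_W$ is \'etale and $\pi_W^{-1}(S)=S'$, the scheme $Z'_1$ is the base change of $Z_1$, finite flat over $W'$, and $h_1$ is nowhere zero on $(W'-S')\times S'$.

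Next I would build $h_0$. Take a free generator $e'$ of the $k[W']$-module $J/J^2$ as in Lemma~\ref{e'}(iii) and normalise it by Lemma~\ref{e'}(v): the element $e'_0:=\det(A(e'))^{-1}\,e'$ is again a free generator of $J/J^2$, has the same zero scheme, and satisfies $\det(A(e'_0))=1$. Regarding $e'_0$ as a section of $\cc O(m(W'\times X'_\infty))$, I would correct it by an element $\delta\in J^2$. Such a $\delta$ changes neither the class modulo $J^2$ --- hence neither the free-generator property nor $\det A$, which depends only on that class --- nor the order-one vanishing along $\Delta(W')$; and since $W'\times X'_\infty$, $(W'-S')\times S'$ and the finitely many closed points of $W'\times S'$ lying off $\Delta(W')$ are all disjoint from $\Delta(W')$, the restriction of $J^2$ to these loci is the unit ideal, so $\delta$ may be chosen freely there. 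Choose it so that $h_0:=e'_0+\delta$ agrees with $h_1$ on $W'\times X'_\infty$ and on $(W'-S')\times S'$ and is nonzero at the closed points of $W'\times S'$ off $\Delta(W')$. Since $h_0$ still generates $J$ in a neighbourhood of $\Delta(W')$, one has $Z'_0:=\{h_0=0\}=\Delta(W')\sqcup G$ with $G$ finite over $W'$ and disjoint from a neighbourhood of $\Delta(W')$; and $Z'_0\cap(W'\times S')=\Delta(S')$ by the choices above (checked at the closed points of the semilocal $W'\times S'$), so $G\cap(W'\times S')=\emptyset$, i.e.\ $G\subset W'\times(X'-S')$. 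This is (b), and (g) holds by construction.

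Finally I would set $h_\theta:=(1-\theta)h_0+\theta h_1=h_0+\theta(h_1-h_0)$. By the matchings, $h_1-h_0$ vanishes on $W'\times X'_\infty$ and on $(W'-S')\times S'$, so $h_\theta$ restricts there to $h_0$, which is nowhere zero on both (on the first because $F$ is, on the second by (f)); hence the corresponding section of $\cc O(m(\bb A^1\times W'\times X'_\infty))$ is nowhere zero along $\bb A^1\times W'\times X'_\infty$, giving (a), and $h_\theta$ is nowhere zero on $\bb A^1\times(W'-S')\times S'$, which is (d). The fibre $\theta=0$ is $h_0$, giving (b); the fibre $\theta=1$ is $h_1=(\pi\times\id)^*(F)$, giving (c); and (e),(f),(g) are already in hand. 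I expect the heart of the matter to be the coordination of the third paragraph: pinning down $h_0$ simultaneously as a free generator of $J/J^2$ with $\det(A(h_0))=1$, with its junk divisor $G$ off $S'$, and with restriction along $X'_\infty$ and along $(W'-S')\times S'$ equal to that of the already-fixed $h_1$, so that the linear homotopy acquires no component running off to infinity. The input that makes this possible over an arbitrary infinite field --- not only in characteristic $\neq 2$ as in \cite{GP4} --- is precisely the determinant normalisation of Lemma~\ref{e'}(v).
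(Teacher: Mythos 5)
Your overall strategy (compactify to $\bar X'$, take sections of $\cc L^{\otimes M}$ of the ample divisor $X'_\infty$, prescribe boundary values on closed subschemes away from $\Delta(W')$ by Serre vanishing over the semilocal base, form the linear homotopy $h_\theta=(1-\theta)h_0+\theta h_1$, and use Lemma~\ref{e'}(v) to arrange $\det A(h_0)=1$) is the same as the paper's. But your specific choice of boundary value for $F$ breaks the argument at the $W'\times S'$ locus.

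You set $F|_{W\times S'}=pr_W^*(f_S)|_{W\times S'}$, so $h_1|_{W'\times S'}=pr_{W'}^*(\pi^*(f_S))$ vanishes on all of $(S'\cap W')\times_B S'$. On the other hand, you take $h_0=e'_0+\delta$ with $\delta\in J^2$, so $h_0\bmod J^2$ is a free generator of $J/J^2$; restricting to $W'\times S'$ (where $J\cdot k[W'\times S']=J_{\Delta(S')}$ by Lemma~\ref{bar_J_and_J_Delta_prime}), $h_0|_{W'\times S'}$ is therefore a free generator of $J_{\Delta(S')}$ and vanishes \emph{exactly} on $\Delta(S')$. Now $(W'-S')\times S'$ is a dense open in the reduced scheme $W'\times_B S'$ (it is the preimage of the dense open $W'-S'\subset W'$ under the smooth projection $W'\times_B S'\to W'$), so requiring $h_0=h_1$ on $(W'-S')\times S'$ forces $h_0|_{W'\times S'}=h_1|_{W'\times S'}$ — a section vanishing on $(S'\cap W')\times_B S'\supsetneq\Delta(S')$. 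That is incompatible with the free-generator property of $h_0|_{W'\times S'}$ unless $S'\to B$ has degree one. Note also that what you actually prescribe is a boundary value on the \emph{open} locus $(W'-S')\times S'$, which Serre vanishing cannot deliver directly; you would have to prescribe on its closure $W'\times S'$, which is exactly what makes the clash visible.

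The paper avoids this by not taking $pr_W^*(f_S)$. Instead it prescribes $F|_{W\times S'}=e$, where $e\in J_\Gamma$ is a free generator of the ideal of the graph $\Gamma=\{(\pi(s'),s'):s'\in S'\}\subset W\times S'$, constructed in Lemma~\ref{e'_and_e} so that $(\pi_W\times\id)^*(e)\equiv e'\pmod{J^2_{\Delta(S')}}$. Then $h_1|_{W'\times S'}=(\pi_W\times\id)^*(e)$ is itself a free generator of $J_{\Delta(S')}$ (Lemmas~\ref{J_Delta'_and_J_Gamma}, \ref{e'_and_e}), and the section $s'$ is prescribed (conditions (i')--(iii')) so that $h_0|_{W'\times S'}=(\pi_W\times\id)^*(e)$ as well — consistency of (ii') and (iii') being precisely the equality~(\ref{elements_e_and_e'}). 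Hence $h_\theta|_{W'\times S'}$ is independent of $\theta$ and vanishes exactly on $\Delta(S')$, giving (b) and (d) simultaneously. The Lemmas \ref{Delta_2_S'}--\ref{e'_and_e}, which you do not invoke, are exactly the machinery needed to match the boundary value of $F$ over $W$ with the free generator $e'$ of $J/J^2$ over $W'$; without them the linear homotopy picks up a component of its zero locus running through $(W'-S')\times S'$. You should replace $pr_W^*(f_S)$ by the element $e$ of Lemma~\ref{e'_and_e} (applied to the pre-normalised $e'$ of Lemma~\ref{e'}(v)), and prescribe both $F|_{W\times S'}=e$ and $h_0|_{W'\times S'}=(\pi_W\times\id)^*(e)$ as well as $h_0\bmod J^2=e'$; then your $\delta$-correction argument becomes unnecessary and the proof matches the paper's.
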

%{\bf TO GET THE PROPERTY $G\subset W'\times (X'-S')$ WE NEED TO REQUIRE THE FOLLOWING ADDITIONAL CONDITION ON THE FUNCTION
%$e'$ \\}
To prove Proposition
\ref{h'_theta}
we need in several lemmas.
\begin{lem}\label{Delta_2_S'}
Consider the graph $\Delta(S')\subset W'\times S':=W'\times_B S'$ of the embedding $S'\hookrightarrow W'$
from Definition \ref{W_and_W'}.
Then $\Delta(S')$ is a principal Cartier divisor on $W'\times S'$.
\end{lem}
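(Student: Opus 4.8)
The plan is to work locally on the affine scheme $W'\times_B S'$ and to exploit that $S'$ is a \emph{finite} $B$-scheme embedded in a smooth relative curve $X'/B$. First I would observe that, since $S'\hookrightarrow X'$ is a closed embedding, the diagonal $\Delta(S')=\{(s,s):s\in S'\}$ sits inside $W'\times_B S'$ as the pullback along $S'\hookrightarrow W'$ of the graph of $S'\hookrightarrow X'$; equivalently it is cut out by the ideal $J|_{W'\times_B S'}$ where $J\subset k[W'\times X']$ is the diagonal ideal of Notation \ref{X'_script}. By Remark \ref{Stable_Normal_Bundle} we have $\Omega^1_{X'/B}\cong\cc O_{X'}$, so the conormal sheaf $J/J^2$ of the relative diagonal of the smooth relative curve $X'/B$ is a free rank-one $\cc O_{W'}$-module (this is exactly item (iii) of Lemma \ref{e'}: some $e'\in J$ is a free generator of $J/J^2$). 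Hence the closed subscheme $\Delta(W')\subset W'\times_B X'$ is a locally principal (Cartier) divisor, and its restriction $\Delta(S')\subset W'\times_B S'$ is therefore a locally principal Cartier divisor as well.

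The remaining task is to upgrade ``locally principal'' to ``globally principal,'' i.e.\ to produce a single global equation for $\Delta(S')$ on the affine scheme $W'\times_B S'$. The key structural fact I would use is that $W'=\spec(\cc O_{X',\{s'_0,\dots,s'_n\}})$ is a semilocal Dedekind-type scheme (a regular semilocal one-dimensional scheme once $X'/B$ is a curve and $W'$ is semilocal), so that $W'\times_B S'$ is finite over the semilocal scheme $W'$; in particular it is itself semilocal. On a semilocal scheme every invertible sheaf is trivial, hence every locally principal Cartier divisor is principal. Combining this with the previous paragraph gives that the ideal $J|_{W'\times_B S'}$ defining $\Delta(S')$ is generated by a single element, which is precisely the assertion.

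The main obstacle, and the point that needs care, is the global triviality step: one must check that $W'\times_B S'$ really is semilocal (equivalently, that $S'$ being finite over $B$ together with $W'$ being semilocal over $B$ forces $W'\times_B S'$ to have only finitely many maximal ideals), and that the locally free rank-one conormal module $J/J^2$, restricted to this ring, is in fact free so that a generator lifts to a generator of $J|_{W'\times_B S'}$ itself rather than merely of $J/J^2$. The lifting uses that $\Delta(S')$ is a Cartier divisor, so $J|_{W'\times_B S'}$ is an invertible module on a semilocal ring, hence free of rank one, and any basis element is the desired global generator. (Alternatively, one can avoid the abstract argument by taking the function $e'$ from Lemma \ref{e'}(iii), restricting it to $W'\times_B S'$, and checking directly that its restriction still generates the restricted diagonal ideal, using that $S'\cap X'_\infty=\emptyset$ so nothing goes wrong at infinity.) Either way the content is elementary commutative algebra once the semilocal nature of $W'\times_B S'$ is pinned down.
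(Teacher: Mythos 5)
Your approach is sound, and since the paper states Lemma \ref{Delta_2_S'} without proof I cannot compare it to an official argument; but one step of your write-up needs repair. You argue that $\Delta(W')\subset W'\times_B X'$ is a Cartier divisor (correct) and that its restriction $\Delta(S')$ to the closed subscheme $W'\times_B S'$ is ``therefore'' a (locally principal) Cartier divisor. This implication is not automatic: the restriction of a Cartier divisor to a closed subscheme need not remain Cartier, because the local equation may become a zero--divisor there (this fails, for instance, if some associated prime of the subscheme is contained in the divisor). The clean way to close the gap is to observe that $\Delta(S')$ is precisely the image of the section $(i,\id)\colon S'\to W'\times_B S'$ of the projection $W'\times_B S'\to S'$, and that this projection is smooth of relative dimension one (it is the base change of $q'|_{W'}\colon W'\to B$ along $S'\to B$). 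A section of a smooth morphism of relative dimension $d$ is a regular immersion of codimension $d$; for $d=1$ the ideal of $\Delta(S')$ is therefore locally generated by a single non-zero-divisor, without any hypothesis on $W'\times_B S'$ itself. The second half of your argument is fine: $k[W'\times_B S']$ is finite over the semilocal ring $k[W']$ because $S'/B$ is finite, hence itself semilocal, and an invertible module over a semilocal ring is free, so ``locally principal'' upgrades to ``principal.'' Your alternative route via the generator $e'$ of Lemma \ref{e'}(iii) also works, once the restricted ideal is known to be invertible. One small slip worth removing: $W'$ is semilocal but certainly not one-dimensional (its dimension equals $\dim X'$), and nothing in the argument uses a ``Dedekind-type'' structure; only semilocality is needed.
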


Recall that $\Pi|_{S'}: S'\to S$ is a scheme isomorpism.
\begin{lem}\label{Gamma_2_S'}
Consider the section $((\pi_{W})|_{S'},id): S'\to W\times S'$ of the projection $W\times S'\to S'$.
Let $\Gamma$ be its image in $W\times S'$ (it is a closed subscheme in $W\times S'$).Then
$\Gamma$ is a principal Cartier divisor on $W\times S'$.
\end{lem}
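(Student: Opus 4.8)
The plan is to reduce this to the previous Lemma \ref{Delta_2_S'} by transporting the diagonal along the isomorphism $\Pi|_{S'}\colon S'\to S$. First I would observe that $W\times_B S'\cong W\times_B S$ via $\id_W\times (\Pi|_{S'})$, so it suffices to show that the graph $\Gamma$ of the composite $S'\xrightarrow{\pi_W|_{S'}} W\xrightarrow{} W$, viewed inside $W\times_B S$ after applying this isomorphism on the second factor, is a principal Cartier divisor. Now the key point is that $\pi_W\colon W'\to W$ restricts, on $S'$, to an isomorphism onto its image $S\subset W$ (indeed $\pi_W|_{S'}$ equals $\Pi|_{S'}$ followed by the inclusion $S\hookrightarrow W$, and $\Pi|_{S'}\colon S'\to S$ is an isomorphism). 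Hence, after identifying the target copy of $S'$ with $S$, the section $((\pi_W)|_{S'},\id)$ becomes the diagonal section of the projection $W\times_B S\to S$; that is, $\Gamma$ is carried to $\Delta(S)\subset W\times_B S$.

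So the whole statement follows once we know that $\Delta(S)$ is a principal Cartier divisor on $W\times_B S$, i.e. the exact analogue of Lemma \ref{Delta_2_S'} with $(W',S')$ replaced by $(W,S)$. I would deduce this from Lemma \ref{Delta_2_S'} itself: the isomorphism $\id_W\times(\Pi|_{S'})^{-1}\colon W\times_B S\xrightarrow{\ \sim\ }W\times_B S'$ is not available in the form we want (it mixes $W$ and $W'$), so instead I would simply rerun the proof of Lemma \ref{Delta_2_S'} verbatim for the pair $(W,S)$: the ideal of the diagonal of $S$ in $W\times_B S$ is, locally on the affine schemes in sight, generated by the differences $q^*(\text{coordinate on }X)-(\text{same coordinate on }S)$, and $S$ being finite over $B$ with $W=\Spec(\cc O_{X,\{s_0,\dots,s_n\}})$ semilocal, one shows that this ideal is locally principal, generated by a single function whose vanishing scheme is exactly $\Delta(S)$. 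The semilocality of $W$ is what makes a locally principal Cartier divisor actually globally principal here.

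The main obstacle I expect is bookkeeping rather than mathematics: one must be careful that the identification $W\times_B S'\cong W\times_B S$ genuinely carries the section $((\pi_W)|_{S'},\id)$ to the diagonal of $S$, which hinges on the compatibility $\pi_W|_{S'}=\iota_S\circ(\Pi|_{S'})$ where $\iota_S\colon S\hookrightarrow W$ — and this in turn uses that $S'$ and $S$ are closed in $W'$ and $W$ respectively (noted in Definition \ref{W_and_W'}) and that $\pi_W$ is the restriction of $\Pi$. Once that identification is pinned down, principality is the same argument as in Lemma \ref{Delta_2_S'}, applied with the semilocal base $W$ in place of $W'$, using that $S$ is finite over $B$ and $\Delta(S)$ is locally cut out by one equation.
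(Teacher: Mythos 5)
Your proof is correct, and since the paper states Lemma \ref{Gamma_2_S'} without any argument (it is treated as straightforward alongside Lemma \ref{Delta_2_S'}), there is no ``official'' proof to diverge from. The transport step is valid: $\id_W\times(\Pi|_{S'})\colon W\times_B S'\to W\times_B S$ is an isomorphism of $B$-schemes (because $\Pi|_{S'}\colon S'\to S$ is a $B$-isomorphism), and it carries $\Gamma$ exactly to the graph $\Delta(S)$ of $S\hookrightarrow W$, so it suffices to show $\Delta(S)$ is a principal Cartier divisor. That in turn holds for the same reason as in Lemma \ref{Delta_2_S'}: $W\to B$ is essentially smooth of relative dimension one, so the projection $W\times_B S\to S$ is a smooth relative curve and its section $\Delta(S)$ is a Cartier divisor; and $W\times_B S$ is finite over the semilocal scheme $W$, hence itself semilocal, so every invertible ideal is principal.

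Two small remarks. First, the transport to $\Delta(S)$ is harmless but unnecessary: you can argue directly that $\Gamma$ is the image of a section of the smooth relative curve $W\times_B S'\to S'$ (base change of $W\to B$ along the finite morphism $S'\to B$), hence a Cartier divisor, and that $W\times_B S'$ is semilocal because it is finite over $W$; this gives principality at once. Second, the sentence about the ideal being ``generated by the differences $q^*(\text{coordinate on }X)-(\text{same coordinate on }S)$'' is not quite the right description --- $q^*$ pulls functions back from $B$, whereas you want a \emph{relative} coordinate for $W/B$, i.e.\ locally a function $t$ on $W$ such that $dt$ trivializes $\Omega^1_{W/B}$, and then the generator is $t\otimes 1-1\otimes(t|_S)$ --- but this is only an imprecision in phrasing, not a gap in the argument. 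Likewise, the semilocality that matters for principality is that of the total space $W\times_B S$ (or $W\times_B S'$), not of $W$ alone; you derive it from semilocality of $W$ plus finiteness of $S/B$, which is fine.
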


\begin{notn}
Let $J_{\Delta(S')}\subset k[W'\times S']$ be the ideal defining $\Delta(S')$ in $W'\times S'$.
Let $J_{\Gamma}\subset k[W\times S']$ be the ideal defining $\Gamma$ in $W\times S'$.
Note that the scheme morphism
$\pi_W\times id: W'\times S'\to W\times S'$
identifies the closed subscheme $\Delta(S')$ with the closed subcheme $\Gamma$.

Set
$\Delta^{(2)}(S')=Spec(k[W'\times S']/J^2_{\Delta(S')})$
and
$\Gamma^{(2)}=Spec(k[W\times S']/J^2_{\Gamma})$.
Then $\pi_W\times id|_{\Delta^{(2)}(S')}$ identifies $\Delta^{(2)}(S')$ with the closed subcheme $\Gamma^{(2)}$.
Let $\tau: \Delta^{(2)}(S')\to \Gamma^{(2)}$ be this scheme isomorphism.
\end{notn}
For an open inclusion $T^{\circ} \subset T$ of affine schemes and an $\Gamma(T,\cc O_T)$-module $M$
we will write $M|_{T^{\circ}}$ for the $\Gamma(T^{\circ},\cc O_{T^{\circ}})$-module
$M\otimes_{\Gamma(T,\cc O_T)} \Gamma(T^{\circ},\cc O_{T^{\circ}})$.

By Lemma \ref{Delta_2_S'} the open subscheme $W'\times S'-\Delta(S')$ of the affine scheme
$W'\times S'$ is an affine scheme.
Let $\bar J=J\cdot k[W'\times S']\subset k[W'\times S']$.
Then it is easy to check that the inclusion
$\bar J|_{W'\times S'-\Delta(S')}\subset k[W'\times S'-\Delta(S')]$
is an equality. Thus, one has an inclusion of ideals
$\bar J\subset J_{\Delta(S')}$
of the $k$-algebra $k[W'\times S']$.
{\it The following lemma requires a proof.
}
However we left it to the reader.

\begin{lem}\label{bar_J_and_J_Delta_prime}
The inclusion $\bar J\subset J_{\Delta(S')}$ of the ideals is an equality.
As a consequence $\bar J/\bar J^2=J_{\Delta(S')}/J^2_{\Delta(S')}$.
\end{lem}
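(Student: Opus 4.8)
The plan is to prove $\bar J = J_{\Delta(S')}$ by a local argument, reducing to an equality of ideals in a Noetherian local ring. Both $\bar J$ and $J_{\Delta(S')}$ are ideals of $k[W'\times S']$ and we already know $\bar J\subset J_{\Delta(S')}$. Since $J_{\Delta(S')}$ is the ideal of the graph $\Delta(S')$, which by Lemma~\ref{Delta_2_S'} is a principal Cartier divisor on $W'\times S'$, the ideal $J_{\Delta(S')}$ is locally free of rank one; in particular it is invertible. To check that the inclusion $\bar J\subset J_{\Delta(S')}$ is an equality it suffices to check it after localizing at every point of $W'\times S'$, and since both ideals vanish only along $\Delta(S')$, the only points where the inclusion could be proper are the points of $\Delta(S')$.

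First I would localize at a point $z\in\Delta(S')$ and pass to the local ring $R=\cc O_{W'\times S',z}$. There $J_{\Delta(S')}R=(f)$ for a nonzerodivisor $f$ (the local equation of the Cartier divisor $\Delta(S')$). On the complement of $\Delta(S')$, i.e. after inverting $f$, the displayed remark in the excerpt tells us that $\bar J|_{W'\times S'-\Delta(S')} = k[W'\times S'-\Delta(S')]$ is an equality, so $\bar J R_f = R_f$, meaning $f^m\in \bar J R$ for some $m\geq 0$; equivalently $\bar J R\supseteq (f^m)$. Combined with $\bar J R\subseteq (f)$ we get $(f^m)\subseteq \bar J R\subseteq (f)$ for some $m$. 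To upgrade this to $\bar J R=(f)$ I would use the geometric input that $J=I_{\Delta(W')}$ is locally principal (Notation~\ref{X'_script} records that $J$ is locally principal), hence $\bar J R = J\cdot R$ is itself principal, generated by the image of a local equation $e$ of $\Delta(W')$ in $W'\times X'$ restricted to $W'\times S'$. So $\bar J R=(\bar e)$ for some $\bar e\in R$ with $(\bar e)\subseteq (f)$, say $\bar e = f\cdot u$. It remains to see $u$ is a unit.

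For that I would look at the conormal sheaves, which is exactly what Lemma~\ref{bar_J_and_J_Delta_prime} also asserts as a consequence. The divisor $\Delta(S')\subset W'\times S'$ is, scheme-theoretically, the transverse intersection $\Delta(W')\cap (W'\times S')$ inside $W'\times X'$: indeed $W'\times S'$ is a closed subscheme of $W'\times X'$ cut out by the pullback of $I_{S'}\subset k[X']$ (extended over $B$), and $\Delta(W')$ maps isomorphically onto $W'$, whose preimage in $W'\times S'$ is precisely $\Delta(S')$. Thus $J_{\Delta(S')} = J\cdot k[W'\times S'] + (\text{image of }I_{W'\times S'})$ taken inside $k[W'\times S']$, but modulo the ideal of $W'\times S'$ the second term is zero, so $J_{\Delta(S')}$ is generated by the image of $J$, which is $\bar J$ by definition. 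Concretely: $k[W'\times S'] = k[W'\times X']/I_{W'\times S'}$ and $J_{\Delta(S')}$ is, by definition of a quotient, the image of the ideal of $\Delta(W')$ in $k[W'\times X']$ which is $J$; the image of $J$ in the quotient is exactly $\bar J$. Hence $\bar J = J_{\Delta(S')}$ on the nose, without any localization. The consequence $\bar J/\bar J^2 = J_{\Delta(S')}/J^2_{\Delta(S')}$ is then immediate once one checks $\bar J^2 = J^2_{\Delta(S')}$, which follows by squaring the equality of ideals.

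The main obstacle is the verification that $\Delta(W')\cap(W'\times S')=\Delta(S')$ as \emph{schemes}, i.e. that no embedded or thickened components appear; this is where one uses that $S'\hookrightarrow W'$ is a closed embedding and that $W'\times S'$ is defined by the honest (radical-free) pullback ideal of $S'$, so that intersecting with the graph $\Delta(W')$ — along which the two factors are identified — recovers the graph $\Delta(S')$ with its reduced-by-$S'$ structure and nothing more. Once that scheme-theoretic identity is in place, the equality $\bar J=J_{\Delta(S')}$ is the tautological statement that the defining ideal of a closed subscheme of a quotient ring is the image of the defining ideal upstairs, and the rest is formal.
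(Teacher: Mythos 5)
The paper explicitly leaves this lemma's proof to the reader, so there is no in-paper argument to compare against. Your scheme-theoretic argument (the second one) is correct and proves the lemma cleanly; your first, local-ring argument is a dead end that you rightly abandon, since ``it remains to see $u$ is a unit'' is essentially the content of the lemma and cannot be closed by the length/localization manipulations alone. The one place your justification is vague is the scheme-theoretic identity $\Delta(W')\cap(W'\times_B S')=\Delta(S')$ inside $W'\times_B X'$; you invoke transversality-flavoured language (``recovers the graph \ldots and nothing more'') that suggests something needs to be checked, when in fact it is automatic. The clean way to see it is as a fiber product: $\Delta(W')\cap(W'\times_B S')=\Delta(W')\times_{W'\times_B X'}(W'\times_B S')$, whose $T$-points are pairs $(a\colon T\to W',\,c\colon T\to S')$ satisfying that the composites $T\to W'\to X'$ and $T\to S'\to X'$ agree. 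Because $W'\to X'$ is a monomorphism (it is a semi-localization of $X'$) and $S'\hookrightarrow X'$ factors through it, $a$ is forced to equal $i_{S'}\circ c$; so the fiber product is $S'$, embedded in $W'\times_B S'$ by $(i_{S'},\mathrm{id}_{S'})$, i.e.\ as $\Delta(S')$. No smoothness, reducedness, or transversality input is needed. Once this identity is in hand, $J_{\Delta(S')}$ is the image of $J+I_{W'\times S'}$ in $k[W'\times S']=k[W'\times X']/I_{W'\times S'}$, which is $\bar J$ by definition, and the conormal consequence follows by squaring. Consider cutting the local-ring preamble and leading with the fiber-product computation — the former buys nothing once the latter is in place.
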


\begin{rem}
Lemma \ref{bar_J_and_J_Delta_prime} guaranties the following: if
$e'\in J$ is such that
$e' \ \text{mod} \ J^2$
is a free basis of the free rank one $k[W']$-module
$J/J^2$,
then the element
$e' \ \text{mod} \ J^2_{\Delta(S')}$
is a free basis of the free rank one $k[S']$-module
$J_{\Delta(S')}/J^2_{\Delta(S')}$.
Particularly, $e' \ \text{mod} \ J^2_{\Delta(S')}$  does not vanish.
\end{rem}
The proof of following lemma is straightforward.
\begin{lem}\label{J_Delta'_and_J_Gamma}
%$\tau^*: J_{\Gamma}\to J_{\Delta'}$ of the ideals.
Any $k[\Gamma]$-module $M$ we will regard as an $k[\Delta(S')]$-module using the $k$-algebra isomorphism
$(\tau)^*: k[\Gamma]\to k[\Delta(S')]$.
As indicated above the morphism $\pi_W\times id: W'\times S' \to W\times S'$ induces
the scheme isomorphism
$\tau: \Delta^{(2)}(S')\to \Gamma^{(2)}$
and an isomorphism
$J_{\Gamma}/J^2_{\Gamma} \xrightarrow{\tau^*} J_{\Delta'}/J^2_{\Delta'}$
of the $k[\Gamma]$-modules.

Moreover, $J_{\Gamma}/J^2_{\Gamma}$ and $J_{\Delta'}/J^2_{\Delta'}$
are free rank one $k[\Gamma]$-modules.

If $g\in J_{\Gamma}/J^2_{\Gamma}$ is a free generator of the free rank one $k[\Gamma]$-module
$J_{\Gamma}/J^2_{\Gamma}$, then any lift $\tilde g\in J_{\Gamma}$ of $g$ is a free generator
of the free rank one $k[W\times S']$-module $J_{\Gamma}$.

If $\tilde g\in J_{\Gamma}$ is a free generator of the $k[W\times S']$-module $J_{\Gamma}$,
then $(\pi_W\times id)^*(\tilde g)$ is a free generator of the $k[W'\times S']$-module $J_{\Delta'}$.
\end{lem}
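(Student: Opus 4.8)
The plan is to reduce the statement to the functoriality of conormal sheaves of graphs, to the triviality of the relevant sheaves of relative differentials supplied by Remark~\ref{Stable_Normal_Bundle}, and to the fact that $\pi_W$ is \'etale. Note first that $W=Spec(\cc O_{X,\{s_i\}})$ and $W'=Spec(\cc O_{X',\{s'_i\}})$ are localizations of $X$ and $X'$, so that $W\to B$ and $W'\to B$ are smooth of relative dimension one, $\Omega^1_{W/B}\cong\cc O_W$ and $\Omega^1_{W'/B}\cong\cc O_{W'}$ by Remark~\ref{Stable_Normal_Bundle}, and $\pi_W\colon W'\to W$ is \'etale (being a localization of the \'etale $\Pi$), whence $\pi_W^{*}\Omega^1_{W/B}\xrightarrow{\ \sim\ }\Omega^1_{W'/B}$. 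Now $\Gamma\subset W\times S'=W\times_B S'$ is by construction the graph of the $B$-morphism $\pi_W|_{S'}\colon S'\to W$, hence a section of the smooth relative curve $W\times_B S'\to S'$; its conormal module is therefore canonically $(\pi_W|_{S'})^{*}\Omega^1_{W/B}\cong\cc O_{S'}$, i.e.\ $J_\Gamma/J_\Gamma^2$ is free of rank one over $k[\Gamma]=k[S']$. Likewise $\Delta(S')\subset W'\times_B S'$ is the graph of $S'\hookrightarrow W'$, so $J_{\Delta'}/J_{\Delta'}^2\cong\Omega^1_{W'/B}|_{S'}\cong\cc O_{S'}$ is free of rank one over $k[\Delta(S')]$, which $\tau^{*}$ identifies with $k[\Gamma]$.

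Since $\pi_W$ is \'etale, so is $\pi_W\times id\colon W'\times_B S'\to W\times_B S'$, and by construction it carries $\Delta(S')$ isomorphically onto $\Gamma$ (both being sections over $S'$). An \'etale morphism restricting to an isomorphism between two closed subschemes induces isomorphisms of their infinitesimal neighbourhoods (the source subscheme is a union of connected components of the preimage of the target one); thus $(\pi_W\times id)^{*}$ induces an isomorphism $J_\Gamma/J_\Gamma^2\xrightarrow{\ \sim\ }J_{\Delta'}/J_{\Delta'}^2$, which under the identifications above is exactly the isomorphism $(\pi_W|_{S'})^{*}\Omega^1_{W/B}\xrightarrow{\ \sim\ }\Omega^1_{W'/B}|_{S'}$ coming from $\pi_W^{*}\Omega^1_{W/B}\xrightarrow{\ \sim\ }\Omega^1_{W'/B}$. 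Comparing the two square-zero extensions $0\to J_\Gamma/J_\Gamma^2\to k[\Gamma^{(2)}]\to k[\Gamma]\to 0$ and $0\to J_{\Delta'}/J_{\Delta'}^2\to k[\Delta^{(2)}(S')]\to k[\Delta(S')]\to 0$, which agree on the sub- and quotient-terms, then gives that $\tau\colon\Delta^{(2)}(S')\to\Gamma^{(2)}$ is an isomorphism realizing the displayed $\tau^{*}$.

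For the generation statements, recall that by Lemma~\ref{Gamma_2_S'} the ideal $J_\Gamma$ is generated by a single non-zero-divisor $f$, so $J_\Gamma$ is free of rank one over $k[W\times S']$ and $\bar f$ is a basis of $J_\Gamma/J_\Gamma^2$. If $g$ is any basis of $J_\Gamma/J_\Gamma^2$ and $\tilde g\in J_\Gamma$ lifts $g$, write $\tilde g=cf$; since $g$ and $\bar f$ are both bases of the free rank-one module $J_\Gamma/J_\Gamma^2$, the class $\bar c$ is a unit of $k[\Gamma]$, whence $c$ is a unit by Nakayama along $\Gamma$ and $\tilde g$ again generates $J_\Gamma$ (the same argument with $\Delta(S')$ and Lemma~\ref{Delta_2_S'} handles the $W'\times S'$ side). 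Finally $(\pi_W\times id)^{*}J_\Gamma\subseteq J_{\Delta'}$ because $\Delta(S')$ maps into $\Gamma$; both ideals are invertible (Lemmas~\ref{Gamma_2_S'}, \ref{Delta_2_S'}), and by the second step the induced map of conormal modules is onto, so $(\pi_W\times id)^{*}J_\Gamma=J_{\Delta'}$ by Nakayama; hence $(\pi_W\times id)^{*}\tilde g$ generates $J_{\Delta'}$ whenever $\tilde g$ generates $J_\Gamma$.

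The \'etaleness of $\pi_W$ and the functoriality of conormal sheaves are routine. The point requiring genuine care is the second step — verifying that the \'etale morphism $\pi_W\times id$ genuinely realizes $\tau^{*}$ as the isomorphism on relative differentials induced by the \'etaleness of $\pi_W$ (and not merely as some isomorphism) — together with the Nakayama bookkeeping of the third step that turns a generator modulo $J^2$ into a generator of $J$, which is where the principality of $\Gamma$ and $\Delta(S')$ furnished by Lemmas~\ref{Gamma_2_S'} and \ref{Delta_2_S'} enters.
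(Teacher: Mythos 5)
Your conormal-sheaf and \'{e}taleness arguments correctly establish the first two parts of the lemma: identifying $J_\Gamma/J_\Gamma^2$ with the conormal module of a section of the smooth relative curve $W\times_B S'\to S'$ (hence with $(\pi_W|_{S'})^*\Omega^1_{W/B}\cong\cc O_{S'}$), likewise for $J_{\Delta'}/J_{\Delta'}^2$, and using that an \'{e}tale map carrying $\Delta(S')$ isomorphically onto $\Gamma$ identifies their square-zero extensions, is the right way to see the isomorphism $\tau^*$ and the freeness. (The paper itself offers no proof, stating only that the lemma is straightforward, so there is no paper argument to compare against.)

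The gap lies in the third step, at the phrase ``$c$ is a unit by Nakayama along $\Gamma$.'' Nakayama promotes $(\tilde g)+J_\Gamma^2=J_\Gamma$ to $(\tilde g)=J_\Gamma$ only if $J_\Gamma$ is contained in the Jacobson radical of $k[W\times S']$ --- equivalently, only if every closed point of $W\times_B S'$ lies on $\Gamma$ --- and you do not establish this. It is in fact doubtful: $W\times_B S'$ is finite over the semi-local scheme $W=Spec\,\cc O_{X,\{s_0,\dots,s_n\}}$, so its closed points sit over the $s_i$, and over each $s_i$ the fibre of $S'/B$ contributes points above every $s'_j$, not only $s'_i$; but $\Gamma$, being the graph of $\pi_W|_{S'}$, passes only through the ``diagonal'' point above each $s_i$. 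At an off-diagonal closed point $J_\Gamma$ is the unit ideal and $J_\Gamma/J_\Gamma^2=0$, so the class $g$ places no constraint on a lift $\tilde g=cf$ there; by the Chinese Remainder Theorem one can choose $c$ vanishing at such a point while $\bar c$ remains a unit on $\Gamma$, and then $\tilde g$ does not generate $J_\Gamma$. The same concern recurs in your closing Nakayama step for $J_{\Delta'}$. As written, your argument therefore does not prove the ``any lift is a free generator'' clause; one must either show that off-diagonal closed points genuinely do not occur in the present geometry (which would need control over residue-field extensions and is not automatic), or weaken the claim to ``a suitably chosen lift,'' namely $\tilde g=\hat c f$ with $\hat c$ a global unit reducing to $\bar c$ --- which is all that the subsequent Lemma on $e'$ and $e$ actually requires.
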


\begin{lem}\label{e'_and_e}
There exists an element $e'\in J$ such that $e' \ \text{mod} \ J^2$ is a free basis of the $k[W']$-module $J/J^2$.
For any $e'\in J$ such that $e' \ \text{mod} \ J^2$ is a free basis of the $k[W']$-module $J/J^2$
there exists an element
$e\in J_{\Gamma}$ such that the following holds:
\begin{equation}\label{element_e}
e \in J_{\Gamma} \ \text{is a free basis of the free rank one} \ k[W\times S']-\text{module} \ J_{\Gamma},
\end{equation}
\begin{equation}\label{elements_e_and_e'}
(\pi_W\times id)^*(e)|_{\Delta^{(2)}(S')}=e'|_{\Delta^{(2)}(S')}.
\end{equation}

For any element $e \in J_{\Gamma}$ subjecting to condition
(\ref{element_e}) the element
$(\pi_W\times id)^*(e)$
is in $J_{\Delta(S')}$ and
it is a free basis of the free rank one
$k[W'\times S']$-module $J_{\Delta(S')}$.

%Moreover, one can find $e\in J_{\Gamma}$ subjecting as to condition
%(\ref{element_e}) so to the following one
%$$(\pi_W\times id)^*(e)|_{\Delta^{(2)}(S')}=e'|_{\Delta^{(2)}(S')}.$$
\end{lem}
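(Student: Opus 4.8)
The plan is to deduce all three assertions of the lemma by chaining together the facts already established in this section; the only genuinely geometric input is the production of the first element $e'$, after which everything is bookkeeping with the ideals $J$, $\bar J=J_{\Delta(S')}$, $J_{\Gamma}$ and the isomorphism $\tau^*$.

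First I would produce $e'$. Since $q'\colon X'\to B$ is smooth of relative dimension one, the projection $W'\times_B X'\to W'$ is smooth of relative dimension one and the diagonal $\Delta\colon W'\hookrightarrow W'\times_B X'$ is a section of it; hence $\Delta(W')$ is an effective Cartier divisor, the ideal $J$ (locally principal by Notation~\ref{X'_script}) is invertible, and its conormal module satisfies
\[
J/J^2\;\cong\;\Delta^*\,\Omega^1_{(W'\times_B X')/W'}\;=\;\iota^*\,\Omega^1_{X'/B},
\]
where $\iota\colon W'\to X'$ is the canonical morphism. By Remark~\ref{Stable_Normal_Bundle} one has $\Omega^1_{X'/B}\cong\cc O_{X'}$, so $J/J^2$ is free of rank one over $k[W']$ (alternatively this follows from $J$ being locally principal together with $W'$ being semilocal). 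Taking $e'\in J$ to be any lift of a generator of $J/J^2$ along the surjection $J\twoheadrightarrow J/J^2$ settles the first assertion.

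Next, given any $e'\in J$ whose class modulo $J^2$ is a free basis of $J/J^2$, I would build $e$ as follows. By the Remark following Lemma~\ref{bar_J_and_J_Delta_prime}, the class $e'|_{\Delta^{(2)}(S')}$ of $e'$ in $\bar J/\bar J^2=J_{\Delta(S')}/J^2_{\Delta(S')}$ is a free basis of this rank one $k[S']$-module. Using the $k[\Gamma]$-linear isomorphism $\tau^*\colon J_{\Gamma}/J^2_{\Gamma}\xrightarrow{\ \sim\ }J_{\Delta'}/J^2_{\Delta'}$ of Lemma~\ref{J_Delta'_and_J_Gamma}, I would transport $e'|_{\Delta^{(2)}(S')}$ back to a free basis of $J_{\Gamma}/J^2_{\Gamma}$ and lift it arbitrarily to $e\in J_{\Gamma}$. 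Lemma~\ref{J_Delta'_and_J_Gamma} then says that this $e$ is a free generator of the rank one $k[W\times S']$-module $J_{\Gamma}$, which is exactly~(\ref{element_e}); and since $\tau\colon\Delta^{(2)}(S')\to\Gamma^{(2)}$ is the isomorphism induced by $\pi_W\times id$, the homomorphism $\tau^*$ is induced by $(\pi_W\times id)^*$, so $(\pi_W\times id)^*(e)\bmod J^2_{\Delta(S')}=\tau^*(e\bmod J^2_{\Gamma})=e'|_{\Delta^{(2)}(S')}$, which is~(\ref{elements_e_and_e'}).

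Finally, the last assertion is immediate: for any $e\in J_{\Gamma}$ satisfying~(\ref{element_e}), the element $(\pi_W\times id)^*(e)$ lies in $J_{\Delta(S')}$ because $\pi_W\times id$ carries $\Delta(S')$ into $\Gamma$, and it is a free generator of the rank one $k[W'\times S']$-module $J_{\Delta(S')}$ by the last statement of Lemma~\ref{J_Delta'_and_J_Gamma}. I do not expect a real obstacle here: the only non-formal point is the freeness of $J/J^2$ over $k[W']$ in the first step, and that is already forced by Remark~\ref{Stable_Normal_Bundle} (or by the semilocality of $W'$); the rest is a careful diagram chase matching $J$, $\bar J$, $J_{\Gamma}$ and $\tau^*$.
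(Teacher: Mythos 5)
Your proof is correct and follows the same chain of references the paper uses: transport $e'\bmod J^2_{\Delta(S')}$ through $(\tau^*)^{-1}$ to $J_\Gamma/J^2_\Gamma$, lift to $e\in J_\Gamma$, and invoke the last two statements of Lemma~\ref{J_Delta'_and_J_Gamma} for (\ref{element_e}), (\ref{elements_e_and_e'}) and the final assertion. The only place you diverge is in the existence of $e'$, where the paper simply cites the first part of Lemma~\ref{e'}(iii) while you re-derive it via $J/J^2\cong\iota^*\Omega^1_{X'/B}$ and Remark~\ref{Stable_Normal_Bundle}; that is a fuller justification of the same fact rather than a different route.
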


\begin{proof}(of Lemma \ref{e'_and_e})
Let $e'\in J$ be such that $e' \ \text{mod} \ J^2$ is a free basis of the $k[W']$-module $J/J^2$.
let $\bar e'\in \bar J\subset k[W'\times S']$ be its image in $\bar J$.
By Lemma \ref{bar_J_and_J_Delta_prime} $\bar J=J_{\Delta(S')}$.
Thus $\bar e'\in J_{\Delta(S')}$ is such that
$e' \ \text{mod} \ J^2_{\Delta(S')}$
is a free basis of the $k[S']$-module
$J_{\Delta(S')}/J^2_{\Delta(S')}$.
The morphism
$\pi_W\times id: W'\times S' \to W\times S'$ induces
the scheme isomorphism
$\tau: \Delta^{(2)}(S')\to \Gamma^{(2)}$
as in Lemma \ref{J_Delta'_and_J_Gamma}
and an isomorphism
$J_{\Gamma}/J^2_{\Gamma} \to J_{\Delta(S')}/J^2_{\Delta(S')}$
of the $k[\Gamma]$-modules.
Set
$$\bar e=(\tau^{-1})^*(\bar {e}')\in k[\Gamma^{(2)}],$$
where $\tau^*$ is the pull-back map induced by the scheme isomorphism
$\tau$. It's clear now that $\bar e$ is a free basis of the $k[\Gamma]$-module
$J_{\Gamma}/J^2_{\Gamma}$. Let $e\in J_{\Gamma}$ be such that $\bar e= e \ \text{mod} \ J^2_{\Gamma}$.
By the third statement of Lemma \ref{J_Delta'_and_J_Gamma} the element
$e$
is a free basis of the free rank one $k[W\times S']$-module $J_{\Gamma}$.
The equality (\ref{elements_e_and_e'}) is true by the construction
of the element $e$. The first assertion of the lemma is proved.

The second one follows from the last statement of Lemma
\ref{J_Delta'_and_J_Gamma}.
The third assertion is the first part of the assertion (iii) of Lemma
\ref{e'}.
\end{proof}

\begin{proof}(of Proposition \ref{h'_theta})
Consider a $W'$-scheme $W'\times \bar X'$ and a $W$-scheme $W\times \bar X'$ and the morphism
$\pi_W\times id: W'\times \bar X'\to W\times_B \bar X'$.
Consider the Cartier divisors $W'\times X'_{\infty}$ and $W\times X'_{\infty}$
on $W'\times \bar X'$ and on $W\times \bar X'$ respectively.
%and $s'_{\infty}$ be its global section
%vanishing exactly on $W'\times \bar X'_{\infty}$.
Let $L$ be the line bundle on $W\times \bar X'$ corresponding to the Cartier divisor
$W\times X'_{\infty}$
and $s_{\infty}$ be its global section
vanishing exactly on $W\times \bar X'_\infty$.
Let $L'$ be the line bundle on $W'\times \bar X'$ corresponding to the Cartier divisor
$W'\times X'_{\infty}$.
Clearly, $(\pi_W\times id)^*(L)=L'$. The vanishing locus of the section
$s':=(\pi_W\times id)^*(s)$
is exactly the Cartier divisor
$W\times \bar X'_\infty$.

Let $e'\in J$ and $e\in J_{\Gamma}$ be as in Lemma \ref{e'_and_e}. Choose and fix a positive integer $M$ big enough.
Then there is a global section $s$ of the bundle $L^{\otimes M}$ such that\\
(i) $s|_{W\times \bar X'_\infty}$ has no zeros,\\
(ii) $s|_{W\times S'}=e\cdot (s^{\otimes M}_{\infty})|_{W\times S'}$.\\
Also, there is a global section $s'$ of the bundle $(L')^{\otimes M}$ such that \\
(i') $s'|_{W'\times \bar X'_{\infty}}=(\pi_W\times id)^*(s|_{W\times \bar X'_{\infty}})$,\\
(ii') $s'|_{W'\times S'}=(\pi_W\times id)^*(e)\cdot (s'_{\infty})^{\otimes M}|_{W'\times S'}$,\\
(iii') $s'|_{\Delta^{(2)}(W')}=e'\cdot (s'_{\infty})^{\otimes M}|_{\Delta^{(2)}(W')}$.\\
Set $F=(s|_{W\times X'})/(s^{\otimes M}_{\infty}|_{W\times X'})\in k[W\times X']$,\\
$F_1=(\pi_W\times id)^*(F)\in k[W'\times X']$,\\
$F_0=(s'|_{W'\times X'})/((s')^{\otimes M}_{\infty}|_{W'\times X'})\in k[W'\times X']$,\\
$h_{\theta}=(1-\theta)\cdot F_0 + \theta\cdot F_1\in k[\bb A^1\times W'\times X']$.\\
It is straightforward to check that
$F$ and $h_{\theta}$ subject to the conditions (a)--(f) of Proposition
\ref{h'_theta}.
Check for instance that $h_{\theta}$ subject to the condition (b).
The property (iii') of the section $s'$ yields that for the closed subscheme
$Z'_{0}:=(h_{0})^{-1}(0)$
from Proposition \ref{h'_theta}
one has
$Z'_{0}=\Delta(W') \sqcup G$ (a disjoint union of closed subschemes).
Since
$F_0|_{W'\times S'}=(\pi_W\times id)^*(e)$
and
$(\pi_W\times id)^*(e)$ is a free basis of
of the free rank one
$k[W'\times S']$-module $J_{\Delta(S')}$,
we have
$Z'_{0}\cap W'\times S'=\Delta(S')$. But also
$$Z'_{0}\cap W'\times S'=(G\sqcup \Delta(W'))\cap W'\times S'=(G\cap W'\times S')\sqcup (\Delta(W')\cap W'\times S')=
(G\cap W'\times S')\sqcup \Delta(S').$$
Thus, $G\cap W'\times S'=\emptyset$. Whence the condition (b).

If we take $e'\in J$ as in the item (v) of Lemma \ref{e'}
and for this specific $e'$ take $e\in J_{\Gamma}$ as in Lemma \ref{e'_and_e},
then the functions $F$ and $h_{\theta}$ subject to the condition (g) as well.
Proposition \ref{h'_theta} is proved.
%The proof of Theorem
%\ref{Surj_Etale_exc}
%is completed.
\end{proof}

Let $j: (X'-S',X'-S')\to (X',X'-S')$
be the obvious morphism,
$can'': W'\hookrightarrow X'$
be the inclusion and
$\pi_W: W'\to W$ be the restriction of $\Pi: X'\to X$ to $W'$.
We construct below in this Section certain morphisms
\begin{itemize}
\item[(i)]
$\alpha\in Fr_N(W,X')$,
%and $b_G\in \bb ZF_N((W',W'-S')),(X'-S',X'-S'))$
\item[(ii)]
$\beta_G\in Fr_N(W',X'-S')$
\end{itemize}
such that $\alpha|_{W-S}$ run inside $X'-S'$
in the sense of Definition \ref{Runs_Inside}.
The morphism $\beta_G|_{W'-S'}$ automatically run inside of $X'-S'$
in the sense of Definition \ref{Runs_Inside}.
The morphisms $\alpha$ and $\beta_G$ will be chosen
such that under the notation of Definition \ref{Runs_Inside} one has equality
\begin{equation}
\label{Surjectivity_Part_Details_2}
[[\alpha]]\circ [[\pi_W]]-[[j]]\circ [[\beta_G]]=[[can'']]\circ [[\sigma^N_W]] \in \overline {\bb ZF}_N((W',W'-S')),(X',X'-S')).
\end{equation}
%Let $inc: (U,U-S)\to (W,W-S)$ and $inc': (U',U'-S')\to (W',W'-S')$
%be the morphisms of pairs defined by the inclusions
Let $inc: U\hookrightarrow W$ and $inc': U'\hookrightarrow W'$ be the inclusions.
%respectively.
Set
$a=\alpha\circ inc\in Fr_N(U,X')$, $b_G=\beta_G \circ inc' \in Fr_N(U',X'-S')$.
Clearly, $a|_{W-S}$ run inside of $X'-S'$ and $b_G|_{W'-S'}$ run inside of $X'-S'$
in the sense of Definition \ref{Runs_Inside}.
The equality (\ref{Surjectivity_Part_Details_2}) yields an equality
$$[[a]]\circ [[\pi_U]]-[[j]]\circ [[b_G]]=[[can']]\circ [[\sigma^N_U]] \in \overline {\bb ZF}_N((U',U'-S')),(X',X'-S')),$$
where $can': U'\hookrightarrow X'$, $\pi_U: U'\to U$ are as at the beginning of this Section.
{\it Thus the morphisms
$[[a]]$ and $[[b_G]]$ are such that the equality
(\ref{Surjectivity_Part_Details}) does hold.
}
{\it To complete the proof of Theorem
\ref{Surj_Etale_exc} it remains
%to prove Proposition
%\ref{h'_theta}
%and
to construct
$$\alpha\in Fr_N(W,X') \ \text{and} \ \beta_G\in Fr_N(W',X')$$
satisfying the equality
(\ref{Surjectivity_Part_Details_2}).
}
%Firstly, assuming Proposition
%\ref{h'_theta}
%we construct morphisms
%$\alpha$ and $\beta_G$
%such that the equality
%(\ref{Surjectivity_Part_Details_2})
%holds. Proposition \ref{h'_theta}
%will be proved at the end of this Section.
%These will complete the proof of Theorem
%\ref{Surj_Etale_exc}.

In the constructions below we will not distinguish between a reduced closed subscheme of a reduced scheme
and corresponding closed subset of that reduced scheme.
\begin{constr}\label{Constr_alpha}
Let $Z_{1,red}$ be the closed subset $\{F=0\}$ in $W\times X'$ regarded
as a closed subset in $W\times_B \bb A^N_B=W\times \bb A^N$
(recall that we agreed above to write $W\times X'$ for $W\times_B X'$ and c.t.r).
Let $(\cc V,\rho: \cc V\to \bb A^N_B,s: X'\to \cc V)$ be the \'{e}tale neighborhood
of $X'$ in $\bb A^N_B$ as in Proposition
\ref{Framing_Of_X'}.
Then
$(W\times \cc V,id \times \rho: W\times \cc V\to W\times \bb A^N,id\times s: W\times X'\to W\times \cc V)$ %(edited) W\times \bb A^N_B changed to W\times \bb A^N
is an \'{e}tale neighborhood
of
$W\times X'$ in $W\times \bb A^N$. Hence %(edited) W\times \bb A^N_B changed to W\times \bb A^N
$(W\times \cc V,id \times \rho: W\times \cc V \to W\times \bb A^N,(id\times s)|_{Z_{1,red}}: Z_{1,red} \to W\times \cc V)$
is an \'{e}tale neighborhood
of
$Z_{1,red}$ in $W\times \bb A^N$.
For any $i=1,...,N-1$ we will write below for simplicity
$\phi_i$ for $pr^*_{\cc V}(\phi_i)$.
Set
$$\alpha=(Z_{1,red},id\times \rho:  W\times \cc V \to W\times \bb A^N,(id\times r)^*(F),\phi_1,...,\phi_{N-1};
W\times \cc V\xrightarrow{pr_{\cc V}} \cc V \xrightarrow{r} X') \in Fr_N(W,X'),$$
where $r: \cc V\to X'$ is as in Proposition
\ref{Framing_Of_X'}, $F\in k[W\times X']$ is as in
Proposition
\ref{h'_theta}.
By the item (f) of Proposition \ref{h'_theta} the morphism $\alpha$ is such that $\alpha|_{W-S}$ run inside of $X'-S'$.
%Thus $\Phi$ defines a morphism
%$\langle\langle \Phi \rangle\rangle \in \bb ZF_N((W,W-S)),(X',X'-S'))$
%Set
%$$\alpha=\langle\langle \Phi \rangle\rangle \in \bb ZF_N((W,W-S)),(X',X'-S')).$$
\end{constr}

\begin{constr}\label{Constr_beta_G}
Let $\{h_0=0\}$ be the closed subset in $W'\times X'$ as in Proposition
\ref{h'_theta}
regarded
as a closed subset in $W'\times_B \bb A^N_B=W'\times \bb A^N$.
By the item (b) of Proposition \ref{h'_theta} one has
$\{h_0=0\}=\Delta(W')\sqcup G_{red}$ and $G_{red}\subset W'\times (X'-S')$.
Set
$\cc V'=r^{-1}(X'-S')$, $r'=r|_{\cc V'}: \cc V'\to X'-S'$.
Hence
$$(W'\times \cc V'-\Delta(W'),id \times \rho: W'\times \cc V'-\Delta(W') \to W'\times \bb A^N,(id\times s)|_{G_{red}}: G_{red} \to
(W'\times \cc V'-\Delta(W'))$$ %(edited) W'\times \bb A^N_B changed to W'\times \bb A^N
is an \'{e}tale neighborhood of
$G_{red}$ in $W'\times \bb A^N$. Set $(W'\times \cc V')^{\circ}=W'\times \cc V'-\Delta(W')$ %(edited) W'\times \bb A^N_B changed to W'\times \bb A^N
and
$$\beta_G=(G_{red},id \times \rho: (W'\times \cc V')^{\circ} \to W'\times \bb A^N,(id\times r')^*(h_0|_{W'\times (X'-S')}),\underline {\phi^{\circ}}';
(W'\times \cc V')^{\circ}\xrightarrow{r'\circ pr_{\cc V'}}  X'-S'),$$ %(edited) W'\times \bb A^N_B changed to W'\times \bb A^N
where
$\underline {\phi^{\circ}}':=(\phi'_1,...,\phi'_{N-1})|_{(W'\times \cc V')^{\circ}}$
with $\phi'_i$'s as in Lemma \ref{e'}.
Clearly,
$\beta_G\in Fr_N(W',X'-S')$.
\end{constr}

\begin{prop}\label{Surjectivity_Part_Details_2_2}
The equality (\ref{Surjectivity_Part_Details_2}) is true for the morphisms
$\alpha$ and $\beta_G$ from the last two constructions.
\end{prop}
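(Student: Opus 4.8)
The plan is to verify (\ref{Surjectivity_Part_Details_2}) through a short chain of identifications inside $\overline {\bb ZF}_N((W',W'-S'),(X',X'-S'))$, following the proof of the analogous statement in \cite[Section 8]{GP4}; the only step where the characteristic of $k$ really intervenes is the last one, and there it is item (g) of Proposition \ref{h'_theta} that saves the day. First I rewrite $[[\alpha]]\circ[[\pi_W]]$ as the class of the level-$N$ correspondence attached to $h_1=(\pi_W\times id_{X'})^*(F)$. By the relation $f^*(\Phi)=\Phi\circ f$ one has $\alpha\circ\pi_W=\pi_W^*(\alpha)$, and pulling back the data of Construction \ref{Constr_alpha} along $\pi_W$ produces the explicit framed correspondence $\Phi_{h_1}$ on the \'etale neighborhood $W'\times\cc V$, with framing functions $(id\times r)^*(h_1),\phi'_1,\dots,\phi'_{N-1}$, structure map $r\circ pr_{\cc V}$ and support the reduced closed subset $\{h_1=0\}\subset W'\times X'$; the point is the identity $(id\times r)^*\big((\pi_W\times id)^*F\big)=(\pi_W\times r)^*F=(\pi_W\times id_{\cc V})^*\big((id\times r)^*F\big)$. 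By items (e)--(f) of Proposition \ref{h'_theta}, base changed along $\pi_W$, this support is finite over $W'$ and its part over $W'-S'$ is carried into $X'-S'$, so $[[\alpha]]\circ[[\pi_W]]=[[\Phi_{h_1}]]$ as morphisms of pairs.

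Next I feed the homotopy $h_\theta$ into Corollary \ref{Critaria_for_H}. Consider the explicit framed correspondence $\Phi_{h_\theta}$ over $\bb A^1\times W'$ on $\bb A^1\times W'\times\cc V$, with framing $(id\times r)^*(h_\theta),\phi'_1,\dots,\phi'_{N-1}$ and structure map $r\circ pr_{\cc V}$; since $\phi_1,\dots,\phi_{N-1}$ cut out $s(X')$ in $\cc V$, its support is $Z'_\theta=\{h_\theta=0\}\subset\bb A^1\times W'\times s(X')$, which is finite over $\bb A^1\times W'$ by item (a), and on $\bb A^1\times W'\times s(X')$ the structure map is simply the projection to $X'$. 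Hence item (d) of Proposition \ref{h'_theta} says the part of $Z'_\theta$ over $\bb A^1\times(W'-S')$ lands in $X'-S'$, and Corollary \ref{Critaria_for_H} gives $[[\Phi_{h_1}]]=[[\Phi_{h_0}]]$ with $\Phi_{h_0}=\Phi_{h_\theta}|_{\theta=0}$. By item (b) the support of $\Phi_{h_0}$ is the disjoint union $\Delta(W')\sqcup G$ with $G\subset W'\times(X'-S')$, and its part over $W'-S'$ maps into $X'-S'$ (the component $\Delta(W')$ goes there via $can''$), so Lemma \ref{Criteria_for_pairs} yields $[[\Phi_{h_0}]]=[[\Phi^{\Delta}_{h_0}]]+[[\Phi^{G}_{h_0}]]$, where $\Phi^{\Delta}_{h_0}$, resp.\ $\Phi^{G}_{h_0}$, is obtained from $\Phi_{h_0}$ by deleting the component $G$, resp.\ $\Delta(W')$, of the support from the \'etale neighborhood. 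Since $G$ is disjoint from $\Delta(W')$ and contained in $W'\times(X'-S')$, restricting the neighborhood of $\Phi^{G}_{h_0}$ further to $W'\times\cc V'-\Delta(W')$ with $\cc V'=r^{-1}(X'-S')$ changes neither its equivalence class nor its support and identifies it with the correspondence $\beta_G$ of Construction \ref{Constr_beta_G}; thus $[[\Phi^{G}_{h_0}]]=[[j]]\circ[[\beta_G]]$.

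It remains to identify $[[\Phi^{\Delta}_{h_0}]]$ with the right-hand side $[[can'']]\circ[[\sigma^N_W]]$ of (\ref{Surjectivity_Part_Details_2}). The support $\Delta(W')$ is the graph of $can''$ transported into $W'\times\bb A^N$ through the \'etale neighborhood, and its framing functions $(id\times r)^*(h_0),\phi'_1,\dots,\phi'_{N-1}$ are, modulo the square of the ideal of $\Delta(W')$, the free basis (\ref{basis_e_prime_phi}) of the conormal $k[W']$-module, which by item (iv) of Lemma \ref{e'} is obtained from the basis (\ref{basis_t_prime}) of the translated coordinates $t'_i$ through the matrix $A(h_0)\in GL_N(k[W'])$; by item (g) of Proposition \ref{h'_theta}, $\det A(h_0)=1$. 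Since $W'=\spec\cc O_{X',\{s'_0,\dots,s'_n\}}$ is semilocal, $SL_N(k[W'])=E_N(k[W'])$, so $A(h_0)$ is a product of elementary matrices, and an elementary change of a framing is realised by an explicit $\bb A^1$-homotopy of framed correspondences; this replaces the framing of $\Phi^{\Delta}_{h_0}$ by $(id\times\rho)^*(t'_1),\dots,(id\times\rho)^*(t'_N)$ without altering the class in $\overline {\bb ZF}_N$. A standard straightening homotopy then slides $\Delta(W')$ onto the zero section $W'\times\{0\}$ and $(t'_1,\dots,t'_N)$ onto the coordinates $(t_1,\dots,t_N)$, keeping the structure map equal to $can''$ along the supports, and produces exactly the class $[[can'']]\circ[[\sigma^N_W]]$. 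All homotopies involved have supports whose parts over $W'-S'$ remain inside $X'-S'$, so they take place in $\overline {\bb ZF}_N((W',W'-S'),(X',X'-S'))$. Assembling the four identifications and rearranging gives (\ref{Surjectivity_Part_Details_2}).

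The crux is this last step, and it is exactly here that the characteristic-$2$ restriction of \cite{GP4} disappears: bookkeeping of the kind in \cite[Lemma 8.2]{GP4} only pins down $\det A(h_0)$ up to sign, whereas item (v) of Lemma \ref{e'} --- hence item (g) of Proposition \ref{h'_theta} --- lets us arrange $\det A(h_0)=1$, after which the change of framing is an elementary, hence $\bb A^1$-contractible, transformation over the semilocal ring $k[W']$, with no hypothesis on the characteristic. The remaining work --- checking that the elementary homotopy and the straightening homotopy keep their supports over $W'-S'$ inside $X'-S'$, so that the whole computation is legitimate in the category of pairs --- is routine but must be done with care.
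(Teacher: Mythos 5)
Your proof follows exactly the paper's chain of identifications: $[[\alpha]]\circ[[\pi_W]]=[[H_1]]=[[H_0]]$ via the homotopy $H_\theta$ and Corollary~\ref{Critaria_for_H}, split the support $Z'_0=\Delta(W')\sqcup G$ by Lemma~\ref{Criteria_for_pairs} to get $[[H_0]]=[[j]]\circ[[\beta_G]]+[[\gamma]]$, and finally straighten $[[\gamma]]$ to $[[can'']]\circ[[\sigma^N_{W'}]]$ using $\det A(h_0)=1$ from Proposition~\ref{h'_theta}(g). The only difference is cosmetic: where the paper cites \cite[Theorem 13.3]{GP4} for the straightening, you sketch that theorem's proof inline (affine shift, $SL_N(k[W'])=E_N(k[W'])$ over the semilocal ring, elementary homotopy, slide of framing functions to coordinates); your sketch is correct but reproduces material already encapsulated in the cited result, and you slip once in attributing the transformation matrix to item (iv) of Lemma~\ref{e'} when it is defined in item (v).
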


To prove this Proposition we need three more constructions and two lemmas stated below.
\begin{constr}\label{Constr_beta}
Let $Z'_{0,red}$ be the closed subset $\{h_0=0\}$ in $W'\times X'$ as in Proposition
\ref{h'_theta}
regarded
as a closed subset in $W'\times_B \bb A^N_B=W'\times \bb A^N$.
Then
$$(W'\times \cc V',id \times \rho: W'\times \cc V' \to W'\times \bb A^N,(id\times s)|_{Z'_{0,red}}: Z'_{0,red} \to
W'\times \cc V')$$ %(edited) W'\times \bb A^N_B changed to W'\times \bb A^N
is an \'{e}tale neighborhood
of
$Z'_{0,red}$ in $W'\times \bb A^N$. %(edited) W'\times \bb A^N_B changed to W'\times \bb A^N
Set
$$\beta:=(Z'_{0,red},id \times \rho: W'\times \cc V' \to W'\times \bb A^N,(id\times r)^*(h_0),\underline {\phi}';
W'\times \cc V'\xrightarrow{r\circ pr_{\cc V'}}  X' )\in Fr_N(W',X'),$$ %(edited) W'\times \bb A^N_B changed to W'\times \bb A^N
where $\underline {\phi}'=(\phi'_1,...,\phi'_{N-1})$
with $\phi'_i$'s as in Lemma \ref{e'}.
\end{constr}

\begin{constr}\label{Constr_H}
Let $Z'_{\theta}$ be the closed subset
$\{h_{\theta}=0\}$ in $\bb A^1\times W'\times X'$ as in Proposition
\ref{h'_theta}
regarded
as a closed subset in $\bb A^1\times W'\times_B \bb A^N_B=\bb A^1\times W'\times \bb A^N$.
Then
$$(\bb A^1\times W'\times \cc V,id \times \rho: \bb A^1\times W'\times \cc V \to \bb A^1\times W'\times \bb A^N,(id\times s)|_{Z'_{\theta,red}}: Z'_{\theta,red} \to
\bb A^1\times W'\times \cc V)$$ %(edited) W'\times \bb A^N_B changed to W'\times \bb A^N
is an \'{e}tale neighborhood
of
$Z'_{\theta,red}$ in $\bb A^1\times W'\times \bb A^N$. Set %(edited) W'\times \bb A^N_B changed to W'\times \bb A^N
$$H_{\theta}:=(Z'_{\theta,red},id \times \rho: \bb A^1\times W'\times \cc V \to \bb A^1\times W'\times \bb A^N,(id\times r)^*(h_\theta),\underline {\phi}'_{\theta};
\bb A^1\times W'\times \cc V\xrightarrow{r\circ pr_{\cc V}}  X' ),$$ %(edited) W'\times \bb A^N_B changed to W'\times \bb A^N
where $\underline {\phi}'_{\theta}=pr^*_{ W'\times \cc V}(\phi'_1,...,\phi'_{N-1})$
with $\phi'_i$'s as in Lemma \ref{e'}.
Clearly, $H_{\theta}\in Fr_N(\bb A^1\times W',X')$.
\end{constr}

\begin{constr}\label{Constr_gamma}
Set $(W'\times \cc V)^{\circ\circ}=W'\times \cc V-(id\times s)(G)$ and
$$\gamma=(\Delta(W'),id \times \rho:(W'\times \cc V)^{\circ\circ} \to W'\times \bb A^N,((id\times r)^*(h_0))|_{(W'\times \cc V)^{\circ\circ}}),\underline {\phi^{\circ\circ}}';
(W'\times \cc V)^{\circ\circ}\xrightarrow{r\circ pr_{\cc V}}  X'),$$ %(edited) W'\times \bb A^N_B changed to W'\times \bb A^N
where $\underline {\phi^{\circ\circ}}'=\underline {\phi}'|_{(W'\times \cc V)^{\circ\circ}}$ with $\underline {\phi}'$ as in Construction
\ref{Constr_beta}.
\end{constr}
The proofs of the following two lemmas are straightforward
\begin{lem}\label{alpha_pi_and_H1}
One has an equalities in $Fr_N(W',X')$
$$\alpha \circ \pi_{W}=(Z'_{1,red},id\times \rho:  W'\times \cc V \to W'\times \bb A^N,(id\times r)^*(h_1),\phi'_1,...,\phi'_{N-1};
W'\times \cc V\xrightarrow{pr_{\cc V}} \cc V \xrightarrow{r} X')=H_1,$$ %(edited) W'\times \bb A^N_B changed to W'\times \bb A^N
where $H_1:=H_{\theta}\circ i_1\in Fr_N(W',X')$.
%Moreover,
%$\alpha|_{W-S}$ run inside $X'-S'$ and $H_1|_{W'-S'}$ run inside $X'-S'$.
\end{lem}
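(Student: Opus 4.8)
The plan is to prove both equalities by mechanically unwinding the definitions of composition of framed correspondences and the explicit data of $\alpha$ and $H_\theta$; as the paper announces, the content is pure bookkeeping, so I will only record which identities are actually used. The starting point is the formula $\Phi\circ f=f^{*}(\Phi)$ from Section \ref{ohoho}, valid when $f$ is an honest morphism of schemes: since $\pi_W\colon W'\to W$ and $i_1\colon W'\to\bb A^1\times W'$ are morphisms of schemes, we have $\alpha\circ\pi_W=\pi_W^{*}(\alpha)$ and $H_\theta\circ i_1=i_1^{*}(H_\theta)$. Thus it suffices to describe these two pullbacks as explicit framed correspondences and check that each one coincides, datum for datum, with the explicit correspondence written in the middle of the statement.

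First I would compute $\pi_W^{*}(\alpha)$ by base change along $\pi_W\times\id$ applied to the explicit datum of $\alpha$ from Construction \ref{Constr_alpha}. Because $\pi_W$ is a $B$-morphism one gets $W'\times_W(W\times_B\cc V)=W'\times_B\cc V=W'\times\cc V$, so the étale neighborhood $W\times\cc V\to W\times\bb A^N$ pulls back to $W'\times\cc V\to W'\times\bb A^N$; the functions $pr_{\cc V}^{*}(\phi_i)$ pull back to $\phi'_i=pr_{\cc V}^{*}(\phi_i)$ of Lemma \ref{e'}; and the structure morphism $r\circ pr_{\cc V}$ pulls back to $r\circ pr_{\cc V}$. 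The one identification requiring an earlier result is the remaining function: the pullback of $(\id_W\times r)^{*}(F)$ along $\pi_W\times\id_{\cc V}$ equals $(\id_{W'}\times r)^{*}\bigl((\pi_W\times\id_{X'})^{*}(F)\bigr)$, which is $(\id_{W'}\times r)^{*}(h_1)$ by item (c) of Proposition \ref{h'_theta}; for the same reason the support $Z_{1,red}=\{F=0\}$ pulls back to the closed subset $\{h_1=0\}=Z'_{1,red}$. Assembling these, $\pi_W^{*}(\alpha)$ is precisely the middle explicit correspondence, which is the first equality.

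Then I would compute $i_1^{*}(H_\theta)$ by restricting, along $i_1$, the explicit datum of $H_\theta$ from Construction \ref{Constr_H}. This replaces $\bb A^1\times W'\times\cc V\to\bb A^1\times W'\times\bb A^N$ by $W'\times\cc V\to W'\times\bb A^N$; the functions $\underline{\phi}'_\theta=pr_{W'\times\cc V}^{*}(\phi'_1,\dots,\phi'_{N-1})$ restrict to $(\phi'_1,\dots,\phi'_{N-1})$ and the morphism $r\circ pr_{\cc V}$ is unchanged; the support $Z'_{\theta,red}=\{h_\theta=0\}$ restricts to $\{h_\theta|_{\theta=1}=0\}=\{h_1=0\}=Z'_{1,red}$; and $(\id\times r)^{*}(h_\theta)$ restricts to $(\id\times r)^{*}(h_1)$ since $h_1=h_\theta|_{1\times W'\times X'}$. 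Hence $i_1^{*}(H_\theta)$ is again the middle explicit correspondence, so $H_1:=H_\theta\circ i_1$ equals it as well, completing the proof.

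I do not expect a genuine obstacle here: this is a compatibility lemma whose entire content is matching up pieces. The only place deserving care is the base-change bookkeeping over $B$ — verifying that $W'\times\cc V$ really is the fibre product of $W\times\cc V$ along $\pi_W$, and that the first-coordinate function and the support pull back to $(\id\times r)^{*}(h_1)$ and $Z'_{1,red}$ — and this is exactly where item (c) of Proposition \ref{h'_theta} is used.
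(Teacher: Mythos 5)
Your proof is correct and, since the paper declares the lemma ``straightforward'' and gives no argument, it is precisely the bookkeeping the paper intends: both equalities follow from $\Phi\circ f=f^*(\Phi)$, with the only non-trivial identification being $(\pi_W\times\id_{\cc V})^{*}(\id_W\times r)^{*}(F)=(\id_{W'}\times r)^{*}(h_1)$, which is exactly item~(c) of Proposition~\ref{h'_theta}. Your careful verification that the support, the \'{e}tale neighborhood, the framing functions, and the structure morphism all match datum-for-datum is exactly right and leaves no gap.
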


\begin{lem}\label{H0_and_beta}
One has an equalities $H_0:=H_{\theta}\circ i_0=\beta$ in $Fr_N(W',X')$. Under the notation \ref{Main} one has an equality
$$\langle\beta \rangle=\langle j \rangle \circ \langle \beta_G \rangle + \langle \gamma \rangle \in \bb ZF_N(W',X').$$
%Moreover, under the notation from Definition \ref{} one has an equality in $\bb ZF_N((W',W'-S'),(X',X'-S'))$
%$$\langle\langle \beta \rangle\rangle= \langle\langle j \rangle\rangle \circ \langle\langle \beta_G \rangle\rangle + \langle\langle \gamma \rangle\rangle.$$
\end{lem}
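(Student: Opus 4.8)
We must prove Lemma \ref{H0_and_beta}: the identity $H_0 := H_\theta\circ i_0 = \beta$ in $Fr_N(W',X')$, together with the decomposition $\langle\beta\rangle = \langle j\rangle\circ\langle\beta_G\rangle + \langle\gamma\rangle$ in $\bb ZF_N(W',X')$.

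\begin{proof}
The first equality is immediate from the definitions. Restricting $H_\theta$ from Construction \ref{Constr_H} along $i_0: W'\hookrightarrow \bb A^1\times W'$ replaces $\bb A^1\times W'\times \cc V$ by $W'\times \cc V$, replaces the support $Z'_{\theta,red}$ by $Z'_{0,red}=\{h_0=0\}$, replaces $(id\times r)^*(h_\theta)$ by $(id\times r)^*(h_0)$, and leaves the auxiliary functions $\underline\phi'$ and the morphism $r\circ pr_{\cc V}$ to $X'$ unchanged; this is exactly the explicit framed correspondence $\beta$ of Construction \ref{Constr_beta}. Hence $H_0=\beta$ on the nose, not merely up to the equivalence relation on explicit framed correspondences.

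For the second equality, the key input is item (b) of Proposition \ref{h'_theta}, which gives the disjoint decomposition of closed subschemes $Z'_{0}=\Delta(W')\sqcup G$ with $G\subset W'\times(X'-S')$. Passing to the reduced closed subsets we get $Z'_{0,red}=\Delta(W')\sqcup G_{red}$ inside $W'\times\bb A^N$. Thus the support of $\beta$ is a disjoint union of two closed subsets, so by Lemma \ref{Disjoint_Support} the class $\langle\beta\rangle$ decomposes in $\bb ZF_N(W',X')$ as the sum of the two framed correspondences obtained from $\beta$ by deleting one component of the support from the \'etale neighborhood. Deleting $G_{red}$ (i.e.\ restricting the neighborhood $W'\times\cc V'$ to $(W'\times\cc V)^{\circ\circ}=W'\times\cc V-(id\times s)(G)$ and keeping the same framing functions and structure morphism) yields precisely $\gamma$ of Construction \ref{Constr_gamma}; deleting $\Delta(W')$ (i.e.\ restricting to $(W'\times\cc V')^\circ=W'\times\cc V'-\Delta(W')$) yields the framed correspondence whose support is $G_{red}$, whose framing is $(id\times r')^*(h_0|_{W'\times(X'-S')}),\underline{\phi^\circ}'$, and whose structure morphism factors through $X'-S'$ via $r'=r|_{\cc V'}$ --- this is $\beta_G$ of Construction \ref{Constr_beta_G}, post-composed with the open inclusion $j:(X'-S')\hookrightarrow X'$. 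One must check here that $\cc V'=r^{-1}(X'-S')$ already contains $G_{red}$, which holds because $G_{red}\subset W'\times(X'-S')$ and $r\circ pr_{\cc V}$ maps the relevant neighborhood to $X'$ compatibly with $\rho$; hence restricting the neighborhood of $Z'_{0,red}$ to the complement of $\Delta(W')$ and simultaneously to $\cc V'$ does not alter the support $G_{red}$ and produces exactly $j\circ\beta_G$. Combining, $\langle\beta\rangle = \langle j\circ\beta_G\rangle+\langle\gamma\rangle = \langle j\rangle\circ\langle\beta_G\rangle+\langle\gamma\rangle$ in $\bb ZF_N(W',X')$.

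The only genuinely delicate point is bookkeeping: one must verify that the two explicit framed correspondences produced by Lemma \ref{Disjoint_Support} from $\beta$ coincide, as equivalence classes in $Fr_N$, with $j\circ\beta_G$ and with $\gamma$ respectively --- in particular that the \'etale neighborhoods match after the indicated open restrictions and that the framing functions restrict correctly (using $\phi^{\circ\circ}{}'=\underline\phi'|_{(W'\times\cc V)^{\circ\circ}}$ and $\underline{\phi^\circ}'=(\phi'_1,\dots,\phi'_{N-1})|_{(W'\times\cc V')^\circ}$, both coming from Lemma \ref{e'}). All of this is routine once item (b) of Proposition \ref{h'_theta} is in hand, which is why the statement is asserted to have a straightforward proof.
\end{proof}
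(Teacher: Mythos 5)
Your proof is correct and is exactly the argument the authors have in mind: the paper itself dismisses this lemma (together with Lemma \ref{alpha_pi_and_H1}) with "the proofs of the following two lemmas are straightforward," and your two steps --- reading off $H_0=\beta$ from the definitions, then applying Lemma \ref{Disjoint_Support} to the disjoint decomposition $Z'_{0,red}=\Delta(W')\sqcup G_{red}$ from Proposition \ref{h'_theta}(b) and matching the two summands with $\gamma$ and $j\circ\beta_G$ --- are the intended computation. One small caveat: the paper's Construction \ref{Constr_beta} writes the \'etale neighborhood as $W'\times\cc V'$, which appears to be a typo for $W'\times\cc V$ (since $\cc V'=r^{-1}(X'-S')$ misses $\Delta(W')$ over $S'$), so your claim that $H_0$ equals $\beta$ "on the nose" is correct once that is fixed; in any case the two are equal in $Fr_N(W',X')$, which is all the lemma asserts.
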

%By Theorem \ref{Spriamlenie_1} one has equality
%$[[\gamma]]=[[can'']]\circ [[\sigma^N_{W'}]]\in \overline {\bb ZF}_N((W',W'-S'),(X',X'-S'))$.
%{\it Prove now the Claim.}
\begin{proof}(of Proposition \ref{Surjectivity_Part_Details_2_2}).
Using Proposition \ref{h'_theta}, Lemmas \ref{alpha_pi_and_H1}, \ref{H0_and_beta}, results, notation and definitions from Section
\ref{Notation_Agreements}
we get a chain of equalities in $\overline {\bb ZF}_N((W',W'-S'),(X',X'-S'))$
$$[[\alpha]]\circ [[\pi_{W}]]=[[H_1]]=[[H_0]]=[[\beta]]=[[j]]\circ [[\beta_G]]+[[\gamma]],$$
where $[[\beta_G]]\in \overline {\bb ZF}_N((W',W'-S'),(X'-S',X'-S'))$.
By the item (v) of Lemma \ref{e'} and
\cite[Theorem 13.3]{GP4} one has an equality
$[[\gamma]]=[[can'']]\circ [[\sigma^N_{W'}]]$
in $\overline {\bb ZF}_N((W',W'-S'),(X',X'-S'))$.
Proposition \ref{Surjectivity_Part_Details_2_2} is proved.
\end{proof}
\begin{proof}(of Theorem \ref{Surj_Etale_exc})
Let $inc: U\hookrightarrow W$ and $inc': U'\hookrightarrow W'$ be the inclusions.
Set
$a=\alpha\circ inc\in Fr_N(U,X')$, $b_G=\beta_G \circ inc' \in Fr_N(U',X'-S')$.
Clearly, $a|_{W-S}$ run inside of $X'-S'$ and $b_G|_{W'-S'}$ run inside of $X'-S'$
in the sense of Definition \ref{Runs_Inside}.
The equality (\ref{Surjectivity_Part_Details_2}) yields an equality
$$[[a]]\circ [[\pi_U]]-[[j]]\circ [[b_G]]=[[can']]\circ [[\sigma^N_U]] \in \overline {\bb ZF}_N((U',U'-S')),(X',X'-S')),$$
where $can': U'\hookrightarrow X'$, $\pi_U: U'\to U$ are as at the beginning of this Section.
{\it Thus the morphisms
$[[a]]$ and $[[b_G]]$ are such that the equality
(\ref{Surjectivity_Part_Details}) does hold.
}
This completes the proof of Theorem \ref{Surj_Etale_exc}.
\end{proof}

\section{Appendix}
Consider the elementary distinguished square
(\ref{Nisn_square}).
Let $S=X-V$ and $S'=X'-V'$ be closed subschemes equipped with reduced structures.
Let $x\in S$ and $x' \in S'$ be the closed points as in Section
\ref{section_surjectivity}
such that $\Pi(x')=x$.
%Let
%$U=Spec(\cc O_{X,x})$
%and
%$U'=Spec(\cc O_{X',x'})$.
{\it The main aim of this Section is to shrink $X$ and $X'$ appropriately and to 
construct a commutative diagram of the form (\ref{SquareDiagram_new}) 
subjecting to the conditions described right below the diagram 
(\ref{SquareDiagram_new}).
}

Let $in:X^{\circ}\hookrightarrow X$ and $in':
(X')^{\circ}\hookrightarrow X'$ be open such that
\begin{itemize}
\item[(1)]
$x\in X^{\circ}$,
\item[(2)]
$x'\in (X')^{\circ}$,
\item[(3)]
$\Pi((X')^{\circ})\subset X^{\circ}$,
\item[(4)]
the square
$$\xymatrix{V'\cap (X')^{\circ} \ar[r]\ar[d]& (X')^{\circ}\ar^{\Pi|_{(X')^{\circ}}}[d]\\
               V\cap X^{\circ}\ar[r]&X^{\circ}}$$
is an elementary distinguished square.
\end{itemize}

\begin{rem}
\label{Srinking_X_and_X'} One way of shrinking $X$ and $X'$ such
that properties $(1)-(4)$ are fulfilled is as follows. Replace $X$
by an affine open $X_{new}$ containing $x$ and then replace $X'$
by $X'_{new}=\Pi^{-1}(X_{new})$. 
For any $X$-scheme $p: T \to X$ we will write below $T_{new}$ for $p^{-1}(X_{new})$.
For any morphism of $r: T\to T'$ of $X$-schemes we will write $r_{new}$
for the obvious morphism $T_{new} \to T'_{new}$.
\end{rem}

Let $X'_n$ be the normalization of $X$ in $Spec(k(X'))$. Let $\Pi_n:
X'_n \to X$ be the corresponding finite morphism. Since $X'$ is
$k$-smooth it is an open subscheme of $X'_n$. Let $Y''=X'_n - X'$.
It is a closed subset in $X'_n$. Since $\Pi|_{S'}: S' \to S$ is an
isomorphism of schemes, then $S'$ is closed in $X'_n$. Thus $S' \cap Y''
= \emptyset$. Hence there is a function $f\in k[X'_n]$ such that
$f|_{Y''}=0$ and $f|_{S'}=1$.

\begin{defs}
\label{X'_0} Set $X'_{0}=(X'_n)_f$, $Y'=\{f=0\}$,
$Y=\Pi_n(Y'_{red})\subset X$. Note that $X'_{0}\subseteq X'$ and $X'_{0}$ is an affine
$k$-variety as a principal open subset of the affine $k$-variety
$X'_n$. We regard $Y'$ as an effective Cartier divisor of $X'_n$.
The subset $Y$ is closed in $X$, because $\Pi_n$ is finite. Set
$\Pi_{0}=\Pi|_{X'_{0}}$.
\end{defs}

\begin{rem}
\label{X'andX'_new} We have $\Pi^{-1}_{0}(S)=S'$. Therefore
the varieties $X$ and $X'_{0}$ are subject to properties $(1)-(4)$
of the present section. 
One has $X'_0=(X'_n)_f$, $X'_0=X'_n-Y'$, $Y=\Pi_n(Y'_{red})$ and $Y$ is closed in $X$.
Below we will work with this $X'_{0}$. 
However, we will write $X'$ for $X'_{0}$ and $\Pi$ for $\Pi_{0}$.

So, $X'=(X'_n)_f$, $X'=X'_n-Y'$, $Y=\Pi_n(Y'_{red})$ and $Y$ is closed in $X$.
\end{rem}

\begin{rem}
\label{Elementary_fibr} Take $X$ and $X'$ as at the end of the Remark
\ref{X'andX'_new}. Replacing $X$ with an affine open $X_{new}$ and $X'$ with $X'_{new}$ as described in Remark
\ref{Srinking_X_and_X'},
%in such a way that the condition $(1)$ from this section is secured
and using \cite[Prop.7.2]{GP4}
%\ref{CartesianDiagramArtin}
one can find an almost elementary fibration 
$q: X_{new}\to B$
in the sense of
\cite[Defn.7.1]{GP4}
%\ref{DefnElemFib}
%of~\cite{PSV}
(here $B$ is
affine open in $\bb P^{n-1}$). Moreover 
one can find an almost elementary fibration
$q: X_{new}\to B$
such that 
$$q|_{Y_{new}\cup S_{new}}: Y_{new}\cup S_{new} \to B$$ 
is finite,
$\omega_{B/k}\cong \cc O_B$, $\omega_{X_{new}/k}\cong \cc O_{X_{new}}$.

The scheme $X'_{new}$ will be regarded below as a $B$-scheme via
the morphism $q\circ \Pi_{new}$.
\end{rem}

%Let $X$ and $X'$ be as in Remark \ref{Elementary_fibr} and 
Let
$q: X_{new} \to B$ be the almost elementary fibration from
Remark \ref{Elementary_fibr}.
Since $q: X_{new} \to B$ is an almost elementary fibration
there is a commutative diagram of the form
(see \cite[Defn.7.1]{GP4})
\begin{equation}
\label{SquareDiagram_2}
    \xymatrix{
     X_{new}\ar[drr]_{q}\ar[rr]^{j}&&
\bar X_{new}\ar[d]_{\overline q}&&X_{new,\infty}\ar[ll]_{i}\ar[lld]^{q_{\infty}} &\\
     && B  &\\    }
\end{equation}
with morphisms $j$, $\overline q$, $i$, $q_{\infty}$ subjecting the conditions
(i)--(iv) from
\cite[Defn.7.1]{GP4}.

The composite morphism
$X'_{new} \xrightarrow{\Pi_{new}} X_{new} \xrightarrow{j} \bar X_{new}$
is quasi-finite. Let
$\bar X'_{new}$ be the normalization of $\bar X_{new}$ in
$Spec(k(X'_{new}))=Spec(k(X'))$.
Let $\bar \Pi: \bar X'_{new} \to \bar X_{new}$
be the canonical morphism (it is finite and surjective).
Then
$\bar \Pi^{-1}(X_{new})$
coincides with the normalization of $X_{new}$ in
$Spec(k(X'))$. 
So, 
$\bar \Pi^{-1}(X_{new})=X'_{n,new}$, 
$\bar \Pi|_{X'_{n,new}}=\Pi_{n,new}: X'_{n,new}\to X_{new}$
under the notation of the Remark 
\ref{Srinking_X_and_X'}
and $Y'_{new}\subseteq X'_{n,new}$ is a closed subscheme. 
Furthermore, 
$X'_{new}=X'_{n,new}-Y'_{new}$.

%Let $f':=f|_{(\bar \Pi)^{-1}(X)}$,
%where
%$f$ is from Definition
%\ref{X'_new}.
%Let $Y'_{new}=\{f'=0\}$ be the closed subscheme of $(\bar \Pi)^{-1}(X)$.
The morphism
$\bar q|_{Y'_{new}}: Y'_{new} \to B$ is finite,
since 
$Y'_{red,new}$ is finite over $Y_{new}$ and 
%$q|_{Y_{new}}: Y_{new} \to B$ is finite and
%$\Pi_{n,new}$
$Y_{new}$ 
is finite over $B$.
Thus $Y'_{new}$ is closed in $\bar X'_{new}$.
Since $Y'_{new}$ is in $(\bar \Pi)^{-1}(X)$
it has the empty intersection with $\bar \Pi^{-1}(X_{new,\infty})$.
Hence
$$X'_{new}= \bar X'_{new} - (\bar \Pi^{-1}(X_{new,\infty})\sqcup Y'_{new}).$$
Both $(\bar \Pi)^{-1}(X_{new,\infty})$ and $Y'_{new}$ are Cartier divisors in
$\bar X'_{new}$. The Cartier divisor $(\bar \Pi)^{-1}(X_{new,\infty})$ is
ample. Thus the Cartier divisor 
$X'_{new,\infty}:= (\bar \Pi)^{-1}(X_{new,\infty})\sqcup Y'_{new}$ is ample as well and 
$(\bar q\circ \bar \Pi)|_{X'_{new,\infty}}: X'_{new,\infty} \to B$ is finite. 
Set $\bar q'=\bar q\circ \bar \Pi$, $q'=\bar q'|_{X'_{new}}$, $q'_{\infty}=\bar q'|_{X'_{new,\infty}}$.
Let $j': X'_{new}\hookrightarrow \bar X'_{new}$
be the open embedding and 
$i': X'_{new,\infty}\hookrightarrow \bar X'_{new}$
be the closed embedding.
The following claim is obvious now.\\
{\it Claim.} The commutative diagram
\begin{equation}
\label{SquareDiagram_new_2}
    \xymatrix{
    X_{new} \ar[drrrr]_{q} && X'_{new}\ar[ll]_{\Pi_{new}} \ar[drr]^{q'}\ar[rr]^{j'}&& \overline X'_{new}\ar[d]_{\overline q'} && X'_{new,\infty}\ar[ll]_{i'}\ar[lld]_{q'_{\infty}} &\\
     &&&& B  &,\\   }
\end{equation}
subjects to the conditions described right below the diagram
(\ref{SquareDiagram_new}).

\end{document}